\documentclass[11pt,twoside]{amsart}
\usepackage{amsmath, amsthm, amscd, amsfonts, amssymb, graphicx, color}
\usepackage[bookmarksnumbered, plainpages]{hyperref}
\usepackage{tikz}
\usetikzlibrary{arrows}
\addtolength{\topmargin}{-1.5cm}
\linespread {1.3}
\textwidth 17cm
\textheight 23cm
\addtolength{\hoffset}{-0.3cm}
\oddsidemargin 0cm
\evensidemargin 0cm
\setcounter{page}{1}
\usepackage{amssymb,tikz}
\usetikzlibrary{arrows,shapes,positioning,decorations.markings}
\usepackage[margin=1.7cm]{geometry}
\numberwithin{equation}{section}

\newtheorem{observation}{Observation}[section]
\newtheorem{lemma}[observation]{Lemma}
\newtheorem{theorem}[observation]{Theorem}
\newtheorem{proposition}[observation]{Proposition}

\newtheorem{example}[observation]{Example}
\numberwithin{equation}{section}

\newtheorem{remark}[observation]{Remark}
\newtheorem{question}[observation]{Question}

\newtheorem{problem}[observation]{Problem}

\newcommand{\cd}{\mathrm{cd}}

\newcommand{\Irr}{\mathrm{Irr}}
\def\Z#1{{\bf Z}{{(#1)}}}

\def\F#1{{\bf F}{{(#1)}}}
\def\cent#1#2{{\bf C}_{{#1}}{{(#2)}}}

\DeclareMathOperator{\diam}{{\rm diam}}

\def\diam{{\rm diam}}

\def\g\cd{{\rm g\cd}}

\def\cent#1#2{{\bf C}_{#1}(#2)}

%
% Relevant symbols
%

\def\Sm0#1{{{\rm GL}}(1,#1)}

\begin{document}

\vspace{1.3 cm}

\title{An overview on the bipartite divisor graph for the set of irreducible character degrees}

\author[R. Hafezieh]{Roghayeh Hafezieh}
\address{Roghayeh Hafezieh, Department of
Mathematics, Gebze Technical University, P.O.Box 41400, Gebze, Turkey}
\email{roghayeh@gtu.edu.tr}
\author[P. Spiga]{Pablo Spiga}
\address{Pablo Spiga, Dipartimento di Matematica Pura e Applicata,
 University of Milano-Bicocca, Via Cozzi 55, 20126 Milano, Italy}
\email{pablo.spiga@unimib.it}

\thanks{{\scriptsize
\hskip -0.4 true cm MSC(2010): Primary 05C25; secondary 05C75.
\newline Keywords: Bipartite divisor graph, character degrees, conjugacy class sizes, prime graph, divisor graph\\
$*$Corresponding author: roghayeh@gtu.edu.tr}}
\maketitle

\begin{abstract}
Let $G$ be a finite group. The bipartite divisor graph $B(G)$ for the set of irreducible complex character degrees $\cd(G)$ is the undirected graph with vertex set consisting of the prime numbers dividing some element of $\cd(G)$ and of the non-identity character degrees in $\cd(G)$, where a prime number $p$ is declared to be adjacent to a character degree $m$ if and only if $p$ divides $m$. The graph $B(G)$ is bipartite and it encodes two of the most widely studied graphs associated to the character degrees of a finite group: the prime graph and the divisor graph on the set of irreducible character degrees.

The scope of this paper is two-fold. We draw some attention to $B(G)$ by outlining the main results that have been proved so far, see for instance~\cite{H,Hregular, mus, moo4, moo5}. In this process we improve some of these results.
\end{abstract}

\vskip 0.2 true cm

\pagestyle{myheadings}
%
%\markboth{\rightline {\sl Int. J. Group Theory x no. x (201x) xx-xx \hskip 7 cm  R. Hafezieh }}
     %    {\leftline{\sl Int. J. Group Theory x no. x (201x) xx-xx \hskip 7 cm  R. Hafezieh}}

\bigskip

\section{Introduction}
An active line of research studies the relations between structural
properties of  groups and sets of invariants. There is a large number of examples of this and we name only four which might be considered as the genesis of this type of investigations; these four examples should also clarify our interest in this paper. Given a finite group $G$, we may associate the prime graph based on the conjugacy class sizes: the vertices are the prime numbers dividing the cardinality of some conjugacy class of $G$ and the prime numbers $p$ and $q$ are declared to be adjacent if and only if $pq$ divides the cardinality of some conjugacy class of $G$. On the set of conjugacy class sizes, we may also associate the divisor graph: the vertices are the  cardinalities of the non-central conjugacy classes of $G$ and the numbers $m$ and $n$ are declared to be adjacent if and only if they are not relatively prime. It is well known that a great deal of information on $G$ is encoded in both of these graphs and this establishes a beautiful flow of information between the algebraic structure of $G$ and the combinatorial properties of the graphs. Two entirely similar constructions can be done replacing the set of conjugacy class sizes with the set of irreducible complex character degrees. Again, the prime graph and the divisor graph on the character degrees encode interesting information about the group. Considering the ``duality'' between conjugacy classes and irreducible characters there are also some remarkable connections among all four of these graphs.

In~\cite{L}, Mark L. Lewis has generalized in a very natural and useful way these graphs. Given a subset $X\subseteq \mathbb{N}\setminus\{0,1\}$, Lewis has considered the {\em prime graph} $\Delta(X)$ and  the {\em divisor graph} $\Gamma(X)$. The vertices of $\Delta(X)$ are the prime numbers dividing some element of $X$ and two distinct prime numbers are declared to be adjacent if and only if their product divides some member of $X$. The vertex set of $\Gamma(X)$ is $X$ and two distinct elements of $X$ are declared to be adjacent if and only if they are not relatively prime. Then, Lewis has shown (for arbitrary sets $X$) some remarkable general connections between $\Delta(X)$ and $\Gamma(X)$. By taking $X$ the set of conjugacy class sizes or the set of irreducible complex characters, one recovers the graphs introduced in the previous paragraph and rediscovers some of their basic relations.

There is a gadget that can be used to study simultaneously $\Delta(X)$ and $\Gamma(X)$. Inspired by the remarkable connections between the common divisor graph $\Gamma(X)$ and the prime degree graph $\Delta(X)$ discussed in ~\cite{L} by Lewis,  Iranmanesh and Praeger~\cite{IP} introduced the notion of {\em bipartite divisor graph} $B(X)$, and proved that most of these connections follow immediately from $B(X)$. The vertex set of $B(X)$ is the disjoint union of the set of prime numbers dividing some element of $X$ and the set $X$ itself, where a prime number $p$ is declared to be adjacent to an element $x$ of $X$ if and only if $p$ divides $x$. For instance, with this new tool, Iranmanesh and Praeger (re)established the links between the number of connected components and the diameters of $\Delta(X)$ and $\Gamma(X)$ simply working with $B(X)$. They were also able to classify the graphs $\Gamma$ with $\Gamma\cong B(X)$, for some set $X$.

Before continuing our discussion, it is very important to observe that $B(X)$ brings  more information than $\Delta(X)$ and $\Gamma(X)$. In other words, the graph $B(X)$ cannot be recovered only from $\Delta(X)$ and $\Gamma(X)$. For instance, when $\Delta(X)\cong \Gamma(X)$ is the complete graph $K_3$ on three vertices, the graph $B(X)$ can be isomorphic to one of the two graphs in Figure~\ref{fig1fig1}.
\begin{figure}[!ht]
\begin{tikzpicture}
  [scale=1,auto=left,every node/.style={circle,fill=blue!20}]
\node[fill=blue] (m1) at (-3,1)  {};
\node[fill=blue] (m2) at (-4,1)  {};
\node[fill=blue] (m3) at (-5,1)  {};
\node (m4) at (-3,-1)  {};
\node (m5) at (-4,-1) {};
\node (m6) at (-5,-1) {};
\node[fill=blue] (n1) at (3,1)  {};
\node[fill=blue] (n2) at (4,1)  {};
\node[fill=blue] (n3) at (2,1)  {};
\node (n4) at (3,-1)  {};
\node (n5) at (4,-1) {};
\node (n6) at (2,-1) {};
\draw (m1) to (m4);
\draw (m1) to (m5);
\draw (m2) to (m4);
\draw (m2) to (m6);
\draw (m3) to (m6);
\draw (m3) to (m5);
\draw (n1) to (n6);
\draw (n1) to (n5);
\draw (n1) to (n4);
\draw (n3) to (n4);
\draw (n2) to (n4);
\end{tikzpicture}
\caption{Examples of two bipartite divisor graphs $B(X)$ giving rise to $\Delta(X)\cong \Gamma(X)\cong K_3$}
\label{fig1fig1}
\end{figure}
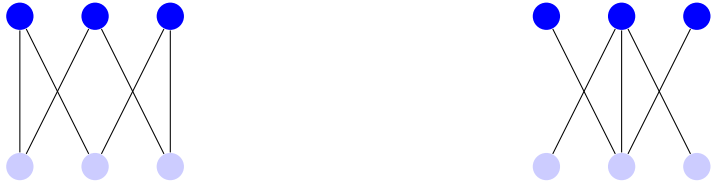 These graphs arise by taking $p,q$ and $r$ three distinct primes and by taking, for instance, $X=\{pq,pr,qr\}$ or $X=\{pqr,p,p^2\}$. (There are other isomorphism classes of $B(X)$ yielding $\Delta(X)\cong \Gamma(X)\cong K_3$, here we just presented two.)
It goes without saying that the extra information brought by  $B(X)$ asks for a finer investigation.

The first application of the bipartite divisor graph in group theory is for the set of conjugacy class sizes, that is, for a given finite group $G$, $X:=\{m\in\mathbb{N}\mid m \textrm{ is the conjugacy class size of some non-central element of }G\}$. (There is no standard notation for this graph and, in this section, we denote it by $B(Cl(G))$.) In ~\cite{BDIP}, the authors considered this graph  and studied various properties. Among other things, they proved that the diameter is at most $6$ and they classified the groups attaining the upper bound. Moreover, when the graph has no cycles, the diameter is actually at most $5$ and they classified the groups for which the graph is a path of length $5$.

The classification of Dolfi and Jabara~\cite{DJ}  of the finite groups with only two non-trivial conjugacy class sizes has spurred more interest in $B(Cl(G))$ and has proved useful in studying $B(Cl(G))$. For instance,  it follows immediately from~\cite{DJ} that there is no group $G$ with $B(Cl(G))\cong C_{4}$, where $C_4$ is the {\em cycle} of length $4$. Therefore, it is interesting to see, if one minded so, whether there exists a group $G$ with $B(Cl(G))$ isomorphic to a cycle. Taeri in~\cite{T} has answered this question and  has proved that $B(Cl(G))$ is a cycle if and only if it is the cycle $C_6$ of length six. Moreover, Taeri has classified the groups $G$ with $B(Cl(G))\cong C_6$; indeed, $G\cong A\times \mathrm{SL}_2(q)$ where $A$ is an abelian group and $q\in \{4,8\}$. Since $C_4$ is also the {\em complete bipartite} graph $K_{2,2}$ and since there is no finite group $G$ with $B(Cl(G))\cong K_{2,2}$, Taeri~\cite[Question~1]{T} has asked whether $B(Cl(G))$ can be isomorphic to some complete bipartite graph. In~\cite{HSpiga}, we answered this question and we constructed infinitely many groups $G$ with  $B(Cl(G))\cong K_{2,5}$. However, as far as we are aware, it is not known for which positive integers $n$ and $m$ there exists a finite group $G$ with $B(Cl(G))\cong K_{n,m}$ (let alone a meaningful classification of the groups $G$ with $B(Cl(G))\cong K_{n,m}$).

We conclude this brief discussion on $B(Cl(G))$ recalling that the first author and Iranmanesh~\cite[Theorem~$4.1$]{HI} have classified the groups $G$ where $B(Cl(G))$ is isomorphic to  a {\em path}. This classification was obtained by investigating the combinatorial properties of the bipartite divisor graphs constructed from the product of subsets of positive integers~\cite{HI}.

\smallskip

In this paper we are concerned with the {\em bipartite divisor graph for the set of irreducible complex character degrees}. Given a finite group $G$,  we let $\Irr(G)$ be the set of the irreducible complex characters of $G$, we let $\cd(G):=\{\chi(1)\mid \chi\in \Irr(G)\}$ and we let $\cd(G)^*:=\cd(G)\setminus\{1\}$. Finally, we let $B(G)$ denote the bipartite divisor graph for the set of integers $\cd(G)^*$. We recall that the vertex set is the disjoint union of the set of prime numbers dividing some element of $\cd(G)^*$ and $\cd(G)^*$ itself, where we declare the prime $p$ to be adjacent to the character degree $m$ if and only if $p$ divides $m$.

The scope of this paper is to outline some main results on $B(G)$. We feel that future research on $B(G)$ might benefit from this because these results are scattered over a number of papers (cf.~\cite{H,Hregular, mus, moo4, moo5}). During this process, we are able to improve some of these results. Moreover, along the way, we leave some problems and questions.

In Section~\ref{sec:special graphical shapes}, we investigate the groups $G$ where $B(G)$ is in a certain class of graphs (paths, union of paths, cycles and complete bipartite). In Section~\ref{sec:reg}, we study the groups $G$ where $B(G)$ is a regular graph, that is, all vertices of $B(G)$ have the same valency. Finally, in Section~\ref{sec:bounded}, we study the groups $G$ with $B(G)$ having at most $6$ vertices. We proceed by discussing the main results that have been proved already and (in several occasions) by improving some of this work.

\subsection{Notation}
All groups and graphs in our paper are finite.
We denote by $\g\cd(m,n)$ the {\em greatest common divisor} of the integers $m$ and $n$. Given a prime number $p$, we let
$n_p$ be the {\em $p$-part} of $n$, that is, the largest power of $p$ dividing the integer $n$. Similarly, we denote by $n_{p'}$ the {\em $p'$-part} of $n$, that is, $n_{p'}:=n/n_p$. We let $\pi(n)$ denote the set of all prime divisors of the natural number $n$.

Given a graph $\mathcal{G}$, we let $V(\mathcal{G})$ denote the {\em vertex set}, we let $E(\mathcal{G})$ denote the {\em edge set}, we let $n(\mathcal{G})$ denote the number of {\em connected components} and we let $o(\mathcal{G})$ denote the cardinality of $V(\mathcal{G})$. The diameter of $\mathcal{G}$, denoted  by $\diam(\mathcal{G})$, is the maximum of the diameters of the connected components of $\mathcal{G}$.  If $\mathcal{G}$ is disconnected and $\mathcal{G}_{1},\ldots,\mathcal{G}_{n}$ are  the connected components of $\mathcal{G}$, then we write $\mathcal{G}:=\mathcal{G}_{1}+\cdots +\mathcal{G}_{n}$. By {\em length of a path} or {\em a cycle}, we mean the number of edges in the path or in the cycle. Also, by $P_{n}$ and $C_{n}$, we mean a path of length $n$ and a cycle of length $n$, respectively. A complete graph on $n$ vertices and a complete bipartite graph on $(m,n)$ vertices are denoted by $K_{n}$ and $K_{m,n}$, respectively.

Given a finite group $G$, we let $\pi(G)$ be the set of all {\em prime divisors of the order} of $G$. As usual, we write $dl(G)$ and $h(G)$ to denote the {\em derived length} and the {\em Fitting height} of $G$, respectively. We denote the {\em first and second Fitting subgroups} of $G$ by $\F G$ and ${\bf F}_2(G)$, respectively.  Other notations throughout the paper are standard and should cause no confusion.

We let $\rho(G)$ be the set of all {\em prime numbers dividing some element of} $\cd(G)^*$. The graphs that we use in this paper are:
\begin{description}
\item[Prime graph $\Delta(G)$]
\begin{align*}
V(\Delta(G))&:=\rho(G),\\
E(\Delta(G))&:=\{\{p,q\}\mid p,q\in \rho(G), p\ne q, pq \textrm{ divides some element of }\cd(G)\};
\end{align*}
\item[Common divisor graph $\Gamma(G)$]
\begin{align*}
V(\Gamma(G))&:=\cd(G)^*,\\
E(\Gamma(G))&:=\{\{m,k\}\mid m,k\in \cd(G)^{*}, m\ne k, \g\cd(m,k)\neq 1\};
\end{align*}
\item[Bipartite divisor graph $B(G)$]
\begin{align*}
V(B(G))&:=\rho(G)\amalg \cd(G)^{*}\, (\textrm{disjoint union}),\\
E(B(G))&:=\{\{p,m\}\mid p\in\rho(G),m\in \cd(G)^{*}, p\textrm{ divides }m\}.
\end{align*}
\end{description}

\subsection{Notation in the figures of this paper}
We have consistently drawn all figures of this paper so that the vertices in the lower part of the picture are light blue and are the elements of $\rho(G)$ and the vertices in the upper part of the picture are blue and are the elements of $\cd(G)^{*}$.

\section{Groups whose bipartite divisor graphs have special shapes}
\label{sec:special graphical shapes}

One of the main questions that naturally arises in this area of research is classifying those groups whose bipartite divisor graphs have special shapes. In ~\cite{H}, the first author of this paper discussed the cases where the bipartite divisor graph for the set of irreducible character degrees is
\begin{itemize}
\item a path (see Theorem~\ref{thm:990}),
\item a union of paths for non-solvable groups (see Theorem~\ref{thm:99}), or
\item a cycle (see Theorems~\ref{thm:55}).
\end{itemize}
In this section, one the one hand, we review and we improve the results in~\cite{H}, on the other hand, we discuss the algebraic structure of a solvable group whose bipartite divisor graph is a union of paths.

In our analysis we use the classification into {\em six types} of Mark Lewis~\cite{ML2} of the solvable groups whose degree graph is disconnected. Lewis has named these classes Type~1--6 and for each of these types he has given a detailed description in~\cite[Lemmas~$3.1$--$3.6$]{ML2}. Except for the proof of Theorem~\ref{thm:P4}, we do not need this full classification here. We assume that the reader is broadly familiar with these types, however we highlight below the following properties tailored to our needs.
 \begin{remark}~\label{rem:20}{\rm
  Let $X$ be a solvable group with $\Delta(X)$ disconnected. Then $\Delta(X)$  has two connected components. Moreover, the following hold.
 \begin{itemize}
 \item If $X$ is of type $1$, $2$, $3$, or $5$, then at least one of the connected components of $\Delta(X)$ has cardinality $1$. Thus, if each connected component of $\Delta(X)$ has at least two vertices, then $X$ is a group of type $4$ or $6$. The converse is not true: there are some groups of type $4$ having prime graph consisting of two isolated vertices. For example, the group
\[\texttt{SmallGroup(168,43)}\cong \mathrm{A}\Gamma\mathrm{L}(1,8)\]
 in the ``SmallGroup'' library in GAP~\cite{GAP4} is of type $4$ and its prime graph has vertex set $\{3,7\}$. Moreover, if $X$ is a group of type $6$, then $X$ has a normal Sylow $p$-subgroup and $\Delta(X)$ has a connected component consisting of $\pi([{\bf F}_{2}(X):\F X])\cup\{p\}$ and this set has cardinality greater than $1$.
\item If $X$ is a group of type $1$, then $h(X)=2$, while for all the other types $h(X)\ge 3$, see~\cite[Lemma~4.1]{ML2}.
\item The group $X$ has a normal non-abelian Sylow subgroup if and only if $X$ is of type $1$ or $6$.
\item If $X$ is a group of type $5$, then $\{1,2,2^{a}+1\}\subseteq \cd(X)$, for some positive integer $a$.
\item If $X$ is a group of type $2$, then $\cd(X)=\{1,2,3,8\}$ and if $X$ is of type $3$, then $\cd(X)=\{1,2,3,4,8,16\}$.
 \end{itemize}}
\end{remark}

\subsection{The case where the bipartite divisor graph is a path}
Let $G$ be a finite group. In~\cite{H} it is proved that $B(G)$ has diameter at most seven and this upper bound is the best possible. In the special case that $B(G)$ is a path of length $n$, ~\cite[Proposition 2]{H} improves this bound by showing  that $n\leq 6$; moreover, $G$ is solvable and $dl(G)\leq 5$. The following theorem gives a more detailed description of $G$.
\begin{theorem}[{{See,~\cite{H}}}]~\label{thm:990}
Let $G$ be a finite group with $B(G)$ a path of length $n$. Then,  one of the following occurs:
\begin{itemize}
\item[(i)] $G$ has an abelian normal subgroup $N$ such that $\cd(G)=\{1,[G : N]\}$ and $G/N$ is abelian. Furthermore, $n\in\{1,2\}$.
\item[(ii)] There exist normal subgroups $N$ and $K$ of $G$ and a prime number $p$ with the following properties:
  \begin{itemize}
    \item[(iia)] $G/N$ is abelian;
    \item[(iib)] $\pi(G/K)\subseteq\rho(G)$;
    \item[(iic)] either $p$ divides all the non-trivial irreducible character degrees of $N$  (this implies that $N$ has a normal $p$-complement), or $\cd(N)=\{1, l, k,h/m\}$, where $\cd(G)=\{1, m, h, l, k\}$.
  \end{itemize}
  Furthermore, $n\in\{4,5,6\}$.
\item[(iii)] $\cd(G)=\{1,p^{\alpha},q^{\beta},p^{\alpha}q^{\beta}\}$, where $p$ and $q$ are distinct primes and $\alpha,\beta$ are positive integers. Thus $n=4$.
\item[(iv)] There exists a prime $s$ such that $G$ has a normal $s$-complement $H$. Either $H$ is abelian and $n\in\{1,2\}$ or  $H$ is non-abelian and
    \begin{itemize}
    \item[(iva)] $\cd(G)=\{1,h,hl\}$, for some positive integers $h$ and $l$, and $n=3$;
    \item[(ivb)] $n=4$ and $G/H$ is abelian. Either $\cd(H)=\{[H:\F H]\}\cup \cd(\F H)$ or $\cd(H)=\{1, [{\bf F}_{2}(H):\F H], [H:\F H]\}$. Also $[G:\F G]\in \cd(G)$ and $\cd(\F G)=\{1,h_{s^{'}}\}$, where $[G:\F G]\neq h\in \cd(G)$;
     \item[(ivc)] $n=3$, $G/H$ is abelian, $h:=[G:\F G]\in \cd(G)$, $\F G=P\times A$, where $P$ is a $p$-group for some prime number  $p$, $A\leq \Z G$, $\cd(G)=\cd(G/A)$, and $\cd(P)=\{1, m_{s^{'}}\}$ for $h\neq m\in \cd(G)$.% and $\cd(G)=\{1, \chi(1)=[\chi_{H},\theta][G:I_{G}(\theta)]\theta(1), q^{\alpha}\}$, where $\theta\in Irr(H)$ is an irreducible constituent of $\chi\in Irr(G)$. Furthermore, $n=3$.
    \end{itemize}
\end{itemize}
\end{theorem}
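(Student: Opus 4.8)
The plan is to argue by the combinatorial shape of the path, translating the structure of $B(G)$ into the stated conclusions with a small toolkit: the combinatorics of bipartite paths, P\'alfy's condition on the degree graph of a solvable group, the It\^o--Michler theorem, Thompson's normal $p$-complement theorem, Gallagher's theorem, and the classical descriptions of groups with $|\cd(G)|\le 3$. By~\cite[Proposition~2]{H} we already know $G$ is solvable, $dl(G)\le 5$ and $n\le 6$, so from the outset only finitely many configurations are in play.

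First I would record the purely combinatorial reductions. A path of length $n$ has $n+1$ vertices, its two endpoints of valency $1$ and every internal vertex of valency $2$; since $V(B(G))=\rho(G)\amalg\cd(G)^{*}$ is the bipartition, $\bigl||\rho(G)|-|\cd(G)^{*}|\bigr|\le 1$, and the parity of $n$ determines whether the endpoints are primes or character degrees. A one-line check shows that, when $B(G)$ is a path, so are $\Delta(G)$ and $\Gamma(G)$ (two primes being $\Delta(G)$-adjacent precisely when consecutive along $B(G)$, and similarly for degrees in $\Gamma(G)$); P\'alfy's condition then forces $|\rho(G)|\le 4$. Two dictionary entries do most of the work: a character degree of valency $1$ is a prime power, a prime of valency $1$ divides a unique element of $\cd(G)^{*}$, and — since vertices of a path have valency at most $2$ — a prime divides \emph{every} nontrivial character degree only when $|\cd(G)^{*}|\le 2$.

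Next I would dispose of the short paths, which produce cases (i) and (iv). If $|\cd(G)^{*}|=1$, write $\cd(G)=\{1,m\}$: when $m$ is not a prime power the classical structure of groups with a single nontrivial degree gives an abelian normal subgroup $N$ of index $m$, and then $G/N$ is abelian since $\cd(G/N)\subseteq\{1,m\}$ while $m=|G/N|$, which is (i); when $m$ is a prime power, $\rho(G)$ is a single prime. If $|\rho(G)|=1$, the unique prime $p$ divides every nontrivial degree, so Thompson's theorem produces a normal $p$-complement $H$, abelian by Clifford theory (degrees in $\Irr(H)$ are prime to $p$ but divide the $p$-power degrees of $G$), which is (iv) with $H$ abelian; these account for all configurations with $n\le 2$. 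For $n=3$ the endpoints are a prime and a degree, so $\cd(G)^{*}=\{p_{1}^{a}p_{2}^{b},\,p_{2}^{c}\}$ with $p_{2}$ the internal prime; since $p_{2}$ divides both degrees, Thompson produces a normal $p_{2}$-complement $H$, and analysing $\cd(H)$ (confined to powers of $p_{1}$) together with a Gallagher analysis of $G/\F G$, with $\F G=P\times A$, $A\le\zent G$, $P$ a $p_{1}$-group, yields (iva) and (ivc). The shape $n=4$ with primes at both endpoints is handled identically — the internal prime again divides both degrees, so $G$ has a normal complement $H$, with $G/H$ abelian because a degree of $G/H$ would be simultaneously a degree of $G$ and a prime power, hence trivial — which leads to (ivb) after an analysis of $\F H$ and ${\bf F}_{2}(H)$.

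In the remaining shapes — $n=4$ with degrees at the endpoints, and $n=5,6$ — no prime divides every nontrivial degree, so there is no normal complement to exploit directly, and one aims at (ii) or (iii). For $n=4$ with degree-endpoints, $\cd(G)^{*}=\{p^{a},p^{b}q^{d},q^{c}\}$, and either the arithmetic sharpens to $\cd(G)=\{1,p^{\alpha},q^{\beta},p^{\alpha}q^{\beta}\}$ (case (iii), realized for instance by a suitable direct product) or $G$ satisfies the generic description (ii). For (ii) I would take $N$ to be a normal subgroup with $G/N$ abelian (for example $N=G'$), giving (iia), and let $K$ be the normal Hall $\rho(G)'$-subgroup of $G$ — which exists with $\pi(G/K)\subseteq\rho(G)$ by It\^o--Michler applied to the primes outside $\rho(G)$ — giving (iib). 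The hard part will be (iic): one uses Gallagher's theorem to express the degrees of $N$ in terms of those of $G$ and of $[G:N]$, exploiting the path-like divisibility pattern of $\cd(G)$, to conclude that either $\rho(N)$ collapses to a single prime $p$ — whence $N$ has a normal $p$-complement by Thompson applied inside $N$ — or $\cd(N)$ is precisely $\{1,l,k,h/m\}$ with $\cd(G)=\{1,m,h,l,k\}$. Distinguishing these alternatives amounts to tracking, for each of the (at most four) degrees of $G$, whether the corresponding irreducible character restricts irreducibly to $N$ or ramifies, and matching this with the exact position of the pendant vertices of $B(G)$; it is here that the full bipartite data of $B(G)$ genuinely enters, beyond what $\Delta(G)$ and $\Gamma(G)$ encode, and the lengths $n=5,6$ are variations of this bookkeeping with one further prime (and, for $n=6$ with degree-endpoints, one further degree) to follow.
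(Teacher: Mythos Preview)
The paper does not give its own proof of this theorem: it is quoted verbatim from~\cite{H} with the label ``See,~\cite{H}}'', and no argument is supplied here. So there is no proof in the present paper to compare your proposal against; what follows is an assessment of your sketch on its own terms.

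Your combinatorial setup is sound and is exactly how one would begin: the bipartition of a path forces $\bigl||\rho(G)|-|\cd(G)^{*}|\bigr|\le 1$, the parity of $n$ determines which colour sits at the endpoints, both $\Delta(G)$ and $\Gamma(G)$ inherit the path structure, and P\'alfy's three-primes condition rules out $|\rho(G)|\ge 5$ (take the two endpoints of $\Delta(G)$ together with a vertex at distance $\ge 2$ from both). Your treatment of the short cases $n\le 2$ via Thompson and the two-degree classification is correct, and your choice $K=$ normal Hall $\rho(G)'$-subgroup (via It\^o--Michler applied prime by prime) does give~(iib).

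The genuine gap is in the structural conclusions for $n\ge 3$. For (ivb) and (ivc) you defer everything to ``an analysis of $\F H$ and ${\bf F}_{2}(H)$'' and ``a Gallagher analysis'', but the specific claims --- that $[G:\F G]\in\cd(G)$, that $\cd(\F G)=\{1,h_{s'}\}$ for the \emph{other} degree $h$, that $\cd(H)$ takes one of two precise shapes --- are not things that fall out automatically; they require the detailed Clifford-theoretic arguments that form the bulk of the proof in~\cite{H}. More seriously, for (iic) you take $N=G'$ ``for example'', but nothing in your outline explains why $\cd(G')$ should be exactly $\{1,l,k,h/m\}$ in the non-collapsing branch, nor why the collapsing branch $|\rho(N)|=1$ is the only alternative. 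The choice of $N$ in~\cite{H} is not arbitrary, and the dichotomy in (iic) is the output of a case analysis tied to how the specific degrees of $G$ restrict to $N$; asserting that ``Gallagher's theorem'' and ``tracking restriction versus ramification'' will do this is naming the tools without carrying out the argument. Likewise, your handling of the $n=4$ degree-endpoint case does not explain what distinguishes the groups landing in (iii) from those landing in (ii) --- you would need to show that when $\cd(G)\ne\{1,p^{\alpha},q^{\beta},p^{\alpha}q^{\beta}\}$ the hypotheses of (ii), including (iic), are actually met.
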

In Theorem~\ref{thm:990}, the description of the groups $G$ with $B(G)\cong P_n$ and $n\le 3$ is rather good. (For instance, when $n=1$, or $n=2$ and $|\rho(G)|=2$, the group $G$ has a unique non-trivial character degree. These groups are classified by the work in~\cite[Chapter~12]{IS} and~\cite{BH}. Similarly, when $|\cd(G)^*|=2$, a great deal of information is in~\cite{N}.) However, when $n\ge 4$, the information on $G$ is not yet very satisfactory.

For the time being, we focus on the case $n=4$, that is, $B(G)\cong P_{4}$. This situation may arise from two different cases: either $|\rho(G)|=2$ or $|\rho(G)|=3$. Both cases are possible as it is shown in Table~\ref{tab:p4}. Examples of the first case are rather elementary, it suffices to take the direct product $G:=A\times B$, where $A$ and $B$ are both groups having a unique non-trivial character degree $p_a^\alpha$ and $p_b^\beta$, respectively, where $p_a$ and $p_b$ are distinct prime numbers. It is rather more intriguing to construct examples of the second kind, as witnessed by the fact that the smallest finite group $G$ with $B(G)\cong P_4$ and $\rho(G)=3$ has cardinality $960$. Therefore, we now look more closely to this case.

\begin{table}[ht]
\caption{Examples of $B(G)=P_{4}$}
\centering
\begin{tabular}{|c|c|c|}
\hline $G$ & $|\rho(G)|$ & $\cd(G)$ \\
\hline $\mathrm{Sym}(3)\times \mathrm{Alt}(4)$ & $2$ & $\{1,2,3,6\}$ \\ \hline $\texttt{SmallGroup(960,5748)}$ & $3$ & $\{1,12,15\}$ \\
\hline
\end{tabular}
 \label{tab:p4}
\end{table}

Assume that $G$ is a finite group with $B(G)\cong P_{4}$ and $|\rho(G)|=3$. Let $\rho(G):=\{p,q,r\}$ and let $\alpha,\beta,\gamma,\delta$ be positive integers such that
$\cd(G):=\{1,p^\alpha q^\beta,r^\gamma q^\delta\}$. Since every non-linear character degree of $G$ is divisible by the prime $q$, we deduce (from a celebrated theorem of Thompson~\cite[(12.2)]{IS}) that $G$ has  a normal $q$-complement $L$. Let $Q$ be a Sylow $q$-subgroup of $G$. Thus $G$ equals the semidirect product $L\rtimes Q$. %Since $\Irr(Q)\subseteq \Irr(G)$ and since $Q$ is a $q$-group, we deduce $\cd(Q)\subseteq \cd(G)$ and hence $\cd(Q)=1$, that is, $Q$ is abelian and $G'\le L$.
Let $\theta\in \Irr(L)$ and let $\chi\in\Irr(G)$ with $\langle \chi_L,\theta\rangle\ne 0$. Then, from Clifford theory, $\chi_L=e(\theta_1+\cdots+\theta_t)$, where $\theta_1,\ldots,\theta_t$ are the conjugates of $\theta$ under $G$. As $\chi(1)\in \{1,p^\alpha q^\beta,r^\gamma q^\delta\}$ and $e,t$ are divisors of $|G:L|=|Q|$ by~\cite[(11.29)]{IS}, we deduce
\begin{equation}\label{eq:2}
\cd(L)=\{1,p^\alpha,r^\gamma\}.
\end{equation}
Therefore $\Delta(L)$ is a disconnected graph with two isolated vertices. As $L$ is solvable, Remark~\ref{rem:20} implies that $L$ is a group of type $1$, $4$ or $5$ in the sense of Lewis. In particular, if $L$ has a non-abelian normal Sylow subgroup, then $L$ is of type $1$. An example of this case is in Table~\ref{tab:p4}, which we now discuss.
\begin{example}~\label{exam: P4}{\em
Let  $G=\texttt{SmallGroup}(960,5748)$. Then $\cd(G)=\{1,12,15\}$ and
 $$G\cong  (\mathbb{Z}_{2}\times\mathbb{Z}_{2}).(\mathbb{Z}_{2}\times\mathbb{Z}_{2}\times\mathbb{Z}_{2}\times\mathbb{Z}_{2})\rtimes\mathbb{Z}_{15}$$ Indeed, $G$ has a normal Sylow $2$-subgroup $P$ with $|P|=2^6=64$, $\Z P=P'=\Phi(P)$ and $|\Z P|=4$.

Using the notation that we have established above, $q=3$, $L$ is the normal $3$-complement of $G$ and $\cd(L)=\{1,4,5\}$. As $\Delta(L)\cong K_{1}+K_{1}$ and $L$ has a normal non-abelian Sylow subgroup, Remark~\ref{rem:20} implies that $L$ is of type $1$.}
 \end{example}

Motivated by Example~\ref{exam: P4}, the following theorem verifies that  $L$ is always a group of type $1$ and  explains in part the elusiveness of the groups $G$ with $B(G)\cong P_4$ and $|\rho(G)|=2$.
 \begin{theorem}~\label{thm:P4}
Suppose that $\cd(G_0)=\{1,p^\alpha q^\beta,r^\gamma q^\delta\}$. Then there exists $A\le \Z {G_0}$ such that for the factor group $G:=G_0/A$ the following holds (replacing $r$ with $p$ if necessary):
\begin{itemize}
\item[(i)] $G$ contains a normal Sylow $p$-subgroup $P$;
\item[(ii)]$P=G'=\F G$;
\item[(iii)]$P$ is semiextraspecial and $G/P$ is cyclic ($P$ is called semiextraspecial if, for all maximal subgroups $N$ of $\Z P$, the factor group $P/N$ is extraspecial);
\item[(iv)]$G/P'$ is a Frobenius group with Frobenius kernel $P/P'$;
\item[(v)]$P<\cent G{P'}<G$;
\item[(vi)]$G/\cent G{P'}$ acts as a Frobenius group on $P'$;
\item[(vii)]$\cd(G)=\cd(G_0)$, where $p^{2\alpha}=|P:P'|$ and $q^{\beta}=|G:\cent G {P'}|$;
\item[(viii)]if $L$ is the $q$-complement of $G$, then $L$ is of type $1$ in Lewis sense.
\end{itemize}
\end{theorem}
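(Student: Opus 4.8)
The plan is to strip off a normal $q$-complement, locate it in Lewis' six-type classification, and then rebuild $G_0$ modulo a central subgroup.

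Both non-trivial members of $\cd(G_0)$ are divisible by $q$, so Thompson's theorem~\cite[(12.2)]{IS} gives a normal $q$-complement $L_0\trianglelefteq G_0$; fixing a Sylow $q$-subgroup $Q$ we have $G_0=L_0\rtimes Q$, and since $\cd(Q)=\cd(G_0/L_0)\subseteq\cd(G_0)$ consists of powers of $q$ we get $\cd(Q)=\{1\}$, i.e. $Q$ is abelian. The Clifford-theoretic computation leading to~\eqref{eq:2} applies verbatim (a character of $G_0$ lying over $\theta\in\Irr(L_0)$ has degree $\theta(1)$ times a divisor of $|Q|$) and yields $\cd(L_0)=\{1,p^{\alpha},r^{\gamma}\}$. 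Hence $\Delta(L_0)$ is the edgeless graph on $\{p,r\}$, so in particular it is disconnected; thus $L_0$ is solvable (a non-solvable group has at least four irreducible character degrees) and, by Remark~\ref{rem:20}, $L_0$ is of Lewis type $1$, $4$ or $5$.

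The heart of the proof is to show that $L_0$ is of type $1$. This does not follow from $\cd(L_0)$ alone --- for example $\mathrm{A}\Gamma\mathrm{L}(1,8)$ is of type $4$ with $\cd=\{1,3,7\}$ of exactly this shape --- and the extra ingredient is that in $G_0=L_0\rtimes Q$ \emph{both} non-trivial degrees of $L_0$ must pick up a positive power of $q$, since $\beta,\delta\ge1$. Suppose $L_0$ is of type $4$ or $5$. By~\cite[Lemmas~3.4 and~3.5]{ML2} its Fitting subgroup $\F{L_0}$ is then a $t$-group that is \emph{not} a Sylow subgroup of $L_0$, and the two non-linear degrees of $L_0$ are controlled by the two successive chief-factor actions of ${\bf F}_2(L_0)/\F{L_0}$ on $\F{L_0}/\Phi(\F{L_0})$ and of $L_0/{\bf F}_2(L_0)$ on ${\bf F}_2(L_0)/\F{L_0}$. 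Analysing, through Clifford theory, the action of the abelian $q$-group $Q$ on $\Irr(L_0)$ --- and using that $Q$ normalises the characteristic subgroups $\F{L_0}$, ${\bf F}_2(L_0)$, $\Phi(\F{L_0})$ --- one shows that $Q$ cannot simultaneously fuse the degree-$p^{\alpha}$ and the degree-$r^{\gamma}$ characters of $L_0$: either $Q$ centralises $L_0$ (so $G_0=L_0\times Q$ and $\cd(G_0)=\{1,p^{\alpha},r^{\gamma}\}$), or $\cd(G_0)$ contains a non-trivial degree that is a power of $p$ or of $r$; both possibilities contradict the hypothesis. This elimination of types $4$ and $5$ is precisely where the full classification of~\cite{ML2} is needed, and it is the main obstacle. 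Types $2$ and $3$ are excluded because their degree sets have four and six elements (Remark~\ref{rem:20}), and type $6$ because a type-$6$ group has a connected component of $\Delta$ of cardinality greater than $1$ (Remark~\ref{rem:20}); hence $L_0$ is of type $1$.

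It remains to read off the structure. Relabelling $p$ and $r$ so that the Sylow-normal prime is $p$, \cite[Lemma~3.1]{ML2} gives that $L_0$ has a normal non-abelian Sylow $p$-subgroup $P_0=\F{L_0}$ with $L_0/P_0$ abelian of order $r^{\gamma}$; comparing with $\cd(L_0)=\{1,p^{\alpha},r^{\gamma}\}$ one finds that $P_0$ is semiextraspecial with $|P_0:P_0'|=p^{2\alpha}$, that $P_0'=\Z{P_0}$ lies in $\Z{L_0}$, that $L_0/P_0'$ is a Frobenius group with kernel $P_0/P_0'$, and that $P_0/P_0'$ and $P_0'$ are elementary abelian. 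Now reinsert $Q$: since $P_0$ is the characteristic Sylow $p$-subgroup of $L_0$ it is normal in $G_0$, and analysing the abelian $q$-action of $Q$ one checks that a subgroup of $Q$ of order $q^{\beta}$ acts as a Frobenius group on $P_0'$ while the remainder of $G_0/P_0$ centralises $P_0'$, and that $G_0/P_0$ is cyclic modulo a central subgroup. Letting $A\le\Z{G_0}$ be that central part and passing to $G:=G_0/A$, the Sylow $p$-subgroup $P$ of $G$ satisfies $P=G'=\F G$, $P$ is semiextraspecial, $G/P$ is cyclic, $G/P'$ is a Frobenius group with kernel $P/P'$, $P<\cent G{P'}<G$ with $G/\cent G{P'}$ acting as a Frobenius group of order $q^{\beta}$ on $P'$, and $\cd(G)=\cd(G_0)$ with $p^{2\alpha}=|P:P'|$; this yields (i)--(vii). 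Finally (viii): the $q$-complement of $G$ equals $P$ extended by a cyclic $p'$-group, hence has a normal non-abelian Sylow $p$-subgroup, so by Remark~\ref{rem:20} it is of type $1$ or $6$; it is not of type $6$, because its degree set $\{1,p^{\alpha},r^{\gamma}\}$ makes both components of its degree graph singletons, so it is of type $1$.
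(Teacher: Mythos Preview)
The paper's proof is a single sentence: ``The proof follows applying Theorem~5.6 in~\cite{N} in our context.'' Noritzsch's Theorem~5.6 is a structure theorem for groups with exactly three character degrees of the shape $\{1,m,n\}$ with $\gcd(m,n)>1$, and it delivers (i)--(vii) directly; part~(viii) then drops out from Remark~\ref{rem:20} exactly as you argue in your last paragraph.

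Your route---pass to the normal $q$-complement $L_0$, locate $L_0$ in Lewis' six-type list, rule out all types but~$1$, and then rebuild $G_0$---is genuinely different, but it has two substantive gaps.

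First, the elimination of type~$4$ for $L_0$ is not carried out. You write that ``analysing, through Clifford theory, the action of the abelian $q$-group $Q$ on $\Irr(L_0)$ \ldots one shows that $Q$ cannot simultaneously fuse the degree-$p^{\alpha}$ and the degree-$r^{\gamma}$ characters of $L_0$,'' but no such analysis is given, and this is exactly the step you flag as ``the main obstacle.'' Note that the paper itself, in Theorem~\ref{thm: reg}, only narrows $L_0$ with $B(L_0)\cong K_2+K_2$ down to Lewis types~$1$ or~$4$; it never eliminates type~$4$ for $L_0$ in isolation, and there is no reason to expect a short argument here. (Type~$5$ can be excluded by the argument in the proof of Theorem~\ref{thm: reg}, which forces $\cd(L_0)=\{1,2,3\}$ and then invokes~\cite[Theorem~3.5]{N}; but you do not reproduce that argument either.)

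Second, and more seriously, even granting that $L_0$ is of type~$1$, your derivation of the fine structure in (i)--(vii) is a string of assertions (``one finds that $P_0$ is semiextraspecial \ldots'', ``one checks that a subgroup of $Q$ of order $q^\beta$ acts as a Frobenius group on $P_0'$ \ldots''). Lewis type~$1$ only says that $L_0$ has a normal non-abelian Sylow subgroup with abelian quotient; extracting semiextraspeciality of $P$, the two Frobenius actions, the identification $p^{2\alpha}=|P:P'|$, $q^\beta=|G:\cent G{P'}|$, the cyclicity of $G/P$, and the correct choice of the central subgroup $A$ is precisely the content of Noritzsch's Theorem~5.6. In other words, your outline ends up needing~\cite[Theorem~5.6]{N} (or an equivalent amount of work) at the very point where the paper simply cites it.
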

\begin{proof}
The proof follows applying Theorem~$5.6$ in~\cite{N} in our context.
\end{proof}

Given a finite group $G$, it is easy verify that, if $B(G)$ is a path, then both $\Delta(G)$ and $\Gamma(G)$ are paths. In particular, if $B(G)\cong  P_{6}$, then $\Gamma(G)\cong  P_{3}$ and hence $\Gamma(G)$ has diameter three. The converse is not always true, as we show in the following example.
\begin{example}{\em
We recall a construction from~\cite{shafiei}.
 Let $p$, $q$, and $r$ be three, not necessarily distinct, primes with $q\neq r$ such that $q$ and $r$ do not divide $p^{qr}-1$, let $V$ be the additive group of the field $\mathbb{F}_{p^{qr}}$ of order $p^{qr}$, let $S$ be the Galois group of the field extension $\mathbb{F}_{p^{qr}}/\mathbb{F}_p$, let $C$ be the cyclic subgroup of  order $\frac{p^{qr}-1}{p^{r}-1}$ of the multiplicative group of $\mathbb{F}_{p^{qr}}^*$ and  let $G:=(V\rtimes C)\rtimes S$. Then $$\cd(G)=\left\{1,q,qr,\frac{p^{qr}-1}{p^{r}-1},r\frac{p^{qr}-1}{p^{r}-1}\right\}.$$ Thus $B(G)$ is a path if and only if $\frac{p^{qr}-1}{p^{r}-1}$ is a prime power. Lemma~$3.1$ in~\cite{shafiei} yields that, if $\frac{p^{qr}-1}{p^{r}-1}$ is a prime power, then either $r$ is a power of $q$, or $p=q=2$ and $r=3$.

The first case is not possible because by hypothesis $r$ and $q$ are distinct primes. In the second case, $r=3$ is a divisor of $p^{qr}-1=63$, which contradicts our hypothesis. Thus $\frac{p^{qr}-1}{p^{r}-1}$ is not a prime power and $B(G)$ is not a path.}
\end{example}

Next, we prove the existence of  groups $G$ with $B(G)\cong  P_{n}$, for $n\in\{5,6\}$. This  answers  Question~1 in~\cite{H}.
\begin{example}\label{ex:5}{\rm
When $n=5$, we give infinitely many groups $G$ with $B(G)\cong P_5$. Our example is due to P\'{e}ter P\'{a}l P\'{a}lfy, and we gratefully acknowledge his contribution.

Let $r$ be an odd prime, let $p$ be a prime with $p\equiv 1\pmod {2r}$ and let $P$ be an extraspecial group of order $p^{3}$ with exponent $p$. (Observe that the existence of infinitely many primes $p$ follows, for instance, from Dirichlet theorem on primes in arithmetic progression.) The group $P$ has the following presentation:
$$P=\langle x_{1},x_{2}\mid x_{1}^{p}=x_{2}^{p}=[x_1,x_2]^p=[x_1,[x_1,x_2]]=[x_2,[x_1,x_2]]=1\rangle.$$
Since $2r$ divides $p-1$, there exists $\alpha\in \mathbb{Z}$ such that $\alpha $ has order $2r$ in  the multiplicative group of $(\mathbb{Z}/p\mathbb{Z})^*$. Set $\beta:=\alpha^{r-1}$ and observe that $\beta$ has order $r$ in the multiplicative group $(\mathbb{Z}/p\mathbb{Z})^*$ because $\gcd(2r,r-1)=2$.  From the presentation of $P$, we see that the mapping
$$x_{1}\mapsto x_{1}^{\alpha},\quad x_{2}\mapsto x_{2}^{\beta}$$
defined on the generators of $P$ extends to an automorphism of $P$ of order $2r$, which we denote by $y$.
Set $G:=P\rtimes \langle y\rangle$ with respect to the above action. We have
$$[x_1,x_2]^y=[x_1^y,x_2^y]=[x_1^\alpha,x_2^\beta]=[x_1,x_2]^{\alpha\beta}=[x_1,x_2]^{\alpha\alpha^{r-1}}=[x_1,x_2]^{\alpha^r}.$$ Since $\alpha^r$ has order $2$ modulo $p$, we obtain $[x_1,x_2]^y=[x_1,x_2]^{-1}$ and hence $y$ acts on $\Z P$ by inverting its elements. From this, it is easy to see that $\cd(G)=\{1,r,2r,2p\}$, so $B(G)$ is a path of length five.
}
\end{example}

\begin{example}\label{ex:6}{\rm
When $n=6$, our construction is based on some preliminary theoretical work and then its implementation in a computer. Here, we report only the outcome of our computations because we are not able to give a general construction.

Let $P$ be the group $\texttt{SmallGroup}(256,3679)$. %$3679^{\mathrm{th}}$ group of order $256$ in the SmallGroup database in GAP.
One can check that the group $P$ has nilpotency class $3$, $|P:\gamma_2(P)|=|\gamma_2(P):\gamma_3(P)|=2^3=8$ and $|\gamma_3(P)|=4$. Moreover, each section of the lower central series has exponent $2$. Let $T$ be a  Hall $2'$-subgroup in the automorphism group of $P$. A simple computation yields that $T$ is non-abelian and has cardinality $21$. Let $G$ be the semidirect product $P\rtimes T$. Then $G$ is a solvable group  having cardinality $5\,376=2^8\cdot 3\cdot 7$ and a computation yields $\cd(G)=\{1,3,7,14,24\}.$ Therefore $B(G)$ is a path of length $6$. Furthermore, considering $G$ as a permutation group of degree $32$, it is generated by $\alpha_{1}$, $\alpha_{2}$, and $\alpha_{3}$, where
\begin{align*}
&\text{\small$\alpha_{1}:=(1,2)(3,14,9,20)(4,15,19,27)(5,7)(6,8)(10,21,18,26)(11,22,12,23)(13,16)(17,31,25,29)(24,32,28,30)$},\\
&\text{\small$\alpha_{2}:=(2,27,25,19,21,10,31)(3,29,8,22,17,11,14)(4,20,9,30,16,15,24)(7,23,28,12,26,18,32)$},\\
&\text{\small$\alpha_{3}:=(3,12,30)(4,29,18)(5,13,6)(7,16,8)(9,11,32)(10,19,31)(14,23,24)(15,17,26)(20,22,28)(21,27,25)$}.
\end{align*}
This is the smallest group we managed to construct having $B(G)$ a path of length $6$.}
\end{example}

Now that we have established the existence of a group $G$ with $B(G)\cong P_n$ for $n\in\{5,6\}$,  it would be interesting  to give a classification of this family of groups.

\begin{problem}Give structural information on the finite groups $G$ with $B(G)\cong P_{n}$ for $n\in\{5,6\}$.
\end{problem}

%We still do not know any example where $B(G)\cong  P_{5}$, so the following question is still open.
%\begin{question}\label{question:1}
%Is there any finite group $G$ whose $B(G)\cong  P_{5}$?
%\end{question}
\vskip 0.4 true cm

\subsection{Union of paths}\label{unionpaths}
As it is explained in~\cite[Example 3.4]{N}, given a prime $p$ and two positive integers $k$ and $m$ with $k$ dividing $p^m\pm 1$, there exists a solvable group $PH$ such that $\cd(PH)=\{1,k,p^{m}\}$. (More information on the structure of $G$ is in~\cite{N}, but this is of no concern here.) In particular, when $|\pi(k)|=2$, $B(PH)$ is the first graph in Figure~$\ref{fig: 5}$, and hence $B(PH)$ is a union of paths.
On the other hand, $\cd(M_{10})=\{1,9,10,16\}$ and $\cd(\mathrm{PSL}_2(25))=\{1,13,24,25,26\}$ and hence $B(M_{10})$ and $B(\mathrm{PSL}_2(25))$ have two connected components which are both paths: $B(M(10))=P_2+P_3$ and $B(\mathrm{PSL}_2(25))=P_2+P_5$.

These examples stimulate the curiosity of investigating those finite groups $G$ such that  each connected component of $B(G)$ is a path. For this aim, first we let $G$ be a finite non-solvable group. In the following theorem, we refine the main result of~\cite{H}, cf.~\cite[Theorem~6]{H}.

% whose $B(G)$ is a union of paths. According to the points that we discussed before Theorem~\ref{thm:990}, non-solvability of $G$ implies that $B(G)$ is disconnected. By ~\cite[Theorem 6.4]{L} we know that $\Delta(G)$ has at most three connected components. Since ~\cite{IP} implies that $n(B(G))=n(\Gamma(G))=n(\Delta(G))$, we conclude that $2\leq n(B(G))\leq 3$. Moreover, ~\cite{L} verifies that $n(B(G))=3$ if and only if $G\cong  A\times PSL(2,2^{n})$ for an abelian group $A$ and a positive integer $n\geq 2$. We continue our discussion by looking at simple groups.
%\begin{lemma}\cite{H}
%For a non-abelian simple group $S$, $B(S)$ is disconnected and all the connected components are paths if and only if $S$ is isomorphic to one of the following groups:
%\begin{itemize}
%\item[(i)] $PSL(2,2^{n})$ where $|\pi(2^{n}\pm 1)|\leq 2$;
%\item[(ii)] $PSL(2,p^{n})$ where $p$ is an odd prime and $|\pi(p^{n}\pm 1)|\leq 2$.
%\end{itemize}
%\end{lemma}
%\begin{lemma}\cite{H}\label{lem:22}
%Let $G$ be a finite non-solvable group. Assume that $B(G)$ is a union of paths. If $|\rho(G)|=5$, then $G\cong  PSL(2,2^{n})\times A$, where $A$ is an abelian group and $|\pi(2^{n}\pm 1)|=2$.
%\end{lemma}
%Finally we may consider the general case, where $G$ is a non-solvable group.
\begin{theorem}~\label{thm:99}
Let $G$ be a finite non-solvable group with $B(G)$ a union of paths. Then $B(G)$ is disconnected, $B(G)$ has $2$ or $3$ connected components and $|\cd(G)|\in \{4,5\}$.

%Let $N$ be the solvable radical of $G$. Then, there exists a normal subgroup $M$ of $G$ such that $G/N$ is an almost simple group with socle $M/N$. Furthermore, $\rho(M)=\rho(G)$ and
Moreover, we have one of the following cases:
\begin{itemize}
\item[(i)]$n(B(G))=2$, $|\cd(G)|=4$, $G$ has a normal subgroup $U$ such that $U\cong \mathrm{PSL}_2(q)$ or $U\cong \mathrm{SL}_2(q)$ for some odd $q\ge 5$ and, if $C:=\cent G U$, then $C\le \Z G$ and $G/C\cong \mathrm{PGL}_2(q)$. Thus $\cd(G)=\{1,q,q-1,q+1\}$.
\item[(ii)]$n(B(G))=2$, $|\cd(G)|=4$, $G$ has a normal subgroup of index $2$ that is a direct product of $\mathrm{PSL}_2(9)$ and a central subgroup $C$. Furthermore, $G/C\cong M_{10}$ and $\cd(G)=\{1,9,10,16\}$.
\item[(iii)]$n(B(G))=2$, $|\cd(G)|=5$, $G$ has a solvable normal subgroup $V$ such that $G/V$ is almost simple and isomorphic to either $\mathrm{PSL}_2(q)$, or $\mathrm{PGL}_2(q)$, or $\mathrm{P}\Gamma\mathrm{L}_2(2^r)$ (for some prime number $r$), $M_{10}$,
$\mathrm{P}\Sigma\mathrm{L}_2(9)$, or $\mathrm{PGL}_2(3^{2f'}).2$ (for some $f'\ge 1$).
\item[(iv)]$n(B(G))=3$, $\cd(G)=\{1,2^n,2^n-1,2^n+1\}$ and $G\cong  \mathrm{PSL}_2(2^{n})\times A$, where $A$ is an abelian group and $n\geq 2$.
\end{itemize}
\end{theorem}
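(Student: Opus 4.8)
The plan is to leverage the classification of non-solvable groups whose character degree graph $\Delta(G)$ has a disconnected (or path-like) structure, since the hypothesis that $B(G)$ is a union of paths is very restrictive for both $\Delta(G)$ and $\Gamma(G)$. First I would observe that if $B(G)$ is a union of paths then $\Delta(G)$ is a union of paths; for a non-solvable group, $\Delta(G)$ has a rather constrained shape by the work of Lewis and White on degree graphs of non-solvable groups. In fact, a non-solvable group has at most two connected components in $\Delta(G)$, and when $\Delta(G)$ is connected it has diameter at most $3$; combined with ``union of paths'' this forces $\Delta(G)$ to be a very short path or a disjoint union of at most two short paths (hence $|\rho(G)|$ is small, at most $5$ or so). Translating back through $B(G)$, each prime vertex and each degree vertex has valency at most $2$, which pins down $|\cd(G)^*|$ to be $3$ or $4$, i.e. $|\cd(G)|\in\{4,5\}$, and the number of connected components of $B(G)$ to be $2$ or $3$. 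I would record that $B(G)$ cannot be connected in the non-solvable case: a connected union of paths is a single path, and by Theorem~\ref{thm:990} a group with $B(G)$ a path is solvable, contradicting non-solvability. This yields the first sentence of the statement.

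Next I would carry out the case analysis by the number of components and by $|\cd(G)|$. The key tool is the known list of non-solvable groups with few character degrees: if $|\cd(G)|=4$, then by results going back to Huppert and refined by various authors, $G$ has a composition factor isomorphic to $\mathrm{PSL}_2(q)$, and a careful bookkeeping of the degrees of $\mathrm{PSL}_2(q)$, $\mathrm{PGL}_2(q)$, $M_{10}$, $\mathrm{SL}_2(q)$ shows that $\cd(G)$ is either $\{1,q,q-1,q+1\}$ (with the structure in (i)) or the sporadic coincidence $\{1,9,10,16\}$ coming from $M_{10}$ in (ii). Here one uses that $B(G)$ being a union of paths forbids, for instance, $q-1$ and $q+1$ sharing two prime divisors and forbids a degree divisible by three distinct primes, which rules out $\mathrm{PSL}_2(q)$ for most $q$ and is what isolates the listed families; the $M_{10}$ case survives precisely because $\{10,16\}$ and $\{9\}$ give $P_3+P_2$. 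For $|\cd(G)|=5$ with two components, I would invoke the structure theorem (cf.~\cite{H} and the literature on degree graphs with a solvable radical $V$ and $G/V$ almost simple): the non-solvable composition factor must be one of a short list of almost simple groups built on $\mathrm{PSL}_2$, and again the ``union of paths'' condition on $B(G)$, applied to $\cd(G/V)$ together with the at most five primes available, selects exactly $\mathrm{PSL}_2(q)$, $\mathrm{PGL}_2(q)$, $\mathrm{P}\Gamma\mathrm{L}_2(2^r)$, $M_{10}$, $\mathrm{P}\Sigma\mathrm{L}_2(9)$, $\mathrm{PGL}_2(3^{2f'}).2$. Finally, for three components, $B(G)$ must be $P_1+P_1+\cdots$ type pieces forcing three pairwise coprime non-trivial degrees with pairwise coprime prime supports; for a non-solvable group this is exactly the $\mathrm{PSL}_2(2^n)$ situation, where $\cd=\{1,2^n,2^n-1,2^n+1\}$ and the three degrees $2^n$, $2^n-1$, $2^n+1$ are pairwise coprime, and one shows the central/normal structure forces $G\cong\mathrm{PSL}_2(2^n)\times A$ with $A$ abelian; this gives (iv).

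The main obstacle I anticipate is case (iii): controlling the possible almost simple quotients $G/V$ when $|\cd(G)|=5$ requires showing that the extra solvable radical $V$ cannot introduce new primes or new degrees that would destroy the path structure, and that no almost simple group outside the listed families can have its degree set extend to a union-of-paths $B(G)$ with only five degrees. This needs the detailed degree information for $\mathrm{PSL}_2(q)$, $\mathrm{P}\Gamma\mathrm{L}_2(2^r)$, etc., together with Clifford-theoretic arguments (as in the proof of Theorem~\ref{thm:P4}, but now with a non-solvable quotient) to see how $\cd(V)$ and $\cd(G/V)$ combine into $\cd(G)$ — in particular that $\cd(G)=\cd(G/V)$ or differs only by removing a linear-induced degree. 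The bipartite graph $B(G)$ is genuinely finer than $\Delta(G)$ and $\Gamma(G)$ here (as the introduction stresses), so one must argue with the actual prime-by-prime incidences, not just with the two quotient graphs. I would structure the write-up so that the generic $\mathrm{PSL}_2(q)$ analysis is done once and the small exceptional groups ($M_{10}$, $\mathrm{PSL}_2(9)$ and its extensions, small $q$) are checked directly, and I expect that step to be where essentially all the work lies.
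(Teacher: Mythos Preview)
Your overall architecture is right --- in particular, using Theorem~\ref{thm:990} to rule out the connected case, splitting by $n(B(G))$ and by $|\cd(G)|$, and invoking the Malle--Moret\'o classification for $|\cd(G)|=4$ --- and matches the paper. There is one slip: you write that a non-solvable group has at most two connected components in $\Delta(G)$, which is false (e.g.\ $\mathrm{PSL}_2(2^n)$ has three); the paper instead quotes the bound $n(\Delta(G))\le 3$ from~\cite[Theorem~6.4]{L} together with $n(B(G))=n(\Delta(G))$ from~\cite{IP}. Your derivation of $|\cd(G)|\in\{4,5\}$ from valency considerations is also too loose; the paper simply cites~\cite{H} for this and for case~(iv).

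The substantive divergence is in case~(iii). You anticipate having to do Clifford theory to control how $\cd(V)$ and $\cd(G/V)$ interact and to rule out extraneous almost simple quotients, and you flag this as the hard step. The paper bypasses all of that: it quotes the He--Zhu classification~\cite[Corollary~C]{LZ} of non-solvable groups with five (or six) character degrees, which already hands you a solvable normal $V$ with $G/V$ almost simple of socle $\mathrm{PSL}_2(p^f)$, $\mathrm{PSL}_3(4)$, or ${}^2B_2(2^{2m+1})$; the latter two are discarded because every almost simple group with such socle has six degrees. Then White's explicit degree tables for almost simple groups over $\mathrm{PSL}_2(p^f)$~\cite[Theorem~A]{white} let one read off, by inspection, which quotients $G/V$ have $B(G/V)$ a union of paths, producing exactly the list in~(iii). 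So the ``main obstacle'' you identify dissolves into two citations; no bespoke Clifford-theoretic argument is needed.
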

\begin{proof}
From Theorem~\ref{thm:990}, the non-solvability of $G$ implies that $B(G)$ is disconnected. By~\cite[Theorem 6.4]{L}, we have $n(\Delta(G))\le 3$ and,  by~\cite{IP}, we have $n(B(G))=n(\Delta(G))$. Thus $B(G)$ is disconnected with at most three connected components.

When $n(B(G))=3$, the result follows from~\cite{H} and we obtain part~(iv). Suppose then $n(B(G))=2$. From~\cite{H}, we deduce $\cd(G)\in \{4,5\}$. When $\cd(G)=4$, the result follows from~\cite[Theorem~A]{GA} and we obtain parts~(i) and~(ii).  Finally, suppose that $\cd(G)=5$. The finite groups having $5$ (or $6$) character degrees are classified in~\cite[Corollary~C]{LZ}. From this classification, we see that $G$ contains a normal solvable subgroup $V$ with $G/V$ almost simple with socle $\mathrm{PSL}_2(p^f)$ (with $p^f\ge 4$), or $\mathrm{PSL}_3(4)$, or $^{2}B_2(2^{2m+1})$ (with $m\ge 1$). The cases $\mathrm{PSL}_3(4)$ and $^2{B}_2(2^f)$ do not arise here because any almost simple group having socle one of these two groups has $6$ irreducible complex character degrees. In particular, $G/V$ is almost simple with socle $\mathrm{PSL}_2(p^f)$. Now, the actual structure of $G/V$ can be inferred from the work of White~\cite[Theorem~A]{white}. Indeed, White computes explicitly the character degrees of each almost simple group $X$ having socle $\mathrm{PSL}_2(p^f)$. In part~(iii), we have selected the groups $X$ with $B(X)$ a union of paths.
\end{proof}

Observe that the converse of Theorem~\ref{thm:99} does not hold; for instance, $B(\textrm{PSL}_2(2^n))$ consists of three connected components for each value of $n$, however these connected components are not necessarily paths (this depends on number-theoretic questions concerning the factorization of $2^n+1$ and $2^n-1$).

\vskip 0.3 true cm

Now, we let $G$ be a {\em solvable} group with $B(G)$ disconnected and union of paths (the connected case was discussed in the previous section). As $G$ is solvable, we have $n(B(G))=2$ and $|\rho(G)|\geq 2$. Here we describe the structure of $G$ using  the  types introduced by Lewis. By Remark~\ref{rem:20}, type $3$ does not occur. As $\Delta(G)$ is triangle-free, the main result in~\cite[Lemma~$2.2$]{Tong-viet} yields $|\rho(G)|\leq 4$. Using $n(B(G))=2$ and $2\le |\rho(G)|\le 4$, a simple case-by-case analysis gives that $B(G)$ is one of the graphs drawn in Figures~\ref{fig: 4},~\ref{fig: 5} or~\ref{fig: 6}. We have tabulated some information on these graphs and the groups yielding these graphs inTable~\ref{tab:TTCR}. This table consists of three columns: the first column is one of the graphs $\Gamma$ in Figures~\ref{fig: 4},~\ref{fig: 5} or~\ref{fig: 6}, the second column are the Lewis types $X\subseteq\{1,2,3,4,5,6\}$ of the groups $G$ with $\Gamma\cong B(G)$ and in the third column  we exhibit (for each $x\in X$) a group of Lewis type $x$ and with $\Gamma\cong B(G)$. The task in the rest of this section is proving the correctedness of Table \ref{tab:TTCR}.

\begin{lemma}\label{lemma:new}
Let $G$ be a group with $B(G)$ isomorphic to the second graph in Figure~$\ref{fig: 4}$. Then $G$ is not of Lewis type $4$.
\end{lemma}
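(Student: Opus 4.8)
The second graph in Figure~\ref{fig: 4} is the disjoint union $P_{1}+P_{2}$ of a path of length one and a path of length two; unravelling its adjacencies, this forces $\cd(G)=\{1,p^{\alpha},q^{\beta},q^{\gamma}\}$ for two distinct primes $p,q$ and positive integers $\beta<\gamma$, so that $\Delta(G)=K_{1}+K_{1}$ on the vertices $p$ and $q$. My plan is to establish the (a priori stronger) fact that a solvable group of Lewis type $4$ has at most three irreducible character degrees; since here $|\cd(G)|=4$, this yields the lemma.

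Assume then that $G$ is of Lewis type $4$, and put $F:=\F G$ and $E:={\bf F}_{2}(G)$; as $h(G)\ge 3$ we have $1<F<E<G$. I would feed the description of type-$4$ groups in~\cite[Lemma~3.4]{ML2} into Clifford theory. From that description one extracts that $G/F$ is a Frobenius group with kernel $E/F$ and cyclic complement $G/E$, that $F$ is abelian, that $E/F$ acts on $\Irr(F)\setminus\{1_F\}$ with all stabilisers trivial (that is, $I_{G}(\lambda)\cap E=F$ for every non-principal $\lambda\in\Irr(F)$), and that the two connected components of $\Delta(G)$ are $\pi(E/F)$ and $\pi(G/E)$. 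Since those components are the singletons $\{p\}$ and $\{q\}$, we have $\pi(E/F)\cap\pi(G/E)=\varnothing$; write $\pi(E/F)=\{\ell\}$.

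Now compute $\cd(G)$ in two steps. First, $E/F$ is abelian: otherwise it affords a non-linear character, of $\ell$-power degree $d>1$, which in the Frobenius group $G/F$ induces to a character of $G$ of degree $|G/E|\cdot d$; since $G\neq E$, this degree is divisible by a prime of $\pi(E/F)$ and by a prime of $\pi(G/E)$, so $\Delta(G)$ would have an edge joining its two components — impossible. With $E/F$ abelian and $G/E$ a cyclic Frobenius complement we get $\cd(G/F)=\{1,|G/E|\}$. Second, let $\chi\in\Irr(G)$ with $F$ not contained in the kernel of $\chi$; then $\chi$ lies over a non-principal $\lambda\in\Irr(F)$, and $I_{G}(\lambda)/F\cong I_{G}(\lambda)E/E$ is — by the stabiliser hypothesis — a subgroup of the cyclic group $G/E$, so $|E/F|$ divides $[G:I_{G}(\lambda)]$ and hence $\ell\mid\chi(1)$. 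As $\chi(1)$ is a prime power, it is an $\ell$-power; and since $|G/E|$ is coprime to $\ell$, the index $[G:I_{G}(\lambda)]=|E/F|\cdot\bigl(|G/E|/|I_{G}(\lambda)/F|\bigr)$ can be an $\ell$-power only if $I_{G}(\lambda)/F=G/E$, i.e.\ $[G:I_{G}(\lambda)]=|E/F|$. Since $F$ is abelian, $\lambda$ is linear, and since $I_{G}(\lambda)/F$ is cyclic, $\lambda$ extends to $I_{G}(\lambda)$; hence every character of $G$ lying over $\lambda$ has degree $[G:I_{G}(\lambda)]=|E/F|$. Therefore every irreducible character degree of $G$ equals $1$, $|G/E|$ or $|E/F|$, so $\cd(G)=\{1,|G/E|,|E/F|\}$ has at most three elements, contradicting $|\cd(G)|=4$. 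Hence $G$ is not of Lewis type $4$.

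The step I expect to cause the most trouble is making sure that~\cite[Lemma~3.4]{ML2} really supplies every structural input used above, in particular that for Lewis type $4$ the Frobenius complement $G/{\bf F}_{2}(G)$ is \emph{cyclic} and the Fitting subgroup $\F G$ is \emph{abelian}: it is exactly the cyclicity of the complement (which excludes, say, a generalised quaternion quotient contributing a further non-trivial degree) together with the abelianness of $\F G$ (which excludes two distinct degrees among the characters over $\F G$) that collapses $\cd(G)$ to three elements. If one prefers to invoke less of~\cite{ML2}, the argument can be recast as a short dichotomy on whether $\pi({\bf F}_{2}(G)/\F G)$ equals $\{q\}$ or $\{p\}$: the first alternative contradicts $q^{\beta}\neq q^{\gamma}$ at once, while in the second one is forced to $q=2$ and $\cd(G)=\{1,p^{\alpha},2,2^{n}\}$ with $G/{\bf F}_{2}(G)$ generalised quaternion, and this configuration is eliminated by exhibiting a normal non-abelian Sylow subgroup of $G$, against Remark~\ref{rem:20}.
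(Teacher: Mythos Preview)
Your central claim---that every solvable group of Lewis type~$4$ has at most three irreducible character degrees---is false, and this is where the argument collapses. A concrete counterexample is $G=\mathrm{A}\Gamma\mathrm{L}(1,16)=\mathbb{F}_{16}\rtimes\Gamma\mathrm{L}(1,16)$: here $V=\F G\cong\mathbb{F}_{16}$, $K=\F{(G/V)}\cong\mathbb{F}_{16}^{*}$ has order $15$, $H/K\cong\mathrm{Gal}(\mathbb{F}_{16}/\mathbb{F}_{2})$ is cyclic of order $4$, and one computes $\cd(G)=\{1,2,4,15\}$. This group is of Lewis type~$4$ (compare the entry $\mathtt{PrimitiveSolvablePermGroup}(4,2,1,4)$ in Table~\ref{tab:TTCR}) and has four degrees; its bipartite divisor graph is the second graph in Figure~\ref{fig: 5}, not in Figure~\ref{fig: 4}, which is precisely the distinction the lemma is meant to capture.

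The error lies in asserting that $G/\F G$ is a Frobenius group with kernel ${\bf F}_2(G)/\F G$. Lewis's Lemma~3.4 does not say this. In the type~$4$ set-up one has $H=G/V\le\Gamma\mathrm{L}(1,q^{m})$ with $K=\F H$ cyclic and $H/K$ cyclic of order~$m$, but the Galois action of $H/K$ on $K$ can fix non-identity elements (in the example above, the involution $x\mapsto x^{4}$ fixes $\mathbb{F}_{4}^{*}\subset\mathbb{F}_{16}^{*}$), so $H$ need not be Frobenius. This is exactly why $\cd(G/V)$ may contain proper divisors of~$m$ beyond $1$ and~$m$, and why the lemma requires a genuine arithmetic argument rather than a degree count. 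The paper's proof accepts that $\cd(G/V)=\{1,s^{t},s^{t'}\}$ with $0<t'<t$ can occur, deduces $t\ge 2$, and then applies Zsigmondy's theorem to show that $(q^{s^{t}}-1)/(q-1)$ cannot be a prime power when $t\ge 2$; that number-theoretic step is the missing idea. Your hedge at the end correctly senses that something from \cite[Lemma~3.4]{ML2} may be overreached, but it is the Frobenius hypothesis itself---not the cyclicity of $G/{\bf F}_2(G)$ or the abelianness of $\F G$---that fails.
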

\begin{proof}
We argue by contradiction and we suppose that $G$ is of type $4$. We use the notation established in~\cite{ML2} for the groups of type $4$ and we suppose that the reader is familiar with basic properties about groups in this class (in particular with Example~$2.4$ and Lemma~$3.4$ in~\cite{ML2}). Since $\cd(G)=\cd(G/\Z G)$ by~\cite[Lemma~$3.4$]{ML2}, we may suppose $\Z G=1$. In particular, $G=V\rtimes H$ and $H$ acts irreducibly as a linear group on the elementary abelian $p$-group $V$.

Recall, from~\cite{ML2}, that $K:=\F H$, $m:=|E:K|>1$, $K$ is cyclic, $|V|=q^m$ where $q$ is a power of the prime $p$, and $(q^m-1)/(q-1)$ divides $|K|$. Now, $\cd(G|V)=\{|K|\}$ and $\cd(G/V)$ consists of $1,m$ and eventually some other divisors of $m$, see~\cite[Lemma~$3.4$]{ML2}.

As $B(G)$ is isomorphic to the second graph in Figure~\ref{fig: 4}, we deduce that $|K|$ is a prime power (say $|K|=r^\ell$ for some prime number $r$ and some positive integer $\ell$), $m$ is a prime power (say $m=s^t$ for some prime number $s$ and some positive integer $t$) and $\cd(G/V)=\{1,s^t,s^{t'}\}$ for some $0<t'<t$. In particular, $t\ge 2$.

Recall that, given two positive integers $x$ and $y$, the prime number $z$ is said to be  a primitive prime divisor of $x^y-1$ if $z$ divides $x^y-1$, but (for every $i\in \{1,\ldots,y-1\}$) $z$ does not divide $x^i-1$.

Let $z$ be a primitive prime divisor of $q^{m}-1=q^{s^t}-1$. As $(q^m-1)/(q-1)$ divides $|K|=r^\ell$, we deduce $z=r$. As $(q^{s^{t-1}}-1)/(q-1)$ divides $(q^{s^t}-1)/(q-1)$ and $t\ge 2$, we deduce that $r$ divides $q^{s^{t-1}}-1$, contradicting the fact that $r$ is  a primitive prime divisor of $q^m-1=q^{s^t}-1$. Therefore $q^{s^t}-1$ has no primitive prime divisors. From a celebrated theorem of Zsigmondy~\cite{zsigmondy}, we deduce that either $q=2$ and $s^t=6$, or $s^t=2$ and $q$ is a Mersenne prime, however in both cases we obtain a contradiction.
\end{proof}

\begin{lemma}\label{lemma:new1}
Let $G$ be a group with $B(G)$ isomorphic to the second graph in Figure~$\ref{fig: 5}$, to the first or to the third graph in Figure~$\ref{fig: 6}$, or to the third graph in Figure~\ref{fig: 2}. Then $G$ is not of Lewis type $6$.
\end{lemma}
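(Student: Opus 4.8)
The plan is to argue by contradiction: assume that $G$ is of Lewis type~$6$ and deduce that $B(G)$ cannot be any of the four graphs listed in the statement. The input is Remark~\ref{rem:20}: a type-$6$ group $G$ has a normal \emph{non-abelian} Sylow $p$-subgroup $P$, has $h(G)\ge 3$, and $\Delta(G)$ has exactly two connected components, one of which --- call it $\mathcal{C}$ --- has vertex set $\pi([{\bf F}_{2}(G):\F G])\cup\{p\}$ and at least two vertices. (Note $P\le \F G$, so $p\notin\pi([{\bf F}_{2}(G):\F G])$.)

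First I would record what ``$B(G)$ is a union of paths'' forces: every vertex of $B(G)$ has valency at most~$2$ and $B(G)$ is acyclic, so no element of $\cd(G)^{*}$ is divisible by three distinct primes and no prime in $\rho(G)$ divides three distinct character degrees. Now I bring in the detailed description of type-$6$ groups from~\cite[Lemma~3.6]{ML2}, from which I expect to extract two facts: \textbf{(a)} $\cd(G)$ contains a non-trivial power $p^{\alpha}$ of $p$ (this is where the non-abelianity of $P$ enters), and \textbf{(b)} no non-linear character degree of $G$ is supported on $\pi([{\bf F}_{2}(G):\F G])$ alone, i.e.\ every character degree divisible by some prime of $\pi([{\bf F}_{2}(G):\F G])$ is also divisible by $p$.

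With (a), (b) and the valency bounds in hand, I would determine the connected component of $B(G)$ meeting $\mathcal{C}$ as follows. By (b) together with ``no degree has three prime divisors'', no character degree is divisible by two distinct primes of $\pi([{\bf F}_{2}(G):\F G])$; hence every edge of $\mathcal{C}$, viewed in $\Delta(G)$, passes through $p$, so $\mathcal{C}$ is a star centred at $p$. Since $|V(\mathcal{C})|\ge 2$, the prime $p$ is $\Delta(G)$-adjacent to some prime, hence divides some degree $m$ other than $p^{\alpha}$; as $p$ divides at most two character degrees, $m$ is the only such degree, and ``no three prime divisors'' gives $\pi(m)=\{p,q\}$ for a single $q\in\pi([{\bf F}_{2}(G):\F G])$. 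Any prime $\Delta(G)$-adjacent to $p$ divides a degree divisible by $p$, hence divides $m$, hence equals $q$; so $V(\mathcal{C})=\{p,q\}$. Finally, by (b) the neighbours of $q$ in $B(G)$ are among the $p$-divisible degrees, so the only one is $m$, while $p^{\alpha}$ is a pendant at $p$. Therefore the connected component of $B(G)$ containing $p$ is exactly the path $P_{3}$ whose vertices, read along the path, are $p^{\alpha}$, $p$, $m$, $q$.

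The last step is to check that in each of the four graphs listed in the statement the unique connected component containing at least two elements of $\rho(G)$ is \emph{not} of this form: in the relevant cases it is either a $P_{2}$ (with no room for the pendant prime power $p^{\alpha}$), a path longer than $P_{3}$, or a path containing three or more primes (forcing $|V(\mathcal{C})|>2$). Each alternative contradicts the previous paragraph, so $G$ is not of type~$6$. I expect the only real work to be the second step --- verifying (a) and (b) from~\cite[Lemma~3.6]{ML2}, rather than merely observing that they hold for typical type-$6$ groups; after that, the argument is the same sort of valency-and-path bookkeeping as in the proof of Lemma~\ref{lemma:new}.
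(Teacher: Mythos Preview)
There is a genuine gap: the two facts (a) and (b) you hope to extract from~\cite[Lemma~3.6]{ML2} are both false for type-6 groups. The actual content of that lemma (as the paper uses it) is that $\cd(G)=\cd(G/A')\cup\cd(G|A')$, where $A=P$ is the normal Sylow $p$-subgroup, $G/A'$ is of Lewis type~4, and every degree in $\cd(G|A')$ is divisible by $p|B|$ with $\gcd(p,|B|)=1$ and $|B|>1$. Since $A/A'$ is an abelian normal Sylow $p$-subgroup of $G/A'$, It\^o's theorem forces every degree in $\cd(G/A')$ to be coprime to $p$; hence \emph{no} element of $\cd(G)$ is a pure $p$-power, contrary to~(a). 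For~(b), the type-4 degree $|K|\in\cd(G/A')\subseteq\cd(G)$ (with $K=\F{G/A}$) has $\pi(|K|)=\pi([{\bf F}_2(G):\F G])$ and is not divisible by $p$. The legitimate type-6 example $\mathtt{SmallGroup}(1344,816)$ in Table~\ref{tab:TTCR} exhibits both failures.

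Even granting your (a) and (b), the final case-check breaks down for the first graph in Figure~\ref{fig: 6}: its component containing two primes \emph{is} a $P_3$ read as degree--prime--degree--prime, precisely the shape you derive, so no contradiction arises there. (You also tacitly assume throughout that $B(G)$ is a union of paths, but the third graph in Figure~\ref{fig: 2} has a degree-vertex of valency~3.) The paper's proof takes a different route: it passes to the type-4 quotient $G/A'$, observes that $B(G/A')$ is obtained from $B(G)$ by deleting some degree-vertices with at least two prime-neighbours, and checks in each case that the resulting graph cannot belong to a type-4 group. For the first graph in Figure~\ref{fig: 6}, deleting the unique such degree yields the second graph in Figure~\ref{fig: 4}, and then Lemma~\ref{lemma:new} provides the contradiction --- an appeal your argument omits and cannot avoid.
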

\begin{proof}
We argue by contradiction and we suppose that $G$ is of type $6$. We use the notation established in~\cite{ML2} for the groups of type $6$ and we suppose that the reader is familiar with basic properties about groups in this class (in particular with Example~$2.6$ and Lemma~$3.6$ in~\cite{ML2}). From~\cite[Lemma~$3.6$~(v),(vi)]{ML2}, $\cd(G)=\cd(G/A')\cup\cd(G|A')$, where $G/A'$ is a group of Lewis type $4$ and $\cd(G|A')$ consists of degrees that divide $|P||E:F|$ and are divisible by $p|B|$, (observe that $p$ and $|B|$ are relatively prime). These facts together imply that $B(G/A')$ is a union of paths and is obtained by deleting some blue vertices of $B(G)$ having at least two light blue neighbors. If $B(G)$ is the third graph in Figure~\ref{fig: 6}, then we can delete only one vertex from  $B(G)$ in order to obtain $B(G/A')$. However, the resulting graph has three connected components, contradicting the fact that $G/A'$ is solvable. If $B(G)$ is the first graph in Figure~\ref{fig: 6}, then (again) we can delete only one vertex from  $B(G)$ in order to obtain $B(G/A')$. Therefore, the resulting graph is the second graph in Figure~\ref{fig: 4}. However, Lemma~\ref{lemma:new} excludes this possibility. If $B(G)$ is the second graph in Figure~\ref{fig: 5}, then  we can delete only one vertex from  $B(G)$ in order to obtain $B(G/A')$. Now, the resulting graph is connected, contradicting the fact that it is a graph of Lewis type $4$ and hence disconnected. Finally, if $B(G)$ is the third graph in Figure~\ref{fig: 2}, then we can delete only one vertex from $B(G)$ to obtain $B(G/A')$. The result is a connected graph which is impossible.
\end{proof}

\begin{example}\label{exa:new}
{\rm Let $P$ be the group $\mathtt{SmallGroup}(2\,187,9\,308)$. A computation shows that $\mathrm{Aut}(P)$ contains a cyclic subgroup $H$ of order $10$ with the property that $\cd(P\rtimes H)=\{1,2,10,27\}$ and $P\rtimes H$ has Lewis type $1$. In particular, $G:=P\rtimes H$ is a finite soluble group of order $21\,870$ of Lewis type $1$ with $\cd(G)=\{1,2,10,27\}$ and with  $B(G)$  isomorphic to the third graph in Figure~\ref{fig: 5}.

This example was constructed with the help of a computer after deducing some preliminary theoretical properties. Incidentally, this is the smallest example we managed to find, but we were not
 able to prove that it is indeed the example of smallest cardinality with $B(G)$ isomorphic to the third graph in Figure~\ref{fig:  5}.}
\end{example}

\begin{example}\label{exa:newnew}
{\rm Let $G$ be the polycyclic group with presentation $G:=\langle x_1,\ldots,x_{15}\mid R\rangle$, where the set of polycyclic relations $R$ are given by
\begin{align*}
&x_1^2 = x_{15},\,
    x_2^2 = x_{15},\,
    x_3^3 = 1,\,
    x_4^{11} = 1,\,
    x_5^2 = x_{15},\,
    x_6^2 = 1,\,
    x_7^2 = 1,\,
    x_8^2 = 1,\,
    x_9^2 = 1,\,
x_{10}^2 = x_{15},\,
x_{11}^2 = 1,\\
&    x_{12}^2 = 1,\,
    x_{13}^2 = 1,\,
    x_{14}^2 = 1,\,
    x_{15}^2 = 1,\,
    x_2^{x_1} = x_2 \cdot x_{15},\,
    x_3^{x_2} = x_3^2,\,
    x_4^{x_2} = x_4^{10},\,
    x_5^{x_2} = x_5 \cdot x_{15},\\
&x_5^{x_3} = x_5 \cdot x_7 \cdot x_{10} \cdot x_{11} \cdot x_{12},\,
    x_5^{x_4} = x_{10},\,
    x_6^{x_3} = x_6 \cdot x_8 \cdot x_{11} \cdot x_{12} \cdot x_{13},
    x_6^{x_4} = x_{11},\,
x_7^{x_3} = x_7 \cdot x_9 \cdot x_{12} \cdot x_{13} \cdot x_{14},\\
&    x_7^{x_4} = x_{12},\,
    x_8^{x_3} = x_5 \cdot x_7 \cdot x_8 \cdot x_{10} \cdot x_{12} \cdot x_{13} \cdot x_{14},\,
    x_8^{x_4} = x_{13},\,
    x_9^{x_3} = x_6 \cdot x_8 \cdot x_9 \cdot x_{10} \cdot x_{11} \cdot x_{12} \cdot x_{13} \cdot x_{14},\\
&    x_9^{x_4} = x_{14},\,
    x_{10}^{x_2} = x_7 \cdot x_8 \cdot x_{10} \cdot x_{15},\,
    x_{10}^{x_3} = x_5 \cdot x_6 \cdot x_7 \cdot x_{12} \cdot x_{15},\,
    x_{10}^{x_4} = x_5 \cdot x_{12} \cdot x_{13},\,
    x_{10}^{x_5} = x_{10} \cdot x_{15},\,\\
&    x_{10}^{x_6} = x_{10} \cdot x_{15}, \,
    x_{10}^{x_7} = x_{10} \cdot x_{15}, \,
    x_{10}^{x_8} = x_{10} \cdot x_{15}, \,
    x_{11}^{x_2} = x_8 \cdot x_9 \cdot x_{11}, \,
    x_{11}^{x_3} = x_6 \cdot x_7 \cdot x_8 \cdot x_{13} \cdot x_{15},\,\\
&    x_{11}^{x_4} = x_6 \cdot x_{13} \cdot x_{14}, \,
    x_{11}^{x_5} = x_{11} \cdot x_{15}, \,
    x_{11}^{x_6} = x_{11} \cdot x_{15}, \,
    x_{11}^{x_7} = x_{11} \cdot x_{15}, \,
    x_{12}^{x_2} = x_5 \cdot x_7 \cdot x_9 \cdot x_{12}, \,\\
&    x_{12}^{x_3} = x_7 \cdot x_8 \cdot x_9 \cdot x_{14} \cdot x_{15}, \,
    x_{12}^{x_4} = x_7 \cdot x_{10} \cdot x_{12} \cdot x_{14}, \,
    x_{12}^{x_5} = x_{12} \cdot x_{15}, \,
    x_{12}^{x_6} = x_{12} \cdot x_{15}, \,\\
&    x_{13}^{x_2} = x_5 \cdot x_6 \cdot x_7 \cdot x_8 \cdot x_{13} \cdot x_{15}, \,
    x_{13}^{x_3} = x_5 \cdot x_7 \cdot x_8 \cdot x_9 \cdot x_{10} \cdot x_{12}, \,
    x_{13}^{x_4} = x_8 \cdot x_{10} \cdot x_{11} \cdot x_{12} \cdot x_{13} \cdot x_{15}, \,\\
&    x_{13}^{x_5} = x_{13} \cdot x_{15}, \,
    x_{13}^{x_9} = x_{13} \cdot x_{15}, \,
    x_{14}^{x_2} = x_6 \cdot x_7 \cdot x_8 \cdot x_9 \cdot x_{14} \cdot x_{15}, \,
    x_{14}^{x_3} = x_5 \cdot x_6 \cdot x_7 \cdot x_8 \cdot x_9 \cdot x_{11} \cdot x_{13} \cdot x_{15}, \,\\
&    x_{14}^{x_4} = x_9 \cdot x_{11} \cdot x_{12} \cdot x_{13} \cdot x_{14} \cdot x_{15}, \,
    x_{14}^{x_8} = x_{14} \cdot x_{15}, \,
    x_{14}^{x_9} = x_{14} \cdot x_{15}.
\end{align*}
The group $G$ has order $270\, 336=2^{13}\cdot 3\cdot 11$; moreover, $G$ contains a Hall $2'$-subgroup $K$ with $K$ cyclic and $G$ contains a normal $2$-subgroup $Q$ with $|G:QK|=2$. Furthermore, $Q/\gamma_2(Q)$ is elementary abelian of order $2^{10}=1\,024$, $\gamma_2(Q)$ is elementary abelian of order $2^2=4$ and $K$ centralizes $\gamma_2(Q)$. One might check that $G$ has Lewis type $5$. Finally, $\cd(G)=\{1,2,33,64\}$ and hence $B(G)$ is isomorphic to the second graph in Figure~\ref{fig: 5}.

As in Example~\ref{exa:new}, this example was constructed (with some luck) with the help of a computer after deducing some preliminary theoretical properties.}
\end{example}
\begin{example}\label{exa:newnewnew}
{\rm Let $G$ be the polycyclic group with presentation $G:=\langle x_1,\ldots,x_{16}\mid R\rangle$, where the set of polycyclic relations $R$ are given by
\begin{align*}
&x_1^3 = 1,
    x_2^{11} = 1,
    x_3^2 = x_{13},
    x_4^2 = x_{13} \cdot x_{16},
    x_5^2 = x_{13},
    x_6^2 = x_{13},
    x_7^2 = 1,
    x_8^2 = 1,
    x_9^2 = x_{16},
    x_{10}^2 = x_{13},
    x_{11}^2 = x_{13},\\
&    x_{12}^2 = x_{16},
    x_{13}^2 = 1,
    x_{14}^2 = 1,
    x_{15}^2 = 1,
    x_{16}^2 = 1,
    x_3^{x_1} = x_4 \cdot x_6 \cdot x_8,
    x_3^{x_2} = x_4 \cdot x_6 \cdot x_7 \cdot x_8 \cdot x_{11} \cdot x_{12},\\
&    x_4^{x_1} = x_5 \cdot x_7 \cdot x_9,
    x_4^{x_2} = x_3 \cdot x_4 \cdot x_6 \cdot x_7 \cdot x_{12},
    x_5^{x_1} = x_6 \cdot x_8 \cdot x_{10},
    x_5^{x_2} = x_3 \cdot x_6 \cdot x_7 \cdot x_9,
    x_5^{x_3} = x_5 \cdot x_{13},\\
&    x_6^{x_1} = x_7 \cdot x_9 \cdot x_{11},
    x_6^{x_2} = x_4 \cdot x_7 \cdot x_8 \cdot x_{10},
    x_6^{x_3} = x_6 \cdot x_{13} \cdot x_{16},
    x_6^{x_4} = x_6 \cdot x_{16},
    x_6^{x_5} = x_6 \cdot x_{16},
    x_7^{x_1} = x_8 \cdot x_{10} \cdot x_{12},\\
&    x_7^{x_2} = x_5 \cdot x_8 \cdot x_9 \cdot x_{11} \cdot x_{13} \cdot x_{16},
    x_7^{x_3} = x_7 \cdot x_{13} \cdot x_{16},
    x_7^{x_4} = x_7 \cdot x_{16},
    x_7^{x_5} = x_7 \cdot x_{13},
    x_7^{x_6} = x_7 \cdot x_{13}, \\
&    x_8^{x_1} = x_3 \cdot x_4 \cdot x_5 \cdot x_6 \cdot x_8 \cdot x_{11},
    x_8^{x_2} = x_6 \cdot x_9 \cdot x_{10} \cdot x_{12} \cdot x_{16},
    x_8^{x_3} = x_8 \cdot x_{16},
    x_8^{x_4} = x_8 \cdot x_{16},
    x_8^{x_6} = x_8 \cdot x_{13} \cdot x_{16},\\
&    x_8^{x_7} = x_8 \cdot x_{13},
    x_9^{x_1} = x_4 \cdot x_5 \cdot x_6 \cdot x_7 \cdot x_9 \cdot x_{12},
    x_9^{x_2} = x_3 \cdot x_4 \cdot x_5 \cdot x_6 \cdot x_7 \cdot x_8 \cdot x_9 \cdot x_{10} \cdot x_{11} \cdot x_{13},
    x_9^{x_3} = x_9 \cdot x_{16}, \\
&    x_9^{x_4} = x_9 \cdot x_{13},
    x_9^{x_5} = x_9 \cdot x_{16},
    x_9^{x_6} = x_9 \cdot x_{13} \cdot x_{16},
    x_9^{x_7} = x_9 \cdot x_{13} \cdot x_{16},
    x_9^{x_8} = x_9 \cdot x_{16},
    x_{10}^{x_1} = x_3 \cdot x_4 \cdot x_7 \cdot x_9 \cdot x_{10},\\
&    x_{10}^{x_2} = x_4 \cdot x_5 \cdot x_6 \cdot x_7 \cdot x_8 \cdot x_9 \cdot x_{10} \cdot x_{11} \cdot x_{12} \cdot x_{16},
    x_{10}^{x_3} = x_{10} \cdot x_{16},
    x_{10}^{x_4} = x_{10} \cdot x_{13} \cdot x_{16},
    x_{10}^{x_5} = x_{10} \cdot x_{13} \cdot x_{16},\\
&    x_{10}^{x_6} = x_{10} \cdot x_{16},
    x_{10}^{x_7} = x_{10} \cdot x_{13},
    x_{10}^{x_9} = x_{10} \cdot x_{13},
    x_{11}^{x_1} = x_4 \cdot x_5 \cdot x_8 \cdot x_{10} \cdot x_{11},
    x_{11}^{x_2} = x_3 \cdot x_4 \cdot x_7 \cdot x_{10} \cdot x_{11} \cdot x_{12} \cdot x_{16}, \\
&    x_{11}^{x_3} = x_{11} \cdot x_{13} \cdot x_{16},
    x_{11}^{x_4} = x_{11} \cdot x_{13},
    x_{11}^{x_6} = x_{11} \cdot x_{13} \cdot x_{16},
    x_{11}^{x_8} = x_{11} \cdot x_{13} \cdot x_{16},
    x_{11}^{x_9} = x_{11} \cdot x_{13},
    x_{11}^{x_{10}} = x_{11} \cdot x_{13} \cdot x_{16},\\
&    x_{12}^{x_1} = x_5 \cdot x_6 \cdot x_9 \cdot x_{11} \cdot x_{12},
    x_{12}^{x_2} = x_3 \cdot x_6 \cdot x_9 \cdot x_{11} \cdot x_{12} \cdot x_{13},
    x_{12}^{x_3} = x_{12} \cdot x_{13},
    x_{12}^{x_4} = x_{12} \cdot x_{13},
    x_{12}^{x_5} = x_{12} \cdot x_{13} \cdot x_{16},\\
&    x_{12}^{x_6} = x_{12} \cdot x_{13} \cdot x_{16},
    x_{12}^{x_8} = x_{12} \cdot x_{13},
    x_{12}^{x_9} = x_{12} \cdot x_{13},
    x_{12}^{x_{10}} = x_{12} \cdot x_{16},
    x_{12}^{x_{11}} = x_{12} \cdot x_{13} \cdot x_{16},
    x_{15}^{x_{14}} = x_{15} \cdot x_{16}.
\end{align*}
The group $G$ has order $540\, 672=2^{14}\cdot 3\cdot 11$; moreover, $G$ contains a Hall $2'$-subgroup $K$ with $K$ cyclic and $G$ contains a normal Sylow $2$-subgroup $P$. Furthermore, $P/\gamma_2(P)$ is elementary abelian of order $2^{12}=2\,048$, $\gamma_2(P)$ is elementary abelian of order $2^2=4$ and $K$ centralizes $\gamma_2(P)$. One might check that $G$ has Lewis type $1$. Finally, $\cd(G)=\{1,32,33,64\}$ and hence $B(G)$ is isomorphic to the second graph in Figure~\ref{fig: 5}.
}
\end{example}

\begin{remark}{\rm
If $|\cd(G)^{*}|=4$, then $B(G)$ is either the first or the second  graph in Figure~\ref{fig: 6} or the third graph in Figure~\ref{fig: 4}. Except for the second graph in Figure~\ref{fig: 6}, as $\Gamma(G)$ has no isolated vertices, by ~\cite[Theorem 5.2]{ML2}, we deduce that $G$ has a normal non-abelian Sylow subgroup. Now Remark~\ref{rem:20} implies that $G$ is a group of type $1$ or $6$ in the sense of Lewis. For the first graph in Figure~\ref{fig: 6} the case of Lewis type $6$ is excluded by Lemma~\ref{lemma:new1}. %In particular, if $B(G)$ is the first graph in Figure~\ref{fig: 6}, then ~\cite[Theorem 3.1]{LL} verifies that $G$ is a semidirect product $T\rtimes A$, where the abelian group $A$ acts coprimely on $T$ and $T$ is a $p$-group or the commutator subgroup $[T,A]$ is a Frobenius group.
If $B(G)$ is the last graph in Figure~\ref{fig: 4}, then both connected components of $\Delta(G)$ are isolated vertices; so by Remark~\ref{rem:20} and the previous results we conclude that $G$ is a group of type $1$ (see also~\cite[Theorem~$3.1$]{LL}).

If $B(G)$ is the second graph in Figure~\ref{fig: 6}, then $\Gamma(G)\cong  K_{1}+P_{2}$  consists of one isolated vertex and one edge and hence, by ~\cite[Theorem 3.3]{LL}, we deduce that $G$ is either a group of type $1$ or $4$ in the sense of Lewis. If $G$ has no non-abelian normal Sylow subgroup, then ~\cite[Theorem 5.2]{ML2} implies that the prime divisors of the isolated vertex of $\Gamma(G)$ gives the larger component of $\Delta(G)$, which is not the case. Thus $G$ has a non-abelian normal Sylow subgroup. This implies that $G$ is not a group of type $4$ and so it is a group of type $1$.
\vskip 0.3 true cm

Suppose $B(G)$ is one of the first two graphs in Figure~\ref{fig: 4}. As $\Delta(G)$  has two isolated vertices, from Remark~\ref{rem:20}, we conclude that $G$ is neither a group of type $3$ nor of type $6$ in the sense of Lewis. If $B(G)$ is the first graph in Figure~\ref{fig: 4}, then it is a $1$-regular bipartite graph. The structure and the Lewis type of such a group is explicitly explained in Theorem~\ref{thm: reg} below (and we refer the reader to this theorem for a detailed description). Finally, if $B(G)$ is the second graph in Figure~\ref{fig: 4}, then $G$ is a group of type $1$, $2$ or $5$ in the sense of Lewis (type $4$ does not arise because of Lemma~\ref{lemma:new}).
\vskip 0.3 true cm

Suppose $B(G)$ is either the first or the third graph in Figure~\ref{fig: 5}. By Remark~\ref{rem:20}, $G$ is not a group of type $2$ or $3$. If $B(G)$ is the third graph in this figure and $G$ has no non-abelian normal Sylow subgroup, then by ~\cite[Theorem 5.2]{ML2} we conclude that the prime divisors of the isolated vertex of $\Gamma(G)$ lie in a larger component of $\Delta(G)$ which is not the case for this graph. Hence $G$ has a non-abelian normal Sylow subgroup which implies that $G$ is of Lewis type $1$ or $6$.

Suppose $B(G)$ is the first graph in Figure~\ref{fig: 5}. As $|cd(G)^{*}|$ consists of co-prime degrees, with respect to its Fitting height which is either $2$ or $3$, $G$ has one of the %we conclude that $h(G)\geq 2$. On the other hand, from a theorem of Garrison~\cite[Corollary 12.21]{IS}, we conclude that $h(G)\leq 3$.
structures explained in~\cite[Lemma 4.1]{L1998}. By~\cite[Lemma 4.1, Theorem 4.5]{ML2} we have $h(G)=2$ if and only if $G$ is a group of type $1$. While $h(G)=3$, ~\cite[Lemma 4.1(a-iii)]{L1998} implies that $\F G$ is abelian and in particular $G$ has no non-abelian normal Sylow subgroup. Hence by Remark~\ref{rem:20} $G$ is either of Lewis type $4$ or $5$. Suppose $G$ is of Lewis type $5$. Considering the notations in ~\cite[Lemma 3.5]{ML2}, we deduce that $\{1,2,2^{a}+1\}\subseteq\cd(G)$, $\cd(G|Q')\neq\emptyset$ as $Q$ is non-abelian, and $\cd(G|Q')$ contains powers of $2$ that are divisible by $2^a$. Hence $a=1$, $\cd(G|Q')=\{2\}$, and $\cd(G)=\{1,2,3\}$ which is not the case. Thus in this case $G$ is not of type $5$, so it is of type four.

If $B(G)$ is the last graph in Figure~\ref{fig: 6}, then $\Gamma(G)$ has no isolated vertices and hence~\cite[Theorem 5.2]{ML2} implies that $G$ has a non-abelian normal Sylow subgroup. Now Remark~\ref{rem:20} verifies that $G$ is either a group of type $1$ or $6$. The case of Lewis type $6$ is excluded by Lemma~\ref{lemma:new1}.

 Finally, if $B(G)$ is the second graph in Figure~\ref{fig: 5}, then $G$ is either a group of type $1$, $4$, $5$ or $6$. The case of Lewis type $6$ is excluded by Lemma~\ref{lemma:new1}.

We have summarized  this remark in Table~\ref{tab:TTCR}.
}
\end{remark}

\begin{table}[ht]
\caption{Lewis types when $B(G)$ is a union of paths}
\centering
\begin{tabular}{|c|c|l|}\hline
Graph & Types & Examples \\
\hline
nr~1 Figure~~\ref{fig: 4}&1,4&$\mathtt{SmallGroup}(24,3)$ has type $1$\\
&&$\mathtt{PrimitiveSolvablePermGroup}(2,2,1,1)$ has type $4$\\
\hline
nr~2 Figure~~\ref{fig: 4}&1,2,5&$\mathtt{SmallGroup}(288,860)$ has type $1$\\
&&$\mathtt{PrimitiveGroup}(9,6)$ has type $2$\\
&&$\mathtt{SmallGroup}(48,28)$ has type $5$\\
\hline
nr~3 Figure~~\ref{fig: 4}&1&A family of examples are constructed in~\cite[Theorem 2.3]{LL}, the smallest arises by taking\\
&&(using the notation in~\cite[Theorem~$2.3$]{LL}) $p=3$, $a=2$ and $b=4$\\
\hline
nr~1 Figure~~\ref{fig: 5}&1,4&The group $PH$ in ~\cite[Example 3.4]{N} where $|\pi(k)|= 2$ is of Lewis type $1$\\
&&$\mathtt{PrimitiveSolvablePermGroup}(4,2,2,2)$ has type $4$\\
\hline
nr~2 Figure~~\ref{fig: 5}&1,4,5&See Example~\ref{exa:newnewnew} for a group of Lewis type $1$\\
&&$\mathtt{PrimitiveSolvablePermGroup}(4,2,1,4)$ has type $4$\\
&&See Example~\ref{exa:newnew} for a group of Lewis type $5$\\
\hline
nr~3 Figure~~\ref{fig: 5}&1,6&See Example~\ref{exa:new} for a group of Lewis type $1$\\
&&$\mathtt{SmallGroup}(1344,816)$ has type $6$\\
\hline
nr~1 Figure~~\ref{fig: 6}&1&A family of examples of type $1$ are constructed in~\cite[Theorem 2.3]{LL}, the smallest arises\\
&&by taking (using the notation in~\cite[Theorem~$2.3$]{LL}) $p=5$, $a=12$ and $b=24$\\
\hline
nr~2 Figure~~\ref{fig: 6}&1&$\mathtt{SmallGroup}(1920,240059)$ has type $1$ and $\cd(G)^*=\{3,5,8,15\}$\\
\hline
nr~3 Figure~~\ref{fig: 6}&1& For each three distinct primes $q$, $r$, and $s$, where $q\equiv 3\pmod 4$ and $q\equiv 1 \pmod{rs}$, \\&&there exists a solvable group $G$ with $\cd(G)=\{1,r,s,rs,q^{4},q^{5}\}$, see~\cite[Section~4]{Benjamin}.\\
&&This group has cardinality $q^{12}rs$.\\
\hline
\end{tabular}
 \label{tab:TTCR}
\end{table}

\begin{figure}
\begin{tikzpicture}
  [scale=.8,auto=left,every node/.style={circle,fill=blue!20}]

 \node (m2) at (2,1)  {};
  \node (m3) at (3,1)  {};
  \node[fill=blue] (m4) at (2,3)  {};
  \node[fill=blue] (m5) at (3,3)  {};

  \foreach \from/\to in {m2/m4,m3/m5}
    \draw (\from) -- (\to);
%\draw (n1) to [out=-20,in=35] (n3);
\end{tikzpicture}
\quad\quad\quad
\begin{tikzpicture}
  [scale=.8,auto=left,every node/.style={circle,fill=blue!20}]
  %\node (n6) at (1,10) {6};
  \node (m2) at (2,1)  {};
  \node[fill=blue] (m3) at (3,3)  {};
  \node[fill=blue] (m4) at (1,3)  {};
  \node (m5) at (3,1)  {};
  \node[fill=blue] (m6) at (4,3)  {};

  \foreach \from/\to in {m2/m3,m2/m4,m5/m6}
    \draw (\from) -- (\to);
%\draw (n1) to [out=-20,in=35] (n3);
\end{tikzpicture}
\quad\quad\quad
\begin{tikzpicture}
  [scale=.8,auto=left,every node/.style={circle,fill=blue!20}]
  %\node (n6) at (1,10) {6};
  \node (m2) at (2,1)  {};
  \node[fill=blue] (m3) at (3,3)  {};
  \node[fill=blue] (m4) at (1,3)  {};
  \node (m5) at (5,1)  {};
  \node[fill=blue] (m6) at (4,3)  {};
  \node[fill=blue] (m7) at (6,3)  {};

  \foreach \from/\to in {m2/m3,m2/m4,m5/m6,m5/m7}
    \draw (\from) -- (\to);
%\draw (n1) to [out=-20,in=35] (n3);
\end{tikzpicture}
\caption{$B(G)$ as a union of paths with $|\rho(G)|=2$}
\label{fig: 4}
\end{figure}
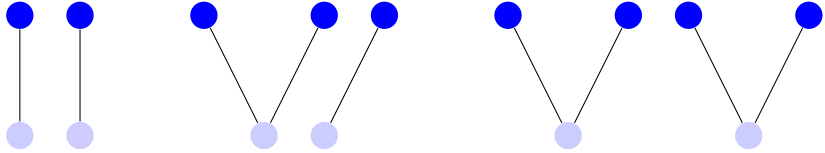

\begin{figure}
\begin{tikzpicture}
  [scale=.8,auto=left,every node/.style={circle,fill=blue!20}]
  %\node (n6) at (1,10) {6};
  \node (m2) at (2,1)  {};
  \node (m3) at (3,1)  {};
  \node[fill=blue] (m4) at (3,3)  {};
  \node[fill=blue] (m5) at (4,3)  {};
  \node (m6) at (5,1)  {};

  \foreach \from/\to in {m2/m4,m3/m5,m5/m6}
    \draw (\from) -- (\to);
%\draw (n1) to [out=-20,in=35] (n3);
\end{tikzpicture}
\quad\quad\quad
\begin{tikzpicture}
  [scale=.8,auto=left,every node/.style={circle,fill=blue!20}]
  %\node (n6) at (1,10) {6};
  \node (m2) at (2,1)  {};
  \node[fill=blue] (m3) at (1,3)  {};
  \node[fill=blue] (m4) at (3,3)  {};
  \node (m6) at (3,1)  {};
  \node[fill=blue] (m7) at (4,3)  {};
  \node (m8) at (5,1)  {};

  \foreach \from/\to in {m2/m3,m2/m4,m6/m7,m7/m8}
    \draw (\from) -- (\to);
%\draw (n1) to [out=-20,in=35] (n3);
\end{tikzpicture}
\quad\quad\quad
\begin{tikzpicture}
  [scale=.8,auto=left,every node/.style={circle,fill=blue!20}]
  %\node (n6) at (1,10) {6};
  \node (m2) at (8,1)  {};
  \node[fill=blue] (m3) at (7,3)  {};
  \node[fill=blue] (m4) at (9,3)  {};
  \node (m5) at (10,1)  {};
  \node (m6) at (11,1)  {};
  \node[fill=blue] (m7) at (10,3)  {};

  \foreach \from/\to in {m2/m3,m2/m4,m5/m4,m6/m7}
    \draw (\from) -- (\to);
%\draw (n1) to [out=-20,in=35] (n3);
\end{tikzpicture}
\caption{$B(G)$ is a union of paths with $|\rho(G)|=3$, and $|\cd(G)^{*}|\leq 3$}
\label{fig: 5}
\end{figure}

\begin{figure}
  \begin{tikzpicture}
  [scale=.8,auto=left,every node/.style={circle,fill=blue!20}]
  %\node (n6) at (1,10) {6};
  \node (m2) at (8,1)  {};
  \node[fill=blue] (m3) at (7,3)  {};
  \node[fill=blue] (m4) at (9,3)  {};
  \node[fill=blue] (m5) at (4,3)  {};
  \node (m6) at (5,1)  {};
  \node[fill=blue] (m7) at (6,3)  {};
  \node (m8) at (7,1)  {};

  \foreach \from/\to in {m2/m3,m2/m4,m6/m5,m6/m7,m7/m8}
    \draw (\from) -- (\to);
%\draw (n1) to [out=-20,in=35] (n3);
\end{tikzpicture}
\quad\quad\quad
\begin{tikzpicture}
  [scale=.8,auto=left,every node/.style={circle,fill=blue!20}]
  %\node (n6) at (1,10) {6};
  \node[fill=blue] (m2) at (6,3)  {};
  \node[fill=blue] (m3) at (7,3)  {};
  \node[fill=blue] (m4) at (9,3)  {};
  \node[fill=blue] (m5) at (11,3)  {};
  \node (m6) at (7,1)  {};
  \node (m7) at (8,1)  {};
  \node (m8) at (10,1)  {};

  \foreach \from/\to in {m2/m6,m7/m4,m7/m3,m8/m4,m5/m8}
    \draw (\from) -- (\to);
%\draw (n1) to [out=-20,in=35] (n3);
\end{tikzpicture}
\quad\quad\quad
\begin{tikzpicture}
  [scale=.8,auto=left,every node/.style={circle,fill=blue!20}]
  %\node (n6) at (1,10) {6};
  \node[fill=blue] (m2) at (6,3)  {};
    \node (m6) at (7,1)  {};
     \node[fill=blue] (m9) at (8,3)  {};
  \node[fill=blue] (m4) at (9,3)  {};
  \node[fill=blue] (m5) at (11,3)  {};
  \node (m7) at (10,1)  {};
  \node (m8) at (12,1)  {};
  \node[fill=blue] (m10) at (13,3) {};

  \foreach \from/\to in {m2/m6,m6/m9,m7/m4,m8/m5,m5/m7,m8/m10}
    \draw (\from) -- (\to);
%\draw (n1) to [out=-20,in=35] (n3);
\end{tikzpicture}
  \caption{$B(G)$ is a union of paths with $|\rho(G)|=3$, and $|\cd(G)^{*}|\geq 4$}
  \label{fig: 6}
\end{figure}
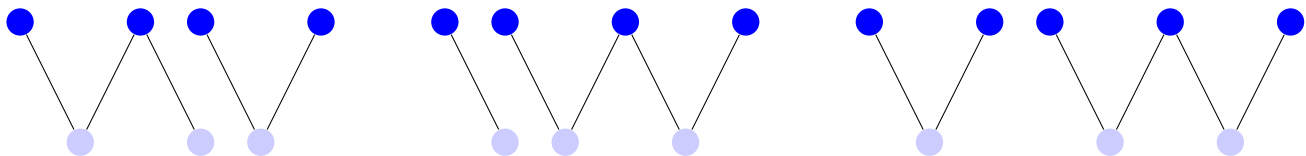
%\begin{problem}
%{\rm Determine whether there exists a group $G$ of Lewis type $6$ with $B(G)$ isomorphic to the first graph in Figure~\ref{fig: 6}.}
%\end{problem}

In analyzing the graphs in this section, the reader should observe how the investigation of $B(G)$
 requires the techniques developed for studying the graphs $\Gamma(G)$ and $\Delta(G)$ and {\em also} some number theoretic (or arithmetic) considerations.

We conclude this section proposing the following problem, which generalizes Question~1 in~\cite{H}. (Recall that we have solved~\cite[Question~1]{H} in Examples~\ref{ex:5} and~\ref{ex:6}.)
\begin{problem}
{\rm Determine all graphs $X$ with no cycles such that there exists a group $G$ with $X\cong B(G)$.}
\end{problem}

\subsection{Cycles and complete bipartite graphs}
Recall (from the introductory section) that Taeri~\cite{T} has proved that the bipartite divisor graph for the set of conjugacy class sizes of a finite group $G$ is a cycle if and only if it is a cycle of length six; moreover this happens if and only if $G\cong  A\times \mathrm{SL}_2(q)$, for some abelian group $A$ and some $q\in \{4,8\}$. The situation is very different and much more rich for irreducible character degrees. From ~\cite[Section 6]{LM} we know that, for every pair of odd primes $p$ and $q$ such that $p$ is congruent to $1$ modulo $3$ and $q$ is a  divisor of $p+1$, there exists a solvable group $G$ such that $\cd(G)=\{1,3q,p^{2}q,3p^{3}\}$.
This gives an example of a solvable group $G$ with $B(G)$ a cycle of length $6$.
On the other hand, among groups of order $588$, there are exactly two groups  $G$  with $B(G)$ a  cycle of length four. These groups have $\cd(G)=\{1,6,12\}$.

In ~\cite{H}, it is shown that, if $G$ is a finite group with $B(G)$  a cycle of length $n\geq 6$, then $\Delta(G)$ and $\Gamma(G)$ are cycles. This fact yields the following theorem.
\begin{theorem}[{{See~\cite[Theorem~4.5]{H}}}]~\label{thm:55}
Let $G$ be a finite group with $B(G)$ a cycle of length $n$. Then $n\in\{4,6\}$, $G$ is solvable, and $dl(G)\leq |\cd(G)|\leq 4$. In particular, if $B(G)$ is a cycle of length $4$, then there exists a normal abelian Hall subgroup $N$ of $G$ such that $\cd(G)=\{[G:I_{G}(\lambda)] : \lambda\in \Irr(N)\}$.
\end{theorem}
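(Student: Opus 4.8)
The plan is to exploit the structural link between $B(G)$ and the pair $(\Delta(G),\Gamma(G))$ established in the introduction, and then to bring in the known classification of groups with few character degrees. First I would record the elementary graph-theoretic fact (already observed in the excerpt for paths, and equally easy for cycles) that if $B(G)$ is a cycle $C_n$, then $B(G)$ is connected, $|\rho(G)|=|\cd(G)^*|=n/2$, and every vertex of $B(G)$ has valency exactly $2$; projecting the cycle onto each side of the bipartition shows that $\Gamma(G)$ and $\Delta(G)$ are each a cycle of length $n/2$ when $n/2\ge 3$, while for $n=4$ one gets $\Gamma(G)=\Delta(G)=K_2$. Since a cycle must have length at least $3$ and $n$ is even, the only candidates are $n=4$ and $n=6$ unless $n/2\ge 3$ forces $\Gamma(G)$ to be a genuine cycle $C_{n/2}$. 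The key input here is the theorem of Lewis (invoked in the excerpt, cf.~\cite{L}) bounding the structure of $\Gamma(G)$: when $\Gamma(G)$ is a cycle, $|\cd(G)^*|=n/2$ must be small. Concretely, one uses that $\Delta(G)$ being a cycle of length $\ge 4$ is impossible for character degree graphs by the Pálfy-type and Manz--Wolf bounds on $|\rho(G)|$ and on the structure of $\Delta(G)$ for solvable and non-solvable groups — $\Delta(G)$ has at most $4$ vertices and cannot itself be a $4$-cycle or longer cycle — which kills $n\ge 8$ and leaves only $n=4$ and $n=6$.

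Next I would establish solvability. If $B(G)$ is a cycle then it is connected, and by Theorem~\ref{thm:990} (or rather its underlying dichotomy: a non-solvable group has disconnected $B(G)$, as recalled at the start of the proof of Theorem~\ref{thm:99}) connectedness of $B(G)$ forces $G$ to be solvable. Alternatively, one invokes directly that a non-solvable group with $|\cd(G)|\le 4$ has $B(G)$ disconnected (Theorem~\ref{thm:99}(i)--(iv) all have $n(B(G))\ge 2$), and $|\cd(G)|\le 4$ holds because $|\cd(G)^*|=n/2\le 3$. Once $G$ is solvable, the bound $dl(G)\le |\cd(G)|$ is the classical Taketa-type inequality $dl(G)\le |\cd(G)|$ for solvable groups, and $|\cd(G)|=n/2+1\le 4$ gives the stated chain $dl(G)\le|\cd(G)|\le 4$.

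Finally, for the case $n=4$: here $\cd(G)^*$ has two elements and $\Gamma(G)=K_2$, so the two non-trivial degrees are not coprime; moreover $|\rho(G)|=2$ and every prime in $\rho(G)$ divides exactly one of the two degrees — wait, in a $4$-cycle each of the two primes has valency $2$, so each prime divides \emph{both} degrees, and dually each degree is divisible by both primes. So $\cd(G)=\{1, p^ap^b\text{-type}\}$ patterns. The claim to prove is that $G$ has a normal abelian Hall subgroup $N$ with $\cd(G)=\{[G:I_G(\lambda)]:\lambda\in\Irr(N)\}$. The natural route is to appeal to the classification of solvable groups whose character degree graph $\Delta(G)$ is a single edge $K_2$ together with the fact that $|\cd(G)|=3$; such groups have been analyzed (the degrees being built from an abelian normal subgroup via Clifford theory and stabilizer indices). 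Concretely I would locate a normal abelian Hall $\pi$-subgroup $N$ (with $\pi=\rho(G)$) by a Hall/Fitting argument: since only two primes appear in the degrees and both appear in every nonlinear degree, Ito--Michler-type and Hall-subgroup considerations produce such an $N$, and then restricting irreducible characters of $G$ to $N$ and applying Clifford's theorem expresses every degree as $[G:I_G(\lambda)]$ times a degree of a character of the stabilizer quotient, which by the smallness of $\cd(G)$ collapses to exactly $[G:I_G(\lambda)]$.

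\medskip

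\noindent\textbf{Main obstacle.} The delicate point is the last part: producing the normal abelian Hall subgroup $N$ and verifying that \emph{every} element of $\cd(G)$ arises as some index $[G:I_G(\lambda)]$ with $\lambda\in\Irr(N)$ — this is not purely formal and really uses the classification of the (solvable) groups with $B(G)\cong C_4$, equivalently with $\Delta(G)\cong K_2$ and $|\cd(G)|=3$, as carried out in the sources cited in the excerpt. Ruling out $n\ge 8$ also requires invoking the rather deep structural restrictions on $\Delta(G)$ (no induced cycles of length $\ge 4$, bounded vertex count), which one must quote rather than reprove.
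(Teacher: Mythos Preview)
Your overall strategy---project $B(G)\cong C_n$ onto the two sides of the bipartition to see that $\Delta(G)$ and $\Gamma(G)$ are themselves cycles (or $K_2$ when $n=4$), and then invoke the known constraints on those graphs---is exactly the route the paper indicates: the sentence immediately preceding the theorem says that in~\cite{H} one shows $\Delta(G)$ and $\Gamma(G)$ are cycles for $n\ge 6$, ``and this fact yields the following theorem.'' So the skeleton is right. The paper gives no further details (the result is quoted from~\cite{H}), so there is nothing more to compare at the level of approach.

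That said, several of your details are wrong or too vague to count as a proof. First, ``$\Delta(G)$ has at most $4$ vertices'' is simply false in general; what you need is a \emph{specific} theorem excluding $\Delta(G)\cong C_m$ for $m\ge 4$ (equivalently $n\ge 8$), and ``P\'alfy-type and Manz--Wolf bounds'' does not pin one down---P\'alfy's three-prime condition, for instance, does not rule out $C_4$ or $C_5$. Second, your solvability argument ``non-solvable $\Rightarrow$ $B(G)$ disconnected'' is false (take $G=\mathrm{Sym}(5)$); Theorem~\ref{thm:990} concerns paths, not cycles. Your alternative via $|\cd(G)|\le 4$ is the right idea, but then you must actually run through the Malle--Moret\'o classification~\cite{GA} and check that none of those groups produces $B(G)\cong C_6$. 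Third, $dl(G)\le |\cd(G)|$ is \emph{not} a classical theorem for solvable groups---it is the Taketa/Isaacs--Seitz conjecture; it happens to be known for $|\cd(G)|\le 4$, but you should say so. Finally, in the $n=4$ case you have the Hall subgroup backwards: since every nonlinear degree is divisible by \emph{both} primes in $\rho(G)$, Thompson's theorem gives a normal $\rho(G)$-\emph{complement} $N$, and that $\rho(G)'$-Hall subgroup is the abelian normal Hall subgroup in the statement---not a $\rho(G)$-Hall subgroup as you wrote.
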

Since the cycle of length four is also the complete bipartite graph $K_{2,2}$, it seems natural to discuss here also the case $B(G)\cong  K_{m,n}$, for some positive integers $m\geq 2$ and $n\geq 2$. When $B(G)$ is complete bipartite, the graphs $\Delta(G)$ and $\Gamma(G)$ are both complete. Therefore, by ~\cite[Theorem 7.3]{L} or~\cite[Main Theorem]{BCLP}, we deduce that $G$ is solvable. The best  structural result on $G$ is given by Moosavi~\cite{mus}.
\begin{theorem}[{{See~\cite{mus}}}]
Let $G$ be a finite group with $B(G)$ complete bipartite. Then $G=AH$, where $A$ is an abelian normal Hall subgroup of $G$ and $H$ is either abelian or a non-abelian $p$-group for some prime $p$.
\end{theorem}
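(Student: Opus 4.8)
The plan is to exploit the fact, already noted in the excerpt, that when $B(G)$ is complete bipartite both $\Delta(G)$ and $\Gamma(G)$ are complete graphs, and in particular $G$ is solvable. First I would deal with the degenerate case $|\cd(G)^*|=1$: here $G$ has a unique non-trivial character degree, so by the classification of such groups (Isaacs, \cite[Chapter~12]{IS}, and \cite{BH}) the conclusion that $G=AH$ with $A$ an abelian normal Hall subgroup and $H$ abelian or a $p$-group is immediate. So we may assume $|\cd(G)^*|\ge 2$, which forces $B(G)\cong K_{m,n}$ with $m\ge 2$ (the light blue side $\rho(G)$) and $n\ge 2$.

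The heart of the argument is the observation that $B(G)\cong K_{m,n}$ means: \emph{every} prime in $\rho(G)$ divides \emph{every} degree in $\cd(G)^*$. Thus every non-linear irreducible character of $G$ is divisible by some fixed prime $p\in\rho(G)$, so Thompson's theorem (\cite[(12.2)]{IS}) gives $G$ a normal $p$-complement; iterating over all primes in $\rho(G)$, or more efficiently taking the product $\pi=\rho(G)$ and noting every non-linear degree is divisible by $\prod_{p\in\rho(G)}p$, one gets that $G$ has a normal Hall $\pi'$-subgroup, call it $A$, where $\pi'$ is the complement of $\rho(G)$ in $\pi(G)$. Then $G/A$ is a $\pi$-group and $G = A\rtimes H$ for a Hall $\pi$-subgroup $H$ (Schur--Zassenhaus, since $A$ is normal Hall). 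The next step is to show $A$ is abelian: by Clifford theory, for $\theta\in\Irr(A)$ any $\chi\in\Irr(G)$ lying over $\theta$ has degree $\chi(1)=e t\,\theta(1)$ with $e,t$ dividing $|H|$, hence $\pi$-numbers; since $\theta(1)$ is a $\pi'$-number and $\chi(1)\in\cd(G)$, comparing $\pi'$-parts forces $\theta(1)=1$ for all $\theta\in\Irr(A)$, i.e. $A$ is abelian.

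It remains to analyze $H$, and this is where I expect the main obstacle to lie. We know $\cd(H)\subseteq\{m_{\pi}: m\in\cd(G)\}$ and, because $B(G)$ is complete bipartite on $\rho(G)$, every non-linear degree of $H$ must again be divisible by every prime of $\rho(G)$; equivalently $\Delta(H)$ is complete and $B(H)$ is complete bipartite (or $H$ is abelian). If $|\rho(H)|=|\rho(G)|\ge 2$, one wants to conclude $H$ itself is a $p$-group, which is false in general for groups with complete $\Delta$ --- so the real content is that the \emph{complete bipartite} condition on $B(G)$ is far more restrictive than completeness of $\Delta(G)$. The key extra input is a structural theorem on groups all of whose non-linear character degrees share a common prime divisor together with the rigidity of the bipartite structure; the plan is to invoke the classification of solvable groups with $\Delta$ complete (via \cite{BCLP} / \cite[Theorem~7.3]{L}) and then rule out, one by one, the configurations in which $H$ is not a $p$-group by producing a non-linear degree missing one of the primes of $\rho(G)$ --- contradicting completeness of $B(H)$ on the bipartite side. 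Combining this with the already-established facts that $A$ is an abelian normal Hall subgroup and $G=AH$ yields the theorem; the delicate point throughout is bookkeeping the $\pi$- and $\pi'$-parts of degrees so that the bipartite completeness is used, not merely the completeness of $\Delta$ and $\Gamma$.
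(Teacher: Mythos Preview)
The paper does not prove this theorem at all: it is simply quoted from Moosavi's paper~\cite{mus} with no argument given. So there is no ``paper's own proof'' to compare against, and your proposal should be assessed on its own merits.

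Your reduction is correct up to and including the construction of the abelian normal Hall $\pi'$-subgroup $A$ (with $\pi=\rho(G)$) via Thompson's theorem and the Clifford-theoretic argument that $A$ is abelian. The gap is exactly where you flag it: the analysis of $H$. Your plan to ``invoke the classification of solvable groups with $\Delta$ complete'' and ``rule out configurations one by one'' is unnecessarily vague, and in fact the finish is much cleaner than you suggest. Since $H\cong G/A$, you have $\cd(H)\subseteq\cd(G)$ (not merely the $\pi$-parts), so every non-linear degree of $H$ is divisible by \emph{every} prime of $\rho(G)=\pi(H)$. Thompson's theorem, applied now to $H$ for each prime $p\in\pi(H)$, shows that $H$ has a normal $p$-complement for every prime $p$ dividing $|H|$; hence $H$ is $p$-nilpotent for all $p$, i.e.\ $H$ is nilpotent. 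Write $H=P_1\times\cdots\times P_m$ as the direct product of its Sylow subgroups. If $m=|\rho(G)|=1$ then $H$ is a $p$-group and you are done. If $m\ge 2$ and some $P_i$ is non-abelian, then $P_i$ contributes a non-linear character of pure $p_i$-power degree to $\cd(H)\subseteq\cd(G)$, contradicting the complete-bipartite hypothesis; so every $P_i$ is abelian and $H$ is abelian.

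In short: you had already used Thompson's theorem once to produce $A$; the step you are missing is to use it a second time, on $H$ itself, to force $H$ nilpotent. After that the conclusion is immediate, with no case analysis from~\cite{BCLP} or~\cite{L} required.
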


 Here we observe that there exist groups $G$ where $B(G)$ is an arbitrary complete bipartite graph. The analogous problem for the bipartite divisor graph for the set of  conjugacy class sizes seems  considerably harder; it is widely open and it is stated in~\cite{HSpiga}.
\begin{proposition}\label{prop:2^m-1}For every positive integers $m$ and $k$, there exists a group $G$ with $B(G)\cong K_{m,k}$.
\end{proposition}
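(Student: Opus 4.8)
The plan is to exhibit $G$ as an iterated direct power of a small metacyclic group. First I would reduce the statement to the following assertion: \emph{for every $m\ge 1$ there is a finite group $A$ with $\cd(A)=\{1,n\}$, where $n$ is a squarefree integer having exactly $m$ prime divisors.} Granting this, set $G:=A\times\cdots\times A$ ($k$ copies). Since the irreducible characters of a direct product are the outer tensor products of the irreducible characters of the factors, $\cd(G)=\{\,n^{j}\mid 0\le j\le k\,\}=\{1,n,n^{2},\dots,n^{k}\}$, so $\cd(G)^{*}=\{n,n^{2},\dots,n^{k}\}$ has exactly $k$ elements. Every element of $\cd(G)^{*}$ is a power of $n$, hence is divisible by exactly the $m$ primes dividing $n$ and by no other prime; in particular $\rho(G)=\pi(n)$ has exactly $m$ elements and, in $B(G)$, every vertex of $\rho(G)$ is joined to every vertex of $\cd(G)^{*}$. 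Thus $B(G)\cong K_{m,k}$, as wanted (the roles of $m$ and $k$ being interchangeable since $K_{m,k}\cong K_{k,m}$).

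To produce $A$, fix distinct primes $p_{1},\dots,p_{m}$, put $n:=p_{1}\cdots p_{m}$, and use Dirichlet's theorem on primes in arithmetic progression to pick a prime $q$ with $q\equiv 1\pmod n$. Then $\mathrm{Aut}(\mathbb{Z}_{q})$ is cyclic of order $q-1$, hence contains a subgroup of order $n$, so $\mathbb{Z}_{n}$ acts faithfully on $\mathbb{Z}_{q}$; let $A:=\mathbb{Z}_{q}\rtimes\mathbb{Z}_{n}$ with respect to this action. Applying Clifford theory to the normal abelian subgroup $N:=\mathbb{Z}_{q}$: the characters of $A$ lying over $1_{N}$ are precisely the $n$ linear characters inflated from $A/N\cong\mathbb{Z}_{n}$, while the faithfulness of the action forces $\mathbb{Z}_{n}$ to act freely on $\Irr(N)\setminus\{1_{N}\}$, so each of the $(q-1)/n$ non-trivial $\mathbb{Z}_{n}$-orbits has trivial inertia group in $A$ and induces an irreducible character of $A$ of degree $[A:N]=n$. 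Hence $\cd(A)=\{1,n\}$ and $\rho(A)=\pi(n)$, as required.

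I do not anticipate a real obstacle here: the only computations involved are the standard formula for $\cd$ of a direct product and the Clifford-theoretic computation of $\cd(A)$, and the edge cases $m=1$ or $k=1$ are covered verbatim by the same formulas. One could alternatively obtain the group $A$ by quoting the classification of the finite groups with a single non-trivial irreducible character degree (cf.\ the references mentioned after Theorem~\ref{thm:990}), but the explicit semidirect product above keeps the argument self-contained.
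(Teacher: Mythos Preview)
Your proof is correct and is essentially identical to the paper's: both construct the Frobenius group $\mathbb{Z}_q\rtimes\mathbb{Z}_n$ (with $n=p_1\cdots p_m$ and $q\equiv 1\pmod n$ a prime supplied by Dirichlet) to obtain a group with $\cd=\{1,n\}$, and then take its $k$-fold direct power to get $\cd(G)=\{1,n,\dots,n^k\}$ and hence $B(G)\cong K_{m,k}$. You give a bit more detail on the Clifford-theoretic computation of $\cd(A)$, but the construction and the argument are the same.
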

\begin{proof}
Let $m$ be a positive integer and let $p_1,\ldots,p_m$ be $m$ distinct prime numbers. Set $n:=p_1\cdots p_m$. From Dirichlet's theorem on primes in arithmetic progression, there exists a prime $p$ with $p\equiv 1\pmod n$. Let $p$ be one of these primes and let $P$ be a cyclic group of order $p$. Next, let $\alpha$ be an automorphism of $P$ of order $n$ and set $H:=\langle P,\alpha\rangle$. Clearly, $H$ is a Frobenius group of order $np$, with cyclic Frobenius complement $\langle \alpha\rangle$, with cyclic Frobenius kernel $P$ and with $\cd(H)=\{1,n\}$.

Let $k$ be a positive integer and let $G:=H^k$ be the Cartesian product of $k$ copies of $H$. Clearly, $$\cd(G)=\{1,n,n^2,\ldots,n^k\}$$
and hence $B(G)$  is the complete bipartite graph $K_{m,k}$.
\end{proof}

\section{Regular Bipartite Divisor Graph}\label{sec:reg}
A graph is said to be $k$-regular if each of its vertices has valency $k$.
Since cycles are $2$-regular connected graphs, the investigation of groups $G$ with $B(G)$ a cycle has inspired the investigation~\cite{Hregular} of groups $G$ where $B(G)$ is $k$-regular. It is
clear that $0$-regular graphs (that is, empty graphs)  play no role in the study of bipartite divisor graphs. So we  start by discussing the influence
of $1$-regularity of $B(G)$ (that is, $B(G)$ is a complete matching) on the group structure of $G$. (Theorem~\ref{thm: reg} is a refinement of~\cite[Theorem~$2.1$]{Hregular},  where we have improved its statement by taking into account~\cite{BH}.)

\begin{theorem}[{{See~\cite{Hregular}}}]~\label{thm: reg}
Let $G$ be a finite group with $B(G)$ $1$-regular. Then one of the following  occurs:
\begin{itemize}
\item[(1)]$G$ is non-solvable,  $B(G)=K_2+K_2+K_2$, $G\cong  A\times \mathrm{PSL}_2(2^{n})$, where $A$ is abelian and $n\in\{2,3\}$;
\item[(2)]$G$ is solvable and one of the following cases holds:
\begin{itemize}
\item[(i)]$B(G)\cong K_2$ and $\cd(G)=\{1,p^\alpha\}$, for some prime $p$ and some positive integer $\alpha$. Moreover, either
\begin{itemize}
\item[(a)]$G\cong  P\times A$, where $P$ is a non-abelian $p$-group and $A$ is abelian, or
\item[(b)]$\alpha=1$, $\F G$ is abelian and $|G:\F G|=p$, or
\item[(c)]$G'\cap \Z G=1$ and $G/\Z G$ is a Frobenius group with kernel $(G'\times \Z G)/\Z G$ and cyclic complement of order $p^\alpha=|G:G'\times \Z G|$.
\end{itemize}
    \item[(ii)]$B(G)\cong K_2+K_2$, $h(G)\in\{2,3\}$ and $G$, with respect to its Fitting height, has one of the two structures mentioned in~\cite[Lemma 4.1]{L1998}. In particular:
        \begin{itemize}
        \item[(a)] If $h(G)=3$, then $\cd(G)=\{1,[G:{\bf F}_{2}(G)],[{\bf F}_{2}(G):\F G]\}$, where $[G:{\bf F}_{2}(G)]$ is a prime $s$ and ${\bf F}_{2}(G)/\F G$ is a cyclic $t$-group for some  prime $t\neq s$. Moreover, $G$ has Lewis type $4$.
        \item[(b)] If $h(G)=2$, then $\cd(G)=\{[G:\F G]\}\cup \cd(\F G)$, where $G/\F G$ is a cyclic $t$-group for some prime $t$ and $|\cd(\F G)|=2$. Moreover, $G$ has Lewis type $1$.
        \end{itemize}
\end{itemize}
\end{itemize}
\end{theorem}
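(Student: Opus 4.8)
The plan is to treat the statement as an assembly of three classification theorems, organised by the number of connected components of $B(G)$. First I would translate $1$-regularity into arithmetic: since $B(G)$ is bipartite with parts $\rho(G)$ and $\cd(G)^*$, it is $1$-regular precisely when every member of $\cd(G)^*$ is a prime power and no prime divides two distinct members of $\cd(G)^*$; equivalently $\cd(G)^*=\{p_1^{a_1},\dots,p_k^{a_k}\}$ with $p_1,\dots,p_k$ pairwise distinct primes, so that $B(G)\cong K_2+\cdots+K_2$ ($k$ copies), $|\cd(G)|=k+1$, and both $\Delta(G)$ and $\Gamma(G)$ are edgeless on $k$ vertices; in particular $n(B(G))=n(\Delta(G))=k$. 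The case $k=0$ is the empty graph and is excluded, so $k\ge 1$.

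Next I would dispose of the non-solvable case. If $G$ is non-solvable then $B(G)$, being a disjoint union of copies of $K_2$, is a union of paths, so Theorem~\ref{thm:99} applies; comparing the constraints $n(B(G))=k$ and $|\cd(G)|=k+1$ with cases~(i)--(iv) of that theorem, only case~(iv) is consistent, which forces $k=3$ and $G\cong \mathrm{PSL}_2(2^n)\times A$ with $A$ abelian and $\cd(G)=\{1,2^n,2^n-1,2^n+1\}$. It remains to decide for which $n$ this is $1$-regular: the three non-trivial degrees are pairwise coprime and $2^n$ is a prime power, so the requirement is that $2^n-1$ and $2^n+1$ both be prime powers. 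A short number-theoretic argument of the type used in the proof of Lemma~\ref{lemma:new} --- factoring $2^n-1$ when $n$ is even, using $3\mid 2^n+1$ when $n$ is odd, and invoking Zsigmondy's theorem~\cite{zsigmondy} for the remaining cases --- shows this happens exactly for $n\in\{2,3\}$. This is case~(1), and conversely $\mathrm{PSL}_2(4)$ and $\mathrm{PSL}_2(8)$ (times an abelian group) realise $B(G)=K_2+K_2+K_2$.

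From now on $G$ is solvable. If $\Delta(G)$ is connected then $k=1$, so $\cd(G)=\{1,p^\alpha\}$ and $B(G)\cong K_2$; here I would invoke the classification of the finite groups having a unique non-trivial character degree, that degree being a prime power --- precisely the content of~\cite{BH}, which refines~\cite[Theorem~2.1]{Hregular} and the older literature --- to obtain the trichotomy (a)/(b)/(c) of part~(2)(i). If $\Delta(G)$ is disconnected then, $G$ being solvable, Remark~\ref{rem:20} gives exactly two components, so $k=2$ and $\cd(G)=\{1,p^\alpha,q^\beta\}$ with $p\ne q$. Since $\cd(G)^*$ is a pair of coprime integers, the classification of solvable groups with such character-degree sets (Lewis,~\cite{L1998}) yields $h(G)\in\{2,3\}$ and the two structures of part~(2)(ii). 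For the Lewis type: when $h(G)=2$, the fact $|\cd(\F G)|=2$ forces $\F G=R\times B$ with $R$ a non-abelian Sylow subgroup of $\F G$ and $B$ abelian; as $R$ is characteristic in $\F G$ it is a non-abelian normal Sylow subgroup of $G$, so by Remark~\ref{rem:20} only types $1$ and $6$ remain, and type $6$ is ruled out by $h(G)<3$, whence $G$ has type $1$. When $h(G)=3$,~\cite{L1998} gives $\F G$ abelian, so $G$ has no non-abelian normal Sylow subgroup and Remark~\ref{rem:20} leaves types $2,3,4,5$; types $2$ and $3$ are excluded by $|\cd(G)|=3$, and type $5$ would force $\cd(G)=\{1,2,2^a+1\}$, which by the analysis of type-$5$ groups carried out around Table~\ref{tab:TTCR} collapses to $\cd(G)=\{1,2,3\}$ --- a degree set realised by the affine (type $4$) group $\mathrm{Sym}(4)$ and which a direct inspection of~\cite{ML2} shows is never of type $5$. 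Hence $G$ has type $4$, and conversely each configuration in~(2) is immediately seen to give a $1$-regular $B(G)$.

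The step I expect to demand the most care is the Lewis-type bookkeeping in part~(2)(ii), and above all the exclusion of type $5$ when $h(G)=3$: the borderline degree set $\{1,2,3\}$ is genuinely possible (it occurs for $\mathrm{Sym}(4)$), so one cannot dismiss it on degree-count grounds alone and must actually unpack the internal description of Lewis's type-$5$ groups from~\cite{ML2}. The arithmetic in case~(1) (namely $2^n\pm 1$ both prime powers $\Rightarrow n\in\{2,3\}$) is routine but should be written out carefully. Apart from these points, the proof is a matter of correctly stitching together the three external classifications --- Theorem~\ref{thm:99} for the non-solvable case,~\cite{BH} for $\cd(G)=\{1,p^\alpha\}$, and~\cite{L1998} for two coprime degrees --- with Remark~\ref{rem:20}.
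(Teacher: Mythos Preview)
Your plan matches the paper's approach closely: both defer most of the statement to the external classifications (\cite{Hregular} and \cite{BH}), with the genuinely new content being the Lewis-type determination in part~(2)(ii), and you have correctly located the sticking point as the exclusion of type~$5$ when $h(G)=3$ and $\cd(G)=\{1,2,3\}$.

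One point to sharpen: your write-up reduces type~$5$ to the degree set $\{1,2,3\}$ and then says a ``direct inspection of~\cite{ML2}'' rules it out. The paper does not find this sufficient. After reaching $\cd(G)=\{1,2,3\}$ it instead invokes Noritzsch's classification~\cite[Theorem~3.5]{N} of groups with two coprime non-trivial degrees: since $|G:\F G|=6$, part~(1) of that theorem applies and forces $\F G$ to be abelian, whence the $2$-group $Q$ in the type-$5$ description is abelian, contradicting $Q'\ne 1$. So the decisive external input at this step is~\cite{N}, not~\cite{ML2} alone; you should cite it explicitly. Conversely, for $h(G)=2$ your argument via a non-abelian normal Sylow subgroup is correct but unnecessary: Remark~\ref{rem:20} already records that type~$1$ is the unique Lewis type with Fitting height~$2$, so $h(G)=2$ gives type~$1$ immediately.

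A minor stylistic difference: the paper simply cites~\cite{Hregular} for the non-solvable case~(1), whereas you rederive it from Theorem~\ref{thm:99} plus the arithmetic on $2^n\pm 1$. Your route is more self-contained and perfectly fine; just be aware that the paper treats this part as already established.
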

\begin{proof}
Except for the fact that the groups in (2iia) are of Lewis type $4$ and the groups in (2iib) are of Lewis type $1$, the result follows immediately from~\cite[Theorem~$2.1$]{Hregular}  and using the main result of~\cite{BH} (when $n(B(G))=1$).

Suppose than that $G$ satisfies $B(G)=K_2+K_2$, $h(G)=3$, $\cd(G)=\{1,[G:{\bf F}_{2}(G)],[{\bf F}_{2}(G):\F G]\}$, where $[G:{\bf F}_{2}(G)]$ is a prime $s$ and ${\bf F}_{2}(G)/\F G$ is a cyclic $t$-group for some  prime $t\neq s$.  If follows readily from the description of the Lewis types and Remark~\ref{rem:20} that $G$ has type $4$ or $5$. Suppose that $G$ has type $5$. (We use the notation in~\cite[Lemma~$3.5$]{ML2}.) From~\cite[Lemma~$3.5$~(iii)]{ML2}, we deduce $2,2^a+1\in\cd(G)$. Moreover, from~\cite[Lemma~$3.5$~(iv)]{ML2}, we deduce that either $\cd(G|Q')=\emptyset$ or $\cd(G|Q') $ contains powers of $2$ that are divisible by $2^a$. Assume first that $\cd(G|Q')=\emptyset$. This means that every irreducible character of $G$ contains $Q'$ in its kernel, but this is clearly a contradiction because $Q'\ne 1$. Assume now that $\cd(G|Q')\ne\emptyset$. As $|\rho(G)|=2$, we must have $\rho(G)=\{2,2^a+1\}$ and hence $\cd(G|Q')=\{2\}$ and $a=1$. Therefore, $\cd(G)=\{1,2,3\}$. At this point to conclude we invoke~\cite[Theorem~$3.5$]{N}, which classifies the groups $X$ with $\cd(X)=\{1,m,n\}$ and $\gcd(m,n)=1$. Since $|G:\F G|=2\cdot 3=6$, we deduce that part~(1) of~\cite[Theorem~$3.5$]{N} holds. We infer that $\F G$ is abelian and hence so is $Q$, but this contradicts the description of the groups of type $5$.

Finally suppose that $G$ satisfies $B(G)=K_2+K_2$ and $h(G)=2$. Then $G$ is of Lewis type $1$ by Remark~\ref{rem:20}.
\end{proof}

The groups described in (1) and in (2i) are clear and, for each of these cases, there exists a group $G$ with $B(G)$ a complete matching.  Now, $\mathtt{SmallGroup}(320,1012)$ provides an example satisfying (2iib). The groups in~(2iia) must be of type $4$ in Lewis' sense and examples  occur plentiful ($\mathrm{Sym}(4)$ has type $4$ and $B(\mathrm{Sym}(4))=K_2+K_2$).

Let $G$ be a finite group with $B(G)$ a connected $2$-regular graph. As a connected $2$-regular graph is a cycle, by Theorem~\ref{thm:55}, $G$ is solvable with $dl(G)\leq 4$ and $B(G)$ is  a cycle of length four or six. The following theorem shows that  $B(G)$ cannot be a disconnected $2$-regular graph.

\begin{theorem}[{{See~\cite[Theorem~$3.2$ and Corollary~$3.3$]{Hregular}}}]\label{thm: 51}
Suppose that $G$ is a group with $B(G)$  $2$-regular. Then $G$ is solvable, $B(G)$ is connected and $B(G)$ is a cycle of length four or six. In particular, if $\diam(B(G))=2$, then there exists a normal abelian Hall subgroup $N$ of $G$ such that $\cd(G)=\{[G:I_{G}(\lambda)] : \lambda\in \mathrm{Irr}(N)\}$.
\end{theorem}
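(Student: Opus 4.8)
The plan is to deduce everything from Theorem~\ref{thm:55} once we have shown that a $2$-regular $B(G)$ is necessarily connected. Indeed, a connected $2$-regular bipartite graph (and $B(G)$ is always bipartite) is a cycle of even length, so connectedness of $B(G)$ immediately yields, via Theorem~\ref{thm:55}, that $G$ is solvable, that $B(G)$ is a cycle of length $4$ or $6$, and that $dl(G)\le|\cd(G)|\le 4$. The last sentence of the statement is then also immediate: among $C_4$ and $C_6$ only $C_4$ has diameter $2$, and for $B(G)\cong C_4$ the assertion on the normal abelian Hall subgroup $N$ is exactly the last assertion of Theorem~\ref{thm:55}. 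So the whole content of the theorem is the connectedness of $B(G)$, and that is what I would work on.

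First I would record what $2$-regularity means combinatorially. The valency in $B(G)$ of a degree $m\in\cd(G)^*$ equals $|\pi(m)|$, and the valency of a prime $p\in\rho(G)$ equals the number of degrees in $\cd(G)^*$ divisible by $p$. Hence $B(G)$ is $2$-regular if and only if every non-linear character degree has exactly two prime divisors and every prime of $\rho(G)$ divides exactly two non-linear degrees; in particular $\cd(G)^*$ contains no prime power and $|\rho(G)|=|\cd(G)^*|$. Since $B(G)$ is bipartite and simple, being $2$-regular forces each of its connected components to be a cycle of even length at least $4$. So it suffices to derive a contradiction from the assumption that $B(G)$ has at least two connected components.

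Assume then $B(G)=C_{2k_1}+\cdots+C_{2k_c}$ with $c\ge 2$ and each $k_i\ge 2$. By~\cite{IP} we get $n(\Delta(G))=n(B(G))=c\ge 2$, so $\Delta(G)$ is disconnected. If $G$ were non-solvable, the classification of non-solvable groups with disconnected degree graph used in the proof of Theorem~\ref{thm:99} (see~\cite{L} together with the descriptions of the relevant almost simple groups) shows that $\cd(G)$ contains a non-trivial prime power --- a power of the defining characteristic of the simple section governing one of the components --- contradicting the previous paragraph. Hence $G$ is solvable; by Remark~\ref{rem:20}, $\Delta(G)$ has exactly two connected components and $G$ is of one of Lewis' six types. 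Now I would run through the types. If $G$ is of type $1$, $2$, $3$ or $5$, then by Remark~\ref{rem:20} one component of $\Delta(G)$ is a single vertex $\{p\}$; since $p\in\rho(G)$, some non-linear degree is divisible by $p$, and, $\{p\}$ being a full component, that degree is a power of $p$, contradicting that $\cd(G)^*$ has no prime power. If $G$ is of type $4$, then by Remark~\ref{rem:20} and~\cite{ML2} one component of $\Delta(G)$ consists of the prime divisors of the unique degree $|K|$ in $\cd(G|V)$ and this component is disjoint from the other one; hence $|K|$ is the only non-linear degree divisible by a prime of that component, so the connected component of $B(G)$ containing $|K|$ is a star with centre $|K|$, which is never $2$-regular. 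Finally, if $G$ is of type $6$, I would use the reduction already exploited in Lemma~\ref{lemma:new1}: $G/A'$ is of type $4$ and $B(G/A')$ is obtained from $B(G)$ by deleting the blue vertices corresponding to the degrees in $\cd(G|A')$, each of valency at least $2$; deleting valency-$2$ vertices from a disjoint union of even cycles produces a disjoint union of paths and (possibly untouched) cycles, and one checks that no such graph is the bipartite divisor graph of a type-$4$ group (using the star-component obstruction just found, or directly Lemma~\ref{lemma:new}). This exhausts all types, so $B(G)$ is connected, and Theorem~\ref{thm:55} completes the proof.

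The step I expect to cause the most trouble is the analysis of Lewis' types $4$ and $6$: one must make sure that the ``$|K|$-component'' of $\Delta(G)$ in a type-$4$ group is genuinely separated from the other component (so that the corresponding component of $B(G)$ really is a star), and, for type $6$, that the graph $B(G/A')$ obtained after the reduction cannot accidentally be a legitimate bipartite divisor graph of a type-$4$ group. This is precisely the place where the combinatorics of $B(G)$ must be matched carefully against Lewis' structural description of the types.
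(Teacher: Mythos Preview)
The paper does not prove this theorem; it merely cites \cite[Theorem~3.2 and Corollary~3.3]{Hregular}. So there is no in-paper argument against which to compare your proposal, and what follows is an assessment of the proposal on its own merits.

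Your strategy is sound: once connectedness of $B(G)$ is established, Theorem~\ref{thm:55} does the rest, and a connected $2$-regular bipartite graph is indeed an even cycle. The combinatorial observations (every non-linear degree has exactly two prime divisors, every prime of $\rho(G)$ divides exactly two non-linear degrees, each component of $B(G)$ is an even cycle $C_{2k_i}$ with $k_i\ge 2$) are correct, and your handling of Lewis types $1,2,3,5$ (a singleton component of $\Delta(G)$, contradicting $k_i\ge 2$) and of type $4$ (the primes in $\pi(|K|)$ have valency~$1$ in $B(G)$, contradicting $2$-regularity) is fine.

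The two places you yourself flag are where the argument is genuinely incomplete. For the non-solvable case, invoking ``the classification'' is right in spirit, but it would be cleaner and more honest to state the precise fact used: for non-solvable $G$ with $\Delta(G)$ disconnected, the results surveyed in~\cite[Section~6]{L} force one component of $\Delta(G)$ to consist of a single prime (the defining characteristic of the $\mathrm{PSL}_2$ section), which already contradicts $k_i\ge 2$; this is simpler than hunting for a prime-power degree. For type~$6$, the reduction to $G/A'$ of type~$4$ is the right move, but ``one checks'' is not a proof. You must verify that the star obstruction for type~$4$ survives the passage from $B(G)$ to $B(G/A')$: deleting the degrees in $\cd(G|A')$ may also remove primes from $\rho$, so it is not automatic that $B(G/A')$ is literally the induced subgraph you describe; and even granting that, a star $K_{1,t}$ with $t\le 2$ \emph{is} a path, so your ``union of paths and cycles'' description does not by itself yield a contradiction. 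One still needs an extra argument (for instance along the lines of Lemma~\ref{lemma:new}, or by counting primes in the two components of $\Delta(G/A')$ against those of $\Delta(G)$) to close this case. Your sketch stops short of supplying it.
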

Theorem ~\ref{thm: 51} verifies that the union of two cycles is not the bipartite divisor graph of any finite group.

Finally, in the following two theorems, we consider the case where $B(G)$ is $3$-regular.
\begin{theorem}[{{See~\cite[Theorems~$3.4$ and~$3.5$]{Hregular}}}]~\label{thm: 4}
Let $G$ be a group with $B(G)$  $3$-regular. Then $B(G)$ is connected. Moreover, if $\Delta(G)$ is $n$-regular for $n\in\{2,3\}$, then $G$ is solvable and $\Delta(G)\cong  K_{n+1}\cong \Gamma(G)$.
\end{theorem}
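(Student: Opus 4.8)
The plan is to prove the two assertions in turn. Recall from~\cite{IP} that $n(B(G))=n(\Delta(G))$, and that $n(\Delta(G))\le 3$ by~\cite[Theorem~6.4]{L}. Recall also that in $B(G)$ the valency of a degree $m\in\cd(G)^*$ equals $|\pi(m)|$, and the valency of a prime $p\in\rho(G)$ equals the number of degrees in $\cd(G)^*$ divisible by $p$. Thus, when $B(G)$ is $3$-regular, $|\pi(m)|=3$ for every $m\in\cd(G)^*$, every prime of $\rho(G)$ divides exactly three degrees, and (counting edges of $B(G)$ in two ways) $|\rho(G)|=|\cd(G)^*|$.

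\textbf{Connectedness.} Assume $B(G)$ is disconnected. Then $n(B(G))\in\{2,3\}$, and each connected component of $B(G)$ is a connected $3$-regular bipartite graph, hence has both colour classes of the same size $\ge 3$ (the smallest such graph being $K_{3,3}$). If $n(B(G))=3$, then $G$ is non-solvable and, by the classification of the finite groups with a three-component degree graph (see~\cite{H,L}), $G$ has exactly three non-trivial character degrees, one of which is a power of $2$; this degree has valency $1$ in $B(G)$, a contradiction. If $n(B(G))=2$ and $G$ is non-solvable, then the classification of the non-solvable groups with disconnected degree graph (cf. the references in the proof of Theorem~\ref{thm:99}) shows that $\cd(G)$ contains a prime power greater than $1$ (a Steinberg-type degree), which again has valency $1$; so $G$ must be solvable. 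Since $\Delta(G)$ is now disconnected and $G$ solvable, $G$ is of one of Lewis' six types, and we use Remark~\ref{rem:20}. If $G$ has type $1$, $2$, $3$ or $5$, then one connected component of $\Delta(G)$ is a single prime $p$, so every degree divisible by $p$ is a power of $p$ and the component of $B(G)$ containing $p$ is a star, which is not $3$-regular. If $G$ has type $4$, then (in the notation of the proof of Lemma~\ref{lemma:new}) $\cd(G|V)=\{|K|\}$, and the vertex $|K|$ together with the primes dividing it form a star component of $B(G)$, again impossible. Finally, if $G$ has type $6$, one passes to the type-$4$ quotient $G/A'$ as in the proof of Lemma~\ref{lemma:new1}, using~\cite[Lemma~3.6]{ML2} to identify which degrees of $G$ survive, and recovers the same star obstruction. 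Hence $B(G)$ is connected.

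\textbf{The regular case.} Assume in addition that $\Delta(G)$ is $n$-regular with $n\in\{2,3\}$. Since $B(G)$ is connected, so are $\Delta(G)$ and $\Gamma(G)$, and each edge $\{p,q\}$ of $\Delta(G)$ lies on the triangle $\pi(m)$ of $\Delta(G)$ for any $m\in\cd(G)^*$ with $pq\mid m$ (recall $|\pi(m)|=3$). Thus $\Delta(G)$ is the edge-union of the triangles $\pi(m)$, $m\in\cd(G)^*$. An elementary argument — looking, at a fixed vertex, at the three triangles through it and using $n$-regularity — shows that a connected $n$-regular graph with $n\le 3$ that is an edge-union of triangles is isomorphic to $K_{n+1}$; hence $\Delta(G)\cong K_{n+1}$. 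In particular $|\rho(G)|=n+1$ and $|\cd(G)^*|=n+1$. Since each prime of $\rho(G)$ divides exactly three of these $n+1$ degrees, the map sending a degree to the unique prime of $\rho(G)$ not dividing it is a bijection, so the prime sets of the degrees are precisely the $n+1$ distinct $3$-element subsets of $\rho(G)$ (for $n=2$, every degree has prime set $\rho(G)$). Any two such subsets meet, whence $\Gamma(G)\cong K_{n+1}$. Finally $|\cd(G)|=n+2\in\{4,5\}$, and each non-solvable finite group with four or five irreducible character degrees (classified in~\cite{GA,LZ,white}) has a character degree with fewer than three prime divisors — typically a Steinberg-type prime power — which contradicts the $3$-regularity of $B(G)$. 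Therefore $G$ is solvable.

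The step I expect to be hardest is ruling out Lewis type $6$ in the connectedness proof: this is the only one of the six types that does not plainly display a star component of $B(G)$, and handling it requires descending to the type-$4$ quotient $G/A'$ and controlling which character degrees of $G$ are divisible by the prime of its normal Sylow subgroup, in the manner of Lemma~\ref{lemma:new1}. The remaining ingredients — the combinatorial lemma on edge-unions of triangles and the invocation of the classification of groups with at most five character degrees — are routine.
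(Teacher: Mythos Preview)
The paper does not give its own proof of this theorem; it records the statement and cites \cite[Theorems~3.4 and~3.5]{Hregular}. So there is nothing in the present paper to compare your argument against, and your write-up must stand on its own.

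The second half of your argument is essentially complete and rather clean. Once $B(G)$ is known to be connected, the observation that every edge of $\Delta(G)$ lies in some triangle $\pi(m)$ (because $|\pi(m)|=3$ for every $m\in\cd(G)^*$), together with the elementary fact that a connected $n$-regular graph with $n\le 3$ in which every edge lies in a triangle must be $K_{n+1}$, pins down $\Delta(G)$; your double-counting then gives $\Gamma(G)\cong K_{n+1}$ and $|\cd(G)|\in\{4,5\}$, and solvability follows from the Malle--Moret\'o and He--Zhu classifications, since in every non-solvable case one finds a prime-power degree (the Steinberg degree of the $\mathrm{PSL}_2$-type section lies in $\cd(G/V)\subseteq\cd(G)$), contradicting $3$-regularity.

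The genuine gap is precisely where you flag it. For Lewis type~$6$, passing to the type-$4$ quotient $G/A'$ does produce a star $\{|K|\}\cup\pi(|K|)$ inside $B(G/A')$, but to conclude you must check that this star survives in $B(G)$, i.e.\ that no prime of $\pi(|K|)$ divides any degree in $\cd(G|A')$. This is true --- by \cite[Lemma~3.6]{ML2} every degree in $\cd(G|A')$ divides $|P|\,|E{:}F|$, hence has all its prime divisors in $\{p\}\cup\pi(|E{:}F|)$, which is the component of $\Delta(G)$ \emph{not} containing $\pi(|K|)$ --- but this step is not automatic and your sketch does not supply it; one sentence making this explicit would close the argument. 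A smaller point: in the non-solvable two-component case of the connectedness proof, the existence of a prime-power degree follows from the Lewis--White classification of non-solvable groups with disconnected prime graph, not from the references in the proof of Theorem~\ref{thm:99} (which concern groups with few character degrees); you should cite the relevant result directly.
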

\begin{theorem}[{{See~\cite{Hregular}}}]~\label{cor: 1}
Let $G$ be a solvable group with $B(G)$  $3$-regular. Then:
\begin{itemize}
\item[(i)] If at least one of $\Delta(G)$ or $\Gamma(G)$ is not complete, then $\Delta(G)$ is neither $2$-regular, nor $3$-regular.
\item[(ii)] If $\Delta(G)$ is regular, then it is a complete graph. Furthermore, if $\Gamma(G)$ is not complete, then $\Delta(G)$ is isomorphic with $K_{n}$, for $n\geq 5$.
    \end{itemize}
\end{theorem}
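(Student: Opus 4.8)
The plan is to obtain~(i) directly from Theorem~\ref{thm: 4} and to derive~(ii) by combining that theorem and P\'alfy's condition for solvable groups with the very rigid combinatorics forced by the $3$-regularity of $B(G)$.

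First I would record the consequences of $B(G)$ being $3$-regular. Every vertex of $B(G)$ has valency $3$, so every $m\in\cd(G)^*$ is divisible by exactly three primes, every prime in $\rho(G)$ divides exactly three members of $\cd(G)^*$, and counting the edges of $B(G)$ from each side gives $|\rho(G)|=|\cd(G)^*|=:d$; moreover $d\ge 3$, since each prime must lie in three degrees. By~\cite{IP} we have $n(\Delta(G))=n(B(G))$, and $B(G)$ is connected by Theorem~\ref{thm: 4}, so $\Delta(G)$ is connected; furthermore $\Delta(G)$ is the union of the $d$ triangles $T_m$ on the vertex sets $\pi(m)$ ($m\in\cd(G)^*$), with each prime lying in exactly three of these triangles. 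Hence the neighbourhood of a prime $p$ in $\Delta(G)$ is the union of the three $2$-element sets $\pi(m)\setminus\{p\}$ over the three degrees $m$ divisible by $p$, so $\deg_{\Delta(G)}(p)\in\{2,3,4,5,6\}$; in particular, if $\Delta(G)$ is $k$-regular then $2\le k\le 6$.

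For~(i): if $\Delta(G)$ is $2$- or $3$-regular, Theorem~\ref{thm: 4} gives $\Delta(G)\cong K_{n+1}\cong\Gamma(G)$ with $n\in\{2,3\}$, so both $\Delta(G)$ and $\Gamma(G)$ are complete, which is exactly the contrapositive of~(i). For~(ii), let $\Delta(G)$ be $k$-regular; if $k\in\{2,3\}$ then $\Delta(G)\cong K_{k+1}$ by Theorem~\ref{thm: 4}. If $k\in\{4,5,6\}$, assume for contradiction that $\Delta(G)$ is not complete; then $d\ge k+2$, the complement $\overline{\Delta(G)}$ is $(d-1-k)$-regular with $d-1-k\ge 1$, and, $G$ being solvable, P\'alfy's condition forces $\overline{\Delta(G)}$ to be triangle-free. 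I would then combine the short list of overlap patterns of the three $2$-sets $\pi(m)\setminus\{p\}$ compatible with $\deg_{\Delta(G)}(p)=k$, the double count of triangle--edge incidences ($3d$ of them against $kd/2$ edges of $\Delta(G)$), and the triangle-freeness of $\overline{\Delta(G)}$, to exclude every non-complete possibility; the few graphs surviving all of these purely graph-theoretic constraints --- most notably the octahedron $K_{2,2,2}$ --- are ruled out by invoking the structural description of solvable groups whose degree graph is regular or complete multipartite. This gives $\Delta(G)\cong K_{|\rho(G)|}$. Finally, if $\Gamma(G)$ is not complete, then $\pi(m)\cap\pi(m')=\emptyset$ for some $m\ne m'$ in $\cd(G)^*$; since $|\pi(m)|=|\pi(m')|=3$, two such disjoint prime sets force $|\rho(G)|\ge 6$, so $\Delta(G)\cong K_n$ with $n\ge 5$.

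The main obstacle is the case $k\in\{4,5,6\}$ of the first assertion of~(ii). P\'alfy's condition alone is not enough here: $k$-regular graphs that are unions of triangles and have triangle-free complement --- such as $K_{2,2,2}$ --- genuinely occur, so discarding them requires group-theoretic input beyond the purely combinatorial constraints extracted above.
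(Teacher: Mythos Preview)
The paper does not supply its own proof of this result; the theorem is quoted with a bare citation to~\cite{Hregular}, so there is no in-paper argument to compare against and your proposal has to be assessed on its own merits.

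Your treatment of~(i) is correct: it is precisely the contrapositive of the second sentence of Theorem~\ref{thm: 4}. Your argument for the final clause of~(ii) is also fine (and in fact yields $|\rho(G)|\ge 6$, so $n\ge 6$, which is stronger than the stated $n\ge 5$).

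The genuine gap is the one you flag yourself. For the first assertion of~(ii) in the range $k\in\{4,5,6\}$ you do not actually give an argument: saying that the surviving graphs ``are ruled out by invoking the structural description of solvable groups whose degree graph is regular or complete multipartite'' is a placeholder, not a proof. You name no specific theorem that eliminates $K_{2,2,2}$ (or the analogous candidates for $k=5,6$), and you correctly observe that P\'alfy's three-prime condition does not do it. The combinatorial constraints you list---each degree contributing a triangle, each prime lying in three triangles, the edge/triangle double count---are all compatible with $\Delta(G)\cong K_{2,2,2}$, so the obstruction must come from group theory, and you have not identified which result supplies it. Since this case is exactly the nontrivial content of~(ii), the proposal as written does not establish the theorem; to complete it you need either a concrete citation that excludes these non-complete regular configurations for solvable $G$ with $3$-regular $B(G)$, or a direct argument.
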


As a complete bipartite divisor graph $K_{m,m}$ is an $m$-regular graph, Proposition~\ref{prop:2^m-1} applied with  $m=k$ yields infinitely many solvable groups whose bipartite divisor graph is $K_{m,m}$. In particular, we obtain an example of a group whose bipartite divisor graph is a $3$-regular graph.

In Table~\ref{tab:Tcr} we give some examples of $n$-regular bipartite divisor graphs for $n\in\{1,2,3\}$.

\begin{table}[ht]
\caption{Examples of $n$-regular $B(G)$}
\centering
\begin{tabular}{|c|c|c|}
\hline & connected & disconnected \\
\hline $1$-regular & $\mathrm{Sym}(3)$ & $\mathrm{PSL}_2(8)$ \\ \hline
$2$-regular& $\mathtt{SmallGroup}(588,41)$ & Does Not Exist \\ \hline
$3$-regular & $G$ as in Proposition~\ref{prop:2^m-1} with $m=k=3$ & Does Not Exist \\
\hline
\end{tabular}
 \label{tab:Tcr}
\end{table}

As the reader can see, we know very little on groups $G$ with $B(G)$ a regular graph and on the possible bipartite regular graphs that might arise.

\begin{problem}
{\rm Construct (if possible) groups $G$ with $B(G)$ an $n$-regular graph with $n\ge 3$ and with $B(G)\ncong K_{n,n}$.}
\end{problem}

\section{Bounded order bipartite divisor graph of a finite group}\label{sec:bounded}

One of the questions that has been largely discussed by different authors is the classification of graphs that can occur as $\Delta(G)$, for some finite group $G$. To build confidence into this problem researchers have first considered graphs of bounded order. The first family of graphs that cannot  occur as $\Delta(G)$ was discovered in~\cite{BL}; later, this family was generalized in ~\cite{BJL}. These families contain graphs with arbitrarily many vertices, however they provide a great help for the problem of classifying the graphs that do occur as $\Delta(G)$, when $\Delta(G)$  has at most six vertices. For instance, in~\cite{BJLL,L3}, the authors undertake a systematic investigation on the prime degree graphs of solvable groups with six vertices and they  classify the disconnected graphs with six vertices.

Following these footsteps, in this section we study bipartite divisor graphs having at most $6$ vertices. When $B(G)$ has only two vertices,  $B(G)=K_2$ and the group $G$ has only two character degrees and a great deal is known on these groups, see~\cite{BH} and the references therein (see also Theorem~\ref{thm: reg}~(2i)). When $B(G)$ has three vertices, the classification of $G$  boils down to the understanding of groups having only two character degrees, or of groups with $\cd(G)=\{1,p^\alpha,p^\beta\}$ (which in turn is a problem on $p$-groups).

\begin{theorem}[{{See~\cite{moo4}}}]
Let $G$ be a finite group with $B(G)$ connected and having at  most four vertices. Then $G$ is solvable, $B(G)$ is one of the graphs in Figure~$\ref{fig: 11}$, and we have the following properties:
\begin{itemize}
\item[(i)] if $B(G)$ has two vertices, then $G'$ is abelian, $G=AP$, where $P\in Syl_{p}(G)$ and $A$ is an abelian normal $p$-complement;
    \item[(ii)] if $B(G)$ has three vertices, then
    \begin{itemize}
    \item[(a)]$G=AP$, where $P\in Syl_{p}(G)$ and $A$ is an abelian normal $p$-complement, or
    \item[(b)]$G'$ is abelian, $G'\cap \Z G=1$ and $\frac{G}{\Z G}$ is a Frobenius group with cyclic complement;
    \end{itemize}
    \item[(iii)] if $B(G)$ has four vertices, then
    \begin{itemize}
    \item[(c)] $G=AH$ is the semidirect product of an abelian normal subgroup $A$ and a
Hall subgroup $H$ which is either a Sylow $p$-subgroup of $G$ or an abelian $\{p,q\}$-subgroup, or
\item[(d)]$G'$ is abelian, $G'\cap \Z G=1$ and $\frac{G}{\Z G}$ is a Frobenius group with cyclic complement.
    \end{itemize}
\end{itemize}
\end{theorem}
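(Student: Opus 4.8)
\emph{Plan and reduction to finitely many shapes.} The plan is to organise everything around the number $o(B(G))\in\{2,3,4\}$ of vertices and around which part of the bipartition $V(B(G))=\rho(G)\amalg\cd(G)^{*}$ is the larger one. Since $B(G)$ is connected and bipartite, both parts are non-empty, so $G$ is non-abelian and $2\le|\cd(G)|\le 4$. First I would list the connected bipartite graphs on at most four vertices, recording in each the location of $\rho(G)$; these are precisely the graphs drawn in Figure~\ref{fig: 11}. On two vertices one gets $K_{2}$ with $\cd(G)=\{1,p^{\alpha}\}$. On three vertices one gets the path $P_{2}$, either with the prime in the middle ($\cd(G)=\{1,p^{a},p^{b}\}$) or with the degree in the middle ($\cd(G)=\{1,p^{a}q^{b}\}$). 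On four vertices one gets the star $K_{1,3}$ (either $\cd(G)=\{1,p^{a},p^{b},p^{c}\}$, with the prime at the centre, or $\cd(G)=\{1,p^{a}q^{b}r^{c}\}$, with the degree at the centre), the path $P_{3}$ (after relabelling, $\cd(G)=\{1,p^{a}q^{b},q^{c}\}$), and the four-cycle $C_{4}=K_{2,2}$ ($\cd(G)=\{1,m_{1},m_{2}\}$ with $pq\mid m_{1},m_{2}$ and $\rho(G)=\{p,q\}$). Then (i) comes from the two-vertex shape, (ii) from the three-vertex shapes, and (iii) from the four-vertex shapes.

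\emph{Solvability.} In every entry of this list, at least one of the following holds: $|\rho(G)|=1$, or $|\cd(G)^{*}|=1$, or $B(G)\in\{P_{3},C_{4}\}$. If $|\rho(G)|=1$, a single prime $p$ divides every non-linear character degree, so by Thompson's theorem \cite[(12.2)]{IS} $G$ has a normal $p$-complement and is solvable. If $|\cd(G)^{*}|=1$, then $|\cd(G)|=2$ and $G$ is solvable by the classical theory. If $B(G)=P_{3}$, then $B(G)$ is a path, so $G$ is solvable by Theorem~\ref{thm:99}; and if $B(G)=C_{4}$, then $G$ is solvable by Theorem~\ref{thm:55}. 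Thus $G$ is solvable in all cases, and it remains to extract the structural statements.

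\emph{Structural part, case by case.} When $|\rho(G)|=1$, let $A$ be the normal $p$-complement and $P\in\mathrm{Syl}_{p}(G)$. For $\theta\in\Irr(A)$ and $\chi\in\Irr(G)$ lying over $\theta$, Clifford theory gives $\theta(1)\mid\chi(1)$; since $\theta(1)$ is a $p'$-number while $\chi(1)$ is a power of $p$, we get $\theta(1)=1$, so $A$ is abelian and $G=AP$. This already gives (ii)(a) and the first alternative of (iii)(c) (with $H=P$); in the two-vertex case it also gives (i), the abelianity of $G'$ following from $|\cd(G)|=2$ (which forces $dl(G)\le 2$). When $|\cd(G)^{*}|=1$ and the unique non-linear degree $m$ is not a prime power (the $P_{2}$ with $|\rho(G)|=2$ and the $K_{1,3}$ with $|\rho(G)|=3$), I would invoke the classical classification of the finite groups with exactly one non-trivial character degree (\cite[Chapter~12]{IS}, \cite{BH}, \cite{N}): such a group is metabelian with $G'\cap\Z G=1$ and $G/\Z G$ a Frobenius group with cyclic complement of order $m$; this yields (ii)(b) and (iii)(d). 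There remain the two four-vertex graphs with $|\cd(G)^{*}|=2$. If $B(G)=C_{4}$, Theorem~\ref{thm:55} supplies a normal abelian Hall subgroup $N$ with $\cd(G)=\{[G:I_{G}(\lambda)]:\lambda\in\Irr(N)\}$; writing $G=N\rtimes H$ by Schur–Zassenhaus and using that these degrees divide $|H|$ and have $\{p,q\}$ as their set of prime divisors, a short analysis of the $H$-orbits on $\Irr(N)$ shows that $H$ is an abelian $\{p,q\}$-group, which is the second alternative of (iii)(c). If $B(G)=P_{3}$, so $\cd(G)=\{1,p^{a}q^{b},q^{c}\}$, then $q$ divides every non-linear degree, whence $G$ has a normal $q$-complement $L$; a Clifford-theoretic degree count forces $\cd(L)\subseteq\{1,p^{a'}\}$, and after ruling out $L$ abelian (which is incompatible with the $p$-part of $p^{a}q^{b}$) one peels off an abelian normal Hall $\{p,q\}'$-subgroup of $G$ and identifies the complement, landing once more in (iii)(c) or (iii)(d).

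\emph{Where the difficulty lies.} Everything controlled by $|\rho(G)|=1$ or by the class $|\cd(G)|=2$ is routine once Thompson's normal $p$-complement theorem and a one-line degree count are in place. The real work is in the two $(2,2)$-cases on four vertices, $C_{4}$ and $P_{3}$: there one has to produce the semidirect decomposition $G=AH$ and pin down exactly what $H$ is (an abelian $\{p,q\}$-group, a Sylow subgroup, or else show the Frobenius quotient of (iii)(d) must hold). Doing this cleanly requires combining the cycle theorem~\ref{thm:55}, normal-complement arguments and the structure theory of groups with three character degrees, together with careful bookkeeping of which primes can divide which degree; as a by-product, one should also verify which of the graphs displayed in Figure~\ref{fig: 11} actually occur as some $B(G)$.
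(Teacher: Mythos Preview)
The paper does not prove this theorem: it is quoted verbatim from~\cite{moo4} with no argument supplied, so there is no in-paper proof to compare against. What follows are comments on the proposal itself.

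Your reduction to the seven shapes in Figure~\ref{fig: 11} and the solvability argument are sound. (For $B(G)\cong P_{3}$ it is cleaner to cite Theorem~\ref{thm:990}, which directly asserts solvability for paths; Theorem~\ref{thm:99} gives it only by contrapositive.) The structural claims when $|\rho(G)|=1$ or $|\cd(G)^{*}|=1$ are also correct: Thompson plus a one-line Clifford count handles the first, and the classification in~\cite{BH,IS} handles the second.

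The genuine gap is exactly where you flag it, in the two $(2,2)$ shapes $C_{4}$ and $P_{3}$, and your sketch does not close it. For $C_{4}$, Theorem~\ref{thm:55} produces a normal abelian Hall subgroup $N$ with $\cd(G)=\{[G:I_{G}(\lambda)]\}$, but it does \emph{not} tell you that $\pi(G/N)=\{p,q\}$, nor that the Schur--Zassenhaus complement $H$ is abelian; the phrase ``a short analysis of the $H$-orbits'' hides a real argument. For $P_{3}$, after obtaining the abelian normal Hall $\{p,q\}'$-subgroup you still have to show that the Hall $\{p,q\}$-complement is abelian (landing in~(iii)(c)) or that the Frobenius alternative~(iii)(d) holds. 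In both cases $|\cd(G)|=3$, and what is actually needed is Noritzsch's structure theorem for groups with three character degrees~\cite{N} (or a reproof of the relevant pieces); this is presumably the core of Moosavi's argument in~\cite{moo4}, and your proposal stops short of it.
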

This theorem shows that when $B(G)$ has at most four vertices the structure of the graph $B(G)$ and {\em also } the structure of the group $G$ is well-understood. (If $B(G)$ is disconnected, then $B(G)=K_2+K_2$ and this case was dealt with in the previous section.) The same behavior occurs when $B(G)$ has five vertices.

\begin{theorem}[{See~\cite{moo5}}]
Let $G$ be a finite group with $B(G)$ connected and having five vertices. Then $B(G)$ is one of the graphs in Figure~$\ref{fig: 8}$,~$\ref{fig: 9}$ or~$\ref{fig: 10}$. Furthermore, we have the following properties:
\begin{itemize}
\item[(i)] If $|\rho(G)|=1$, then $G=AP$, where $P$ is a Sylow $p$-subgroup for some prime $p$ and $A$ is a normal abelian $p$-complement.
        \item[(ii)] If $|\rho(G)|=2$, then $G$ is solvable and $G=HN$, where $H$ is either a Sylow $p$-subgroup or a Hall $\{p,q\}$-subgroup of $G$ and $N$ is a normal complement.
            \item[(iii)] If $|\rho(G)|=3$, then $G$ is solvable and one of the following cases occurs:
            \begin{itemize}
            \item[(a)] $G=HN$ where $H$ is a Sylow $p$-subgroup or a Hall $\{p,q\}$-subgroup or a Hall abelian $\{p,q,r\}$-subgroup of $G$ and $N$ is its normal complement.
                \item[(b)] $G=QN$, where $Q$ is an abelian Sylow $q$-subgroup of $G$ and $N$ is its normal complement.
                    \end{itemize}
                    \item[(iv)] If $|\rho(G)|=4$, then $G'$ is abelian, $G'\cap \Z G=1$ and $\frac{G}{\Z G}$ is a Frobenius group with cyclic complement.
\end{itemize}
\end{theorem}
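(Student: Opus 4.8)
The plan is a case analysis governed by the size of the colour class $\rho(G)$. Since $B(G)$ is connected with exactly five vertices and is bipartite with colour classes $\rho(G)$ and $\cd(G)^{*}$, both classes are non-empty and $|\rho(G)|+|\cd(G)^{*}|=5$; hence $|\rho(G)|\in\{1,2,3,4\}$, which is precisely the stratification of the statement into parts~(i)--(iv). For each value of $|\rho(G)|$ I would first list the finitely many connected bipartite graphs whose colour classes have the prescribed cardinalities --- these are the \emph{a priori} candidates for $B(G)$ --- and then eliminate the impossible ones and extract the structure of $G$, using that $B(G)$ simultaneously encodes $\Delta(G)$ and $\Gamma(G)$: two primes are adjacent in $\Delta(G)$ exactly when they have a common neighbour in $B(G)$, two degrees are adjacent in $\Gamma(G)$ exactly when they have a common neighbour, and $n(B(G))=n(\Delta(G))$ by~\cite{IP}. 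The graphs that survive this analysis are precisely those drawn in Figures~\ref{fig: 8},~\ref{fig: 9} and~\ref{fig: 10}.

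The two extreme cases are quick. If $|\rho(G)|=1$, then every non-trivial degree is a power of a single prime $p$, so no prime $q\neq p$ divides a character degree; applying the It\^o--Michler theorem to each such $q$ yields a normal abelian Sylow $q$-subgroup for every $q\neq p$, hence a normal abelian $p$-complement $A$, and $G=AP$ with $P\in\mathrm{Syl}_p(G)$, while $B(G)$ is the star with centre $p$. If $|\rho(G)|=4$, then $|\cd(G)^{*}|=1$, so $\cd(G)=\{1,m\}$ and $B(G)$ is the star with the four primes as leaves; invoking the classification of the groups with a unique non-trivial character degree (\cite[Chapter~12]{IS}) and observing that such an $m$ cannot split off a proper direct factor forces the Frobenius-type situation of~(iv): $G'$ abelian, $G'\cap\Z G=1$ and $G/\Z G$ a Frobenius group with cyclic complement. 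In all four cases $G$ turns out to be solvable: for $|\rho(G)|=1$ and $|\rho(G)|=4$ this is immediate from the above, while for $|\rho(G)|=3$ one has $|\cd(G)|=3$ and a non-solvable group has at least four character degrees, and for $|\rho(G)|=2$ one has $|\cd(G)|=4$ but a non-solvable group has at least three primes dividing some character degree.

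It remains to treat $|\rho(G)|\in\{2,3\}$, where the bulk of the work lies. Here I would run through each admissible shape of $B(G)$, translate it into the isomorphism type of $\Delta(G)$ (a single edge, a path, or two isolated vertices when $|\rho(G)|=2$; $K_3$, a path, or a graph with an isolated vertex when $|\rho(G)|=3$) and of $\Gamma(G)$, and then apply the relevant structural results: the classification of solvable groups with three character degrees (\cite{N}) when $|\rho(G)|=3$, and Theorem~$5.2$ of~\cite{ML2} --- which produces a normal non-abelian Sylow subgroup when $\Gamma(G)$ has no isolated vertex --- together with Remark~\ref{rem:20} and Lewis's six types for the remaining configurations. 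This should yield the decomposition $G=HN$ with $H$ a Sylow $p$-subgroup or a (possibly abelian) Hall subgroup and $N$ a normal complement in parts~(ii) and~(iii)(a), the abelian-Sylow decomposition $G=QN$ of~(iii)(b), and the Frobenius structure wherever it is claimed. The main obstacle I anticipate is the bookkeeping in these two cases: several non-isomorphic shapes of $B(G)$ occur (from the complete bipartite graphs $K_{2,3}$ and $K_{3,2}$ down to graphs whose $\Delta(G)$ is a path), each must be shown to be genuinely realised or else excluded, and for each surviving one must pin down the exact structural conclusion from a simultaneous analysis of $\Delta(G)$, $\Gamma(G)$ and the Lewis type; the shapes that must be excluded are ruled out by Thompson's theorem on a prime dividing every non-linear degree (forcing normal $p$-complements, cf.~\cite[(12.2)]{IS}) and, in a few sub-cases, by Zsigmondy's theorem~\cite{zsigmondy} on primitive prime divisors.
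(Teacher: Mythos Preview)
The paper does not give its own proof of this theorem: it is quoted verbatim as a result of Moosavi and attributed via ``See~\cite{moo5}'', with no argument supplied. So there is no proof in the present paper to compare your proposal against; the authors simply import the statement and use it to organise the subsequent discussion and Table~\ref{tab:BOB}.

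That said, your outline is broadly the right shape for how such a result is established. A few remarks on the content of your sketch. The solvability arguments are fine: for $|\rho(G)|=3$ you have $|\cd(G)|=3$ and non-solvable groups have at least four character degrees; for $|\rho(G)|=2$ the fact you need is that every non-solvable group has $|\rho(G)|\ge 3$ (this is standard, e.g.\ via It\^o--Michler applied to a non-abelian composition factor). For $|\rho(G)|=4$ your reduction to the two-degree classification is correct, but the passage ``such an $m$ cannot split off a proper direct factor forces the Frobenius-type situation'' is doing real work that you have not justified: with four primes dividing $m$ one must argue that neither the $p$-group case nor the ``$|G:\F G|=p$'' case of the two-degree classification can occur, and only then does~\cite{BH} (or the argument in~\cite[Chapter~12]{IS}) yield the Frobenius structure in~(iv). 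Finally, your last paragraph is honest about where the labour lies, but the invocation of Zsigmondy and of Thompson's theorem is speculative; in the actual case analysis for $|\rho(G)|\in\{2,3\}$ the main external inputs are Noritzsch's classification~\cite{N} for $|\cd(G)|=3$ and the structural lemmas for $|\cd(G)|=4$, and no graph needs to be excluded by primitive-prime-divisor arguments at this level of the classification.
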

The following example will be a useful tool to construct most of the groups in Table~\ref{tab:BOB}.
\begin{example}~\label{exam: mi}{\rm
Let $1<m_1<m_2<\cdots<m_r$ be distinct positive integers such that $m_i$ divides $m_{i+1}$ for all $i\in\{1,\ldots,r-1\}$. Then by ~\cite[Theorem 4.1]{N} there exists a group $G$ such that $\cd(G)=\{1,m_1,\ldots,m_r\}$. We denote this group by $G_{m_1,\ldots,m_r}$. Indeed, our Proposition~\ref{prop:2^m-1} is a very special case of this general result.}
\end{example}

\vskip 0.3 true cm
\begin{table}%[bob]
\caption{Examples of $G$ with $o(B(G))\leq 5$, where it is connected}
\centering
\begin{tabular}{|c|c|c|l|}\hline
Graph & Examples & Graph & Examples\\
\hline
nr~1 Figure~~\ref{fig: 8}&$Q_{8}\times Q_{8}\times Q_{8}\times Q_{8}$&nr~1 Figure~~\ref{fig: 11}&$\mathtt{Sym}(3)$\\
\hline
nr~2 Figure~~\ref{fig: 8}&$G_{210}$ as in Example~\ref{exam: mi}&nr~2 Figure~~\ref{fig: 11}&$\mathtt{SmallGroup}(32,6)$\\
&&& with normal Sylow $2$-subgroup\\
\hline
nr~1 Figure~~\ref{fig: 9}&$\mathtt{SmallGroup}(108,17)$&nr~2 Figure~~\ref{fig: 11}&$\mathtt{SmallGroup}(96,13)$\\
&&& with non-normal Sylow $2$-subgroup\\
% %&&$\mathtt{SmallGroup}(96,13)$, the Sylow $2$-subgroup is non-normal\\
\hline
nr~2 Figure~~\ref{fig: 9}&$G_{2,6,12}$ as in Example~\ref{exam: mi}&nr~3 Figure~~\ref{fig: 11}&$\mathtt{SmallGroup}(42,1)$\\
\hline
nr~3 Figure~~\ref{fig: 9}&$G_{6,12,24}$ as in Example~\ref{exam: mi}&nr~4 Figure~~\ref{fig: 11}&$\mathtt{SmallGroup}(930,1)$\\
\hline
nr~4 Figure~~\ref{fig: 9}&$\mathtt{SmallGroup}(72,15)$&nr~5 Figure~~\ref{fig: 11}&$\mathtt{SmallGroup}(384,20)$\\
&&& with non-normal Sylow $2$-subgroup\\
\hline
nr~1 Figure~~\ref{fig: 10}&$G_{5,30}$ as in Example~\ref{exam: mi}&nr~5 Figure~~\ref{fig: 11}&$\mathtt{SmallGroup}(128,71)$\\
&&& with normal Sylow $2$-subgroup\\
\hline
nr~2 Figure~~\ref{fig: 10}&$G_{15,30}$ as in Example~\ref{exam: mi}&nr~6 Figure~~\ref{fig: 11}&$\mathtt{SmallGroup}(588,38)$\\
\hline
nr~3 Figure~~\ref{fig: 10}&$G_{30,60}$ as in Example~\ref{exam: mi}&nr~7 Figure~~\ref{fig: 11}&$\mathtt{SmallGroup}(96,70)$\\
\hline
nr~4 Figure~~\ref{fig: 10}&$\mathtt{SmallGroup}(960,5748)$& \\
\hline
\end{tabular}
 \label{tab:BOB}
\end{table}

\begin{figure}
\begin{tikzpicture}
[scale=.8,auto=left,every node/.style={circle,fill=blue!20}]
\node (m2) at (1,1)  {};
  \node[fill=blue] (m3) at (1,3)  {};

  \foreach \from/\to in {m2/m3}
    \draw (\from) -- (\to);
%\draw (n1) to [out=-20,in=35] (n3);
\end{tikzpicture}
\quad\quad\quad
\begin{tikzpicture}
[scale=.8,auto=left,every node/.style={circle,fill=blue!20}]
\node (m2) at (2,1)  {};
  \node[fill=blue] (m3) at (1,3)  {};
  \node[fill=blue] (m4) at (3,3)  {};

  \foreach \from/\to in {m2/m3,m2/m4}
    \draw (\from) -- (\to);
%\draw (n1) to [out=-20,in=35] (n3);
\end{tikzpicture}
\quad\quad\quad
\begin{tikzpicture}
[scale=.8,auto=left,every node/.style={circle,fill=blue!20}]
\node (m2) at (1,1)  {};
  \node (m3) at (3,1)  {};
  \node[fill=blue] (m4) at (2,3)  {};

  \foreach \from/\to in {m2/m4,m3/m4}
    \draw (\from) -- (\to);
%\draw (n1) to [out=-20,in=35] (n3);
\end{tikzpicture}
\quad\quad\quad
\begin{tikzpicture}
  [scale=.8,auto=left,every node/.style={circle,fill=blue!20}]
  %\node (n6) at (1,10) {6};
  \node (m2) at (1,1)  {};
  \node (m3) at (2,1)  {};
  \node (m4) at (3,1)  {};
  \node[fill=blue] (m6) at (2,3)  {};

  \foreach \from/\to in {m2/m6,m3/m6,m4/m6}
    \draw (\from) -- (\to);
%\draw (n1) to [out=-20,in=35] (n3);
\end{tikzpicture}
\quad\quad\quad
\begin{tikzpicture}
  [scale=.8,auto=left,every node/.style={circle,fill=blue!20}]
  %\node (n6) at (1,10) {6};
  \node (m2) at (2,1)  {};
  \node[fill=blue] (m3) at (1,3)  {};
  \node[fill=blue] (m4) at (2,3)  {};
  \node[fill=blue] (m5) at (3,3)  {};

  \foreach \from/\to in {m2/m3,m2/m4,m5/m2}
    \draw (\from) -- (\to);
%\draw (n1) to [out=-20,in=35] (n3);
\end{tikzpicture}
\quad\quad\quad
\begin{tikzpicture}
  [scale=.8,auto=left,every node/.style={circle,fill=blue!20}]
  %\node (n6) at (1,10) {6};
  \node (m2) at (1,1)  {};
  \node (m3) at (2,1)  {};
\node[fill=blue] (m4) at (1,3) {};
  \node[fill=blue] (m6) at (2,3)  {};

  \foreach \from/\to in {m6/m3,m6/m2,m3/m4,m2/m4}
    \draw (\from) -- (\to);
%\draw (n1) to [out=-20,in=35] (n3);
\end{tikzpicture}
\quad\quad\quad
\begin{tikzpicture}
  [scale=.8,auto=left,every node/.style={circle,fill=blue!20}]
  %\node (n6) at (1,10) {6};
  \node (m2) at (1,1)  {};
  \node (m3) at (2,1)  {};
\node[fill=blue] (m4) at (1,3) {};
  \node[fill=blue] (m6) at (2,3)  {};

  \foreach \from/\to in {m6/m3,m3/m4,m2/m4}
    \draw (\from) -- (\to);
%\draw (n1) to [out=-20,in=35] (n3);
\end{tikzpicture}
\caption{Connected bipartite divisor graphs of order at most four}
\label{fig: 11}
\end{figure}
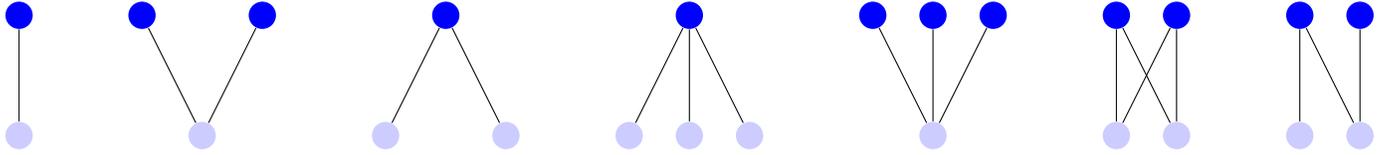

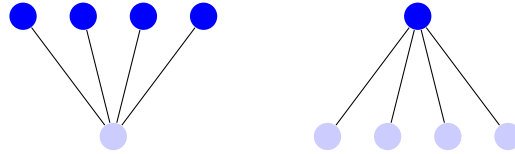
\begin{figure}
\begin{tikzpicture}
  [scale=.8,auto=left,every node/.style={circle,fill=blue!20}]
  %\node (n6) at (1,10) {6};
  \node (m2) at (2.5,1)  {};
  \node[fill=blue] (m3) at (1,3)  {};
  \node[fill=blue] (m4) at (2,3)  {};
  \node[fill=blue] (m5) at (3,3)  {};
  \node[fill=blue] (m6) at (4,3)  {};

  \foreach \from/\to in {m2/m3,m2/m4,m5/m2,m2/m6}
    \draw (\from) -- (\to);
%\draw (n1) to [out=-20,in=35] (n3);
\end{tikzpicture}
\quad\quad\quad
\begin{tikzpicture}
  [scale=.8,auto=left,every node/.style={circle,fill=blue!20}]
  %\node (n6) at (1,10) {6};
  \node (m2) at (1,1)  {};
  \node (m3) at (2,1)  {};
  \node (m4) at (3,1)  {};
  \node (m5) at (4,1)  {};
  \node[fill=blue] (m6) at (2.5,3)  {};

  \foreach \from/\to in {m6/m3,m6/m2,m6/m4,m6/m5}
    \draw (\from) -- (\to);
%\draw (n1) to [out=-20,in=35] (n3);
\end{tikzpicture}
\caption{Connected bipartite divisor graphs of order five with one or four primes}
\label{fig: 8}
\end{figure}

\begin{figure}
\begin{tikzpicture}
  [scale=.8,auto=left,every node/.style={circle,fill=blue!20}]
  %\node (n6) at (1,10) {6};
  \node (m2) at (2,1)  {};
  \node[fill=blue] (m3) at (1,3)  {};
  \node (m4) at (3,1)  {};
  \node[fill=blue] (m5) at (2,3)  {};
  \node[fill=blue] (m7) at (3,3)  {};

  \foreach \from/\to in {m2/m3,m2/m5,m2/m7,m4/m7}
    \draw (\from) -- (\to);
%\draw (n1) to [out=-20,in=35] (n3);
\end{tikzpicture}
\quad\quad\quad
\begin{tikzpicture}
  [scale=.8,auto=left,every node/.style={circle,fill=blue!20}]
  %\node (n6) at (1,10) {6};
  \node (m2) at (2,1)  {};
  \node[fill=blue] (m3) at (1,3)  {};
  \node (m4) at (3,1)  {};
  \node[fill=blue] (m5) at (2,3)  {};
  \node[fill=blue] (m7) at (3,3)  {};

  \foreach \from/\to in {m2/m3,m2/m5,m2/m7,m4/m7,m4/m5}
    \draw (\from) -- (\to);
%\draw (n1) to [out=-20,in=35] (n3);
\end{tikzpicture}
\quad\quad\quad
\begin{tikzpicture}
  [scale=.8,auto=left,every node/.style={circle,fill=blue!20}]
  %\node (n6) at (1,10) {6};
  \node (m2) at (2.5,1)  {};
  \node[fill=blue] (m3) at (2,3)  {};
  \node[fill=blue] (m4) at (3,3)  {};
  \node (m6) at (3.5,1)  {};
  \node[fill=blue] (m7) at (4,3)  {};

  \foreach \from/\to in {m2/m3,m2/m4,m2/m7,m7/m6,m6/m3,m6/m4}
    \draw (\from) -- (\to);
%\draw (n1) to [out=-20,in=35] (n3);
\end{tikzpicture}
\quad\quad\quad
\begin{tikzpicture}
  [scale=.8,auto=left,every node/.style={circle,fill=blue!20}]
  %\node (n6) at (1,10) {6};
  \node (m2) at (1.5,1)  {};
  \node (m3) at (2.5,1)  {};
  \node[fill=blue] (m5) at (1,3)  {};
  \node[fill=blue] (m6) at (2,3)  {};
  \node[fill=blue] (m7) at (3,3)  {};

  \foreach \from/\to in {m2/m5,m2/m6,m6/m3,m3/m7}
    \draw (\from) -- (\to);
%\draw (n1) to [out=-20,in=35] (n3);
\end{tikzpicture}
\caption{Connected bipartite divisor graphs of order five and two primes}
\label{fig: 9}
\end{figure}

\begin{figure}
\begin{tikzpicture}
  [scale=.8,auto=left,every node/.style={circle,fill=blue!20}]
  %\node (n6) at (1,10) {6};
  \node (m2) at (1,1)  {};
  \node (m3) at (2,1)  {};
  \node (m4) at (3,1)  {};
  \node[fill=blue] (m5) at (2,3)  {};
  \node[fill=blue] (m7) at (3,3)  {};

  \foreach \from/\to in {m2/m5,m3/m5,m5/m4,m4/m7}
    \draw (\from) -- (\to);
%\draw (n1) to [out=-20,in=35] (n3);
\end{tikzpicture}
\quad\quad\quad
\begin{tikzpicture}
  [scale=.8,auto=left,every node/.style={circle,fill=blue!20}]
  %\node (n6) at (1,10) {6};
  \node (m2) at (1,1)  {};
  \node (m3) at (2,1)  {};
  \node (m4) at (3,1)  {};
  \node[fill=blue] (m5) at (2,3)  {};
  \node[fill=blue] (m7) at (3,3)  {};

  \foreach \from/\to in {m2/m5,m3/m5,m5/m4,m4/m7,m3/m7}
    \draw (\from) -- (\to);
%\draw (n1) to [out=-20,in=35] (n3);
\end{tikzpicture}
\quad\quad\quad
\begin{tikzpicture}
  [scale=.8,auto=left,every node/.style={circle,fill=blue!20}]
  %\node (n6) at (1,10) {6};
   \node (m2) at (1,1)  {};
  \node (m3) at (2,1)  {};
  \node (m4) at (3,1)  {};
  \node[fill=blue] (m5) at (1.5,3)  {};
  \node[fill=blue] (m7) at (2.5,3)  {};

  \foreach \from/\to in {m2/m5,m3/m5,m5/m4,m4/m7,m3/m7,m7/m2}
    \draw (\from) -- (\to);
%\draw (n1) to [out=-20,in=35] (n3);
\end{tikzpicture}
\quad\quad\quad
\begin{tikzpicture}
  [scale=.8,auto=left,every node/.style={circle,fill=blue!20}]
  %\node (n6) at (1,10) {6};
  \node (m2) at (1,1)  {};
  \node (m3) at (2,1)  {};
  \node (m4) at (3,1)  {};
  \node[fill=blue] (m5) at (1.5,3)  {};
  \node[fill=blue] (m7) at (2.5,3)  {};

  \foreach \from/\to in {m2/m5,m3/m5,m4/m7,m3/m7}
    \draw (\from) -- (\to);
%\draw (n1) to [out=-20,in=35] (n3);
\end{tikzpicture}
\caption{Connected bipartite divisor graphs of order five and three primes}
\label{fig: 10}
\end{figure}
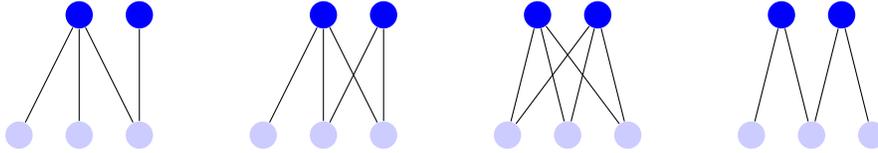

\begin{remark}{\rm
Let $G$ be a finite group with $B(G)$ disconnected and having five vertices. A case-by-case analysis yields  that $B(G)$ is one of the graphs in Figure~\ref{fig: 1}. In particular, $B(G)$ is a union of two paths and hence the structure of $G$ is described in Section~\ref{unionpaths}.
}
\end{remark}

\begin{figure}
\begin{tikzpicture}
  [scale=.8,auto=left,every node/.style={circle,fill=blue!20}]
  %\node (n6) at (1,10) {6};
  \node (m2) at (2,1)  {};
  \node[fill=blue] (m3) at (1,3)  {};
  \node[fill=blue] (m4) at (3,3)  {};
  \node (m5) at (3,1)  {};
  \node[fill=blue] (m6) at (4,3)  {};

  \foreach \from/\to in {m2/m3,m2/m4,m5/m6}
    \draw (\from) -- (\to);
%\draw (n1) to [out=-20,in=35] (n3);
\end{tikzpicture}
\quad\quad\quad
\begin{tikzpicture}
  [scale=.8,auto=left,every node/.style={circle,fill=blue!20}]
  %\node (n6) at (1,10) {6};
  \node (m2) at (1,1)  {};
  \node (m3) at (3,1)  {};
  \node (m4) at (4,1)  {};
  \node[fill=blue] (m5) at (3,3)  {};
  \node[fill=blue] (m6) at (2,3)  {};

  \foreach \from/\to in {m6/m3,m6/m2,m5/m4}
    \draw (\from) -- (\to);
%\draw (n1) to [out=-20,in=35] (n3);
\end{tikzpicture}
\caption{Disconnected bipartite divisor graphs of order five}
\label{fig: 1}
\end{figure}
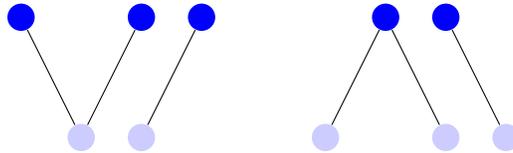

\begin{figure}
\begin{tikzpicture}
  [scale=.8,auto=left,every node/.style={circle,fill=blue!20}]
  %\node (n6) at (1,10) {6};
  \node (m2) at (2,1)  {};
  \node[fill=blue] (m3) at (1,3)  {};
  \node[fill=blue] (m4) at (3,3)  {};
  \node (m5) at (3,1)  {};
  \node[fill=blue] (m6) at (2,3)  {};
  \node[fill=blue] (m7) at (4,3)  {};

  \foreach \from/\to in {m2/m3,m2/m4,m2/m6,m5/m7}
    \draw (\from) -- (\to);
%\draw (n1) to [out=-20,in=35] (n3);
\end{tikzpicture}
%\caption{$B_{1}$}
\quad\quad\quad
\begin{tikzpicture}
  [scale=.8,auto=left,every node/.style={circle,fill=blue!20}]
  %\node (n6) at (1,10) {6};
  \node (m2) at (5,1)  {};
  \node[fill=blue] (m3) at (3,3)  {};
  \node (m4) at (2,1)  {};
  \node[fill=blue] (m5) at (1,3)  {};
  \node[fill=blue] (m6) at (4,3)  {};
  \node[fill=blue] (m7) at (6,3)  {};

  \foreach \from/\to in {m4/m3,m2/m6,m5/m4,m2/m7}
    \draw (\from) -- (\to);
%\draw (n1) to [out=-20,in=35] (n3);
\end{tikzpicture}
\quad\quad\quad
\begin{tikzpicture}
  [scale=.8,auto=left,every node/.style={circle,fill=blue!20}]
  %\node (n6) at (1,10) {6};
  \node (m2) at (1,1)  {};
  \node (m3) at (2,1)  {};
  \node (m4) at (3,1)  {};
  \node[fill=blue] (m5) at (2,3)  {};
  \node (m6) at (4,1)  {};
  \node[fill=blue] (m7) at (3,3)  {};

  \foreach \from/\to in {m2/m5,m3/m5,m4/m5,m6/m7}
    \draw (\from) -- (\to);
%\draw (n1) to [out=-20,in=35] (n3);
\end{tikzpicture}
\caption{Disconnected bipartite divisor graphs of order six (part 1)}
\label{fig: 2}
\end{figure}
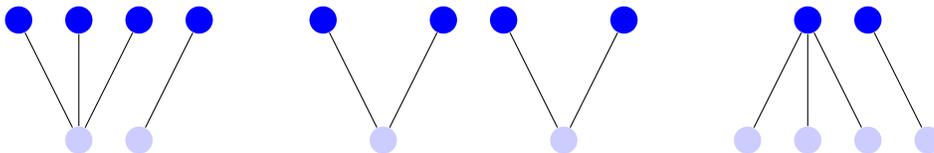

\begin{figure}
\begin{tikzpicture}
  [scale=.8,auto=left,every node/.style={circle,fill=blue!20}]
  %\node (n6) at (1,10) {6};
  \node (m2) at (1,1)  {};
  \node[fill=blue] (m3) at (1,3)  {};
  \node (m4) at (2,1)  {};
  \node[fill=blue] (m5) at (2,3)  {};
  \node (m6) at (3,1)  {};
  \node[fill=blue] (m7) at (3,3)  {};

  \foreach \from/\to in {m2/m3,m4/m5,m5/m6,m6/m7}
    \draw (\from) -- (\to);
%\draw (n1) to [out=-20,in=35] (n3);
\end{tikzpicture}
\quad\quad\quad
\begin{tikzpicture}
  [scale=.8,auto=left,every node/.style={circle,fill=blue!20}]
  %\node (n6) at (1,10) {6};
  \node (m2) at (1,1)  {};
  \node[fill=blue] (m3) at (1,3)  {};
  \node (m4) at (2,1)  {};
  \node[fill=blue] (m5) at (2,3)  {};
  \node (m6) at (3,1)  {};
  \node[fill=blue] (m7) at (3,3)  {};

  \foreach \from/\to in {m2/m3,m4/m5,m6/m7,m5/m6,m4/m7}
    \draw (\from) -- (\to);
%\draw (n1) to [out=-20,in=35] (n3);
\end{tikzpicture}
\quad\quad\quad
\begin{tikzpicture}
  [scale=.8,auto=left,every node/.style={circle,fill=blue!20}]
  %\node (n6) at (1,10) {6};
  \node (m2) at (2,1)  {};
  \node[fill=blue] (m3) at (1,3)  {};
  \node[fill=blue] (m4) at (3,3)  {};
  \node (m6) at (3,1)  {};
  \node[fill=blue] (m7) at (4,3)  {};
  \node (m8) at (5,1)  {};

  \foreach \from/\to in {m2/m3,m2/m4,m6/m7,m7/m8}
    \draw (\from) -- (\to);
%\draw (n1) to [out=-20,in=35] (n3);
\end{tikzpicture}
\quad\quad\quad
\begin{tikzpicture}
  [scale=.8,auto=left,every node/.style={circle,fill=blue!20}]
  %\node (n6) at (1,10) {6};
  \node (m2) at (1,1)  {};
  \node (m3) at (2,1)  {};
  \node (m4) at (3,1)  {};
  \node[fill=blue] (m5) at (1,3)  {};
  \node[fill=blue] (m6) at (2,3)  {};
  \node[fill=blue] (m7) at (3,3)  {};

  \foreach \from/\to in {m2/m5,m3/m6,m4/m7}
    \draw (\from) -- (\to);
%\draw (n1) to [out=-20,in=35] (n3);
\end{tikzpicture}
\caption{Disconnected bipartite divisor graphs of order six and three primes (part 2)}
\label{fig: 3}
\end{figure}
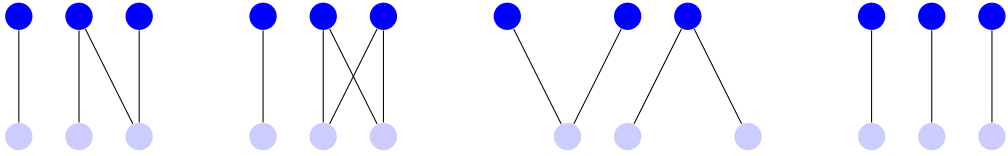
\vskip 0.4 true cm

 %By ~\cite[Lemma 3.5]{ML2}, there exists a positive integer $a$ such that $2^{a}+1\in \cd(G)$. As $G$ has no non-abelian normal Sylow subgroup, ~\cite[Theorem 5.2]{ML2} implies that $\{2^{a}+1\}$ is a connected component of $\Gamma(G)$ and $\pi(2^{a}+1)$ is the larger component of $\Delta(G)$. Considering the case where $B(G)$ is the right graph in Figure ~\ref{fig: 1} results in $2^{a}+1=q^{\alpha}r^{\beta}$, for some positive integers $\alpha$ and $\beta$ and odd primes $q$ and $r$. While $B(G)$ is the left graph in Figure ~\ref{fig: 1}, we have $2^{a}+1=q^{\alpha}$, for some positive integer $\alpha$ and an odd prime $q$. Now Lemma~\ref{lem:40}(1),(2) completes the proof.

Finally in the last theorem of this paper, we look at disconnected bipartite graphs with six vertices, and we attempt to
determine whether each graph can or cannot occur as the bipartite divisor graph of a solvable group.
\begin{theorem}~\label{thm: six}
Let $G$ be a finite group with $B(G)$  disconnected  and with six vertices. Then $B(G)$ is one of the graphs in Figure~$\ref{fig: 2}$ or~$\ref{fig: 3}$. Furthermore we have the following properties:
\begin{itemize}
\item[(i)] $n(B(G))=3$ if and only if $G\cong  A\times \mathrm{PSL}_2(2^{n})$, where $n\in\{2,3\}$ and $A$ is an abelian group.
\item[(ii)] If $|\rho(G)|=4$, then $B(G)$ is the third graph in Figure~$\ref{fig: 2}$, $G$ is solvable and it is of Lewis type one or four with the structure explained in~\cite[Lemma 4.1]{L1998}.% If it is a group of type five, then we have properties of Lemma~\ref{lem:40}(3).
\item[(iii)]If $|\rho(G)|=2$, then $G$ is solvable and $B(G)$ is one the first two graphs in Figure~$\ref{fig: 2}$. If $B(G)$ is the second graph in Figure~$\ref{fig: 2}$, then $G$ is a group of Lewis type one. If $B(G)$ is the first graph in Figure~$\ref{fig: 2}$, then $G$ is of Lewis type one or five. %If it is a group of type five, then we have properties of Lemma~\ref{lem:40}(1).%While $G$ is a group of type five or $G$ is non-solvable, $\Gamma(G)$ has an isolated vertex and $B(G)$ is the first graph in Figure ~\ref{fig: 2}.
\item[(iv)] If $|\rho(G)|=3$ and $n(B(G))=2$, then $B(G)$ is one of the first three graphs in Figure~$\ref{fig: 3}$. If $G$ is non-solvable, then one of the following cases holds:
    \begin{itemize}
 \item[(a)] $G$ has a normal subgroup $U$ such that $U\cong  \mathrm{PSL}_{2}(q)$ or $\mathrm{SL}_{2}(q)$ for some odd $q\geq 5$ and if $C=\cent G U$, then $C\leq \Z G$ and $G/C\cong  \mathrm{PGL}_{2}(q)$; or
     \item[(b)] $G$ has a normal subgroup of index $2$ that is a direct product of $\mathrm{PSL}_{2}(9)$ and a central subgroup $C$. Furthermore, $G/C\cong  M_{10}$.
 \end{itemize}
 If $G$ is solvable, then it is of Lewis type one or six when $B(G)$ is one of the first two graphs in Figure~$\ref{fig: 3}$ and of Lewis type one, four, or five when $B(G)$ is the third graph in Figure~$\ref{fig: 3}$.
\end{itemize}
\end{theorem}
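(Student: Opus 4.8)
The plan is to combine a finite enumeration of the possible shapes of $B(G)$ with the solvable/non-solvable dichotomy, handling each shape with the tools already developed. I would begin with the combinatorial reduction. As $B(G)$ is disconnected, $n(B(G))=n(\Delta(G))$ by~\cite{IP}, and $n(\Delta(G))\le 3$ by~\cite[Theorem~6.4]{L}; since $B(G)$ has no isolated vertex (every prime in $\rho(G)$ divides some degree, and every element of $\cd(G)^*$ is divisible by some prime), each connected component has at least two vertices, so $n(B(G))\in\{2,3\}$, $|\rho(G)|\in\{2,3,4\}$ and $|\cd(G)^*|=6-|\rho(G)|$. A routine inspection of how a $6$-vertex disconnected bipartite graph with the fixed bipartition $\rho(G)\amalg\cd(G)^*$ can decompose (component sizes $2+2+2$, $2+4$ or $3+3$, and the connected bipartite graphs on at most four vertices being only $K_2$, $P_2$, $P_3$, $K_{1,3}$ and $C_4$) leaves exactly the seven graphs of Figures~\ref{fig: 2} and~\ref{fig: 3}, plus one extra candidate, $K_{1,2}+K_{1,2}$ with $|\rho(G)|=4$, that will be excluded later.

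Next I would dispose of the non-solvable groups. Since a solvable group with disconnected $\Delta$ has exactly two components (Remark~\ref{rem:20}), $n(B(G))=3$ forces $G$ non-solvable, and the only $3$-component shape on six vertices is $K_2+K_2+K_2$, the last graph of Figure~\ref{fig: 3}; Theorem~\ref{thm:99}(iv) gives $G\cong A\times\mathrm{PSL}_2(2^{n})$ with $\cd(G)=\{1,2^{n}-1,2^{n},2^{n}+1\}$, and $o(B(G))=6$ then forces $2^{n}-1$ and $2^{n}+1$ both to be prime powers, which by an elementary argument with the factorizations of $2^{n}\pm 1$ happens exactly for $n\in\{2,3\}$. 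This is part~(i). If instead $n(B(G))=2$ and $G$ is non-solvable, then by the classification of non-solvable groups with disconnected degree graph used in the proof of Theorem~\ref{thm:99} one has $|\rho(G)|\ge 3$ and $|\cd(G)|\ge 4$, so $o(B(G))=6$ forces $|\cd(G)|=4$ and $|\rho(G)|=3$; by~\cite[Theorem~A]{GA} one of the two families of Theorem~\ref{thm:99}(i),(ii) occurs, and in each the Steinberg-type degree is a pure prime power (giving a $K_2$-component), so a short computation of the second component, using $\gcd(q-1,q+1)=2$, shows that $B(G)$ is the first graph of Figure~\ref{fig: 3}. This yields the non-solvable alternatives of part~(iv), and shows that in all remaining cases $G$ is solvable.

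The bulk is the case $G$ solvable with $n(B(G))=2$; then $\Delta(G)$ is disconnected and $G$ is one of Lewis' six types. Three of the seven shapes — the second graph of Figure~\ref{fig: 2} (which is the third graph of Figure~\ref{fig: 4}), the first graph of Figure~\ref{fig: 3} (the third of Figure~\ref{fig: 5}) and the third graph of Figure~\ref{fig: 3} (the second of Figure~\ref{fig: 5}) — are unions of paths, so their admissible Lewis types are already recorded in Table~\ref{tab:TTCR} (respectively: type $1$; types $1$ and $6$; types $1$, $4$ and $5$), which gives the corresponding clauses of parts~(iii) and~(iv); moreover the analysis of Section~\ref{unionpaths} shows that no solvable group with $B(G)$ a disconnected union of paths has $|\rho(G)|=4$, which already rules out $K_{1,2}+K_{1,2}$ with $|\rho(G)|=4$. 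For the remaining three shapes — the first and third graphs of Figure~\ref{fig: 2} and the second graph of Figure~\ref{fig: 3} — I would read off $\cd(G)$ up to exponents and eliminate Lewis types directly: Remark~\ref{rem:20} excludes types $2$ and $3$ (their degree sets $\{1,2,3,8\}$ and $\{1,2,3,4,8,16\}$ are incompatible with the prescribed sizes of $\rho(G)$ and $\cd(G)^*$) and, when $\Delta(G)$ consists of two isolated vertices, also type $6$; type $5$ is excluded whenever the only power of $2$ among the character degrees is $2$ itself, since then $\cd(G|Q')$ would force $\cd(G)=\{1,2,3\}$, exactly as in the proofs of Theorems~\ref{thm:99} and~\ref{thm: reg}; and type $4$ is excluded by a Zsigmondy argument along the lines of Lemma~\ref{lemma:new}, exploiting the divisibility $(Q^{m}-1)/(Q-1)\mid |K|$ available in type $4$ (with $m=|E:K|>1$) together with the prime-power nature of $|K|$ and of the further degrees forced by these graphs. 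For $|\rho(G)|=4$ one additionally uses that a finite group with exactly two character degrees has its nontrivial degree a prime power (see~\cite{N,BH}): this forces exactly one of the two coprime degrees to be a prime power, so $B(G)$ is the claw-plus-edge third graph of Figure~\ref{fig: 2}, and combined with Lemma~\ref{lemma:new1} (excluding type $6$ for that graph) and the type-$5$ exclusion above it leaves only Lewis types $1$ and $4$, with the structure of~\cite[Lemma~4.1]{L1998}; this is part~(ii). Assembling these cases yields parts~(ii)--(iv).

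Finally, on difficulty. The enumeration, the non-solvable analysis and the three ``union of paths'' shapes are essentially bookkeeping on top of Theorem~\ref{thm:99}, Table~\ref{tab:TTCR} and Section~\ref{unionpaths}. The main obstacle is the treatment of the three genuinely new shapes — the two claw-plus-edge graphs and the $C_4$-plus-edge graph — where eliminating Lewis types $4$ and $5$ rests on number-theoretic inputs (Zsigmondy's theorem, the prime-power constraint for groups with two character degrees) rather than on graph theory, and where one must carefully reconcile the requirement that $\Delta(G)$ be disconnected (no common prime between the two coprime degrees) with the internal divisibility relations dictated by each Lewis type. A related delicate point is making the combinatorial enumeration fully rigorous, in particular that $K_{1,2}+K_{1,2}$ with $|\rho(G)|=4$ cannot occur — a consequence of the structure theory rather than of counting alone.
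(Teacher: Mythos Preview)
Your overall architecture matches the paper's: split by $|\rho(G)|$, handle non-solvable groups via the Malle--Moret\'o classification~\cite{GA}, and for solvable groups eliminate Lewis types case by case using Remark~\ref{rem:20}, Lemmas~\ref{lemma:new}--\ref{lemma:new1} and Table~\ref{tab:TTCR}. Two points deserve correction.

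First, your justification for $|\rho(G)|=4$ contains a genuine error. You invoke ``a finite group with exactly two character degrees has its nontrivial degree a prime power'' to force one of the two coprime members of $\cd(G)^{*}$ to be a prime power. But here $|\cd(G)|=3$, not $2$, so that result does not apply; groups with $\cd(G)=\{1,m,n\}$ and $\gcd(m,n)=1$ need not have either $m$ or $n$ a prime power. The paper instead uses~\cite[Theorem~4.3]{L}: for a solvable group $\Delta(G)\not\cong K_{2}+K_{2}$, which directly rules out the partition $(2,2)$ of $\rho(G)$ and hence the $K_{1,2}+K_{1,2}$ shape. Your earlier appeal to Section~\ref{unionpaths} does cover this, but only because that section tacitly relies on the same $K_{2}+K_{2}$ obstruction; the argument you actually write down is wrong.

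Second, for the first two graphs of Figure~\ref{fig: 3} (in particular the $K_{2}+C_{4}$ shape) the paper eliminates Lewis types $4$ and $5$ in one stroke via~\cite[Theorem~5.2]{ML2}: in those types the isolated degree in $\Gamma(G)$ must have its prime divisors forming the \emph{larger} component of $\Delta(G)$, which visibly fails here. Your proposed Zsigmondy route for type~$4$ can be made to work, but it splits into subcases (depending on whether $|K|$ or $m$ is the prime power) and is noticeably longer; the paper's use of~\cite[Theorem~5.2]{ML2} is the cleaner tool. Your type~$5$ elimination (``only power of $2$ is $2$ itself forces $\cd(G)=\{1,2,3\}$'') is the same idea the paper uses, though you should state the contradiction as $3\notin\cd(G)$ rather than $|\cd(G)|=3$.
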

\begin{proof}
As $n(B(G))>1$ and $B(G)$ has six vertices, we have $|\rho(G)|+|\cd(G)^{*}|=6$, where $|\rho(G)|\in\{2,3,4\}$. So we consider three cases with respect to $|\rho(G)|$.

If $|\rho(G)|=4$, then $|\cd(G)|=3$. Now ~\cite[Theorem 12.15]{IS} and ~\cite[Corollary 4.2]{L} imply that $G$ is solvable of derived length at most $3$ and each connected component of $\Delta(G)$ is a complete graph. On the other hand, ~\cite[Theorem 4.3]{L} verifies that $K_{2}+K_{2}$ is not the prime degree graph of a solvable group, so we conclude that $B(G)$ is the third graph in Figure ~\ref{fig: 2}. Now by Remark~\ref{rem:20} we conclude that $G$ is of Lewis types one, four, five, or six. Similar to the proof of Lemma~\ref{lemma:new1}, we can see that $G$ is not a group of Lewis type six. If $G$ is of Lewis type five, then considering the notations in ~\cite[Lemma 3.5]{ML2} we conclude that either $2^{a}=2$ and $\cd(G)=\{1,2,3\}$ which contradicts the structure of $B(G)$ or $\cd(G|Q^{'})=\emptyset$ which is not possible as $Q$ is non-abelian. Therefore, $G$ is of Lewis type one or four.

If $|\rho(G)|=2$, then $B(G)$ has two connected components. It is easy to see that $B(G)$ is one of the first two graphs in Figure ~\ref{fig: 2}. First suppose that $B(G)$ is the second graph in Figure ~\ref{fig: 2}. By ~\cite[Theorem 7.1]{L}, the case $\Gamma(G)\cong K_{2}+K_{2}$ is impossible for a non-solvable group, therefore $G$ is solvable and, by Table~\ref{tab:TTCR}, $G$ is of Lewis type one. Now consider the first graph in Figure ~\ref{fig: 2}. As every character degree of $G$ is a power of some prime and $|\rho(G)|=2$, by ~\cite[Theorem 30.3]{Hu1}, we conclude that $G$ is solvable with $\Delta(G)\cong K_{1}+K_{1}$ and, by Remark~\ref{rem:20}, we deduce that $G$ is of Lewis type one, four, or five. Similar to the proof of Lemma~\ref{lemma:new}, we see that the case where $G$ has Lewis type four does not occur. Hence $G$ is either of Lewis type one or five.

Finally consider the case where $|\rho(G)|=|\cd(G)^{*}|=3$. If $n(B(G))=3$, then $B(G)$ is $1$-regular and, by Theorem~\ref{thm: reg} we can observe that $G\cong  A\times \mathrm{PSL}_{2}(2^{n})$, where $n\in\{2,3\}$ and $A$ is an abelian group. Assume that $n(B(G))=2$ and $G$ is non-solvable. Since $|\cd(G)|=4$, ~\cite[Theorem A]{GA} implies that $G$ has one of the following structures:
 \begin{itemize}
 \item[(a)] $G$ has a normal subgroup $U$ such that $U\cong  \mathrm{PSL}_{2}(q)$ or $\mathrm{SL}_{2}(q)$ for some odd $q\geq 5$ and if $C=\cent G U$, then $C\leq \Z G$ and $G/C\cong  \mathrm{PGL}_2(q)$; or
     \item[(b)] $G$ has a normal subgroup of index $2$ that is a direct product of $\mathrm{PSL}_{2}(9)$ and a central subgroup $C$. Furthermore, $G/C\cong  M_{10}$.
 \end{itemize}
  In particular by ~\cite[Corollary B]{GA}, $\cd(G)=\{1,q-1,q,q+1\}$ for some odd prime power $q>3$ or $\cd(G)=\{1,9,10,16\}$. Consequently, $\Delta(G)$ has an isolated vertex and $B(G)$ is one of the first two graphs in Figure~\ref{fig: 3}. When $G$ is solvable, by Remark~\ref{rem:20} we can see that $G$ is a group of Lewis type one, four, five, or six. Suppose $G$ is a group of Lewis type four or five. As $G$ has no non-abelian normal Sylow subgroup, ~\cite[Theorem 5.2]{ML2} implies that one connected component of $\Gamma(G)$ contains only one degree $a$ where the prime divisors of $a$ lie in the larger connected component of $\Delta(G)$. None of the first two graphs in Figure~\ref{fig: 3} satisfies this property, so in these cases $G$ is not a group of Lewis type four or five. Hence it is of Lewis type one or six.

 If $B(G)$ is the third graph in Figure~\ref{fig: 3}, then, by Table~\ref{tab:TTCR}, it is a group of Lewis type one, four, or five.
\end{proof}
\begin{example}{\rm
Here we give some examples of solvable groups whose bipartite divisor graphs have six vertices, are disconnected, but are not a union of paths.
  \begin{itemize}
    \item For $G=\mathtt{SmallGroup}(320,1581)$, we have $\cd(G)=\{1,2,4,8,5\}$, $B(G)$ is the first graph in Figure~\ref{fig: 2}, and $G$ is of Lewis type five.

    \item Let $p$ be an  odd prime and let $a$ be a positive integer with $|\pi(p^{a}-1)|\geq 3$ (the smallest case is $p=31$ and $a=1$). Let $b$ be a divisor of $p^{a}-1$ with exactly three prime divisors. Let $E$ be the extraspecial group of order $p^{2a+1}$ with exponent $p$. Then $E$ has an automorphism $\varphi$ of order $b$ which centralizes $\Z E$. Let $G:=E\rtimes \langle\varphi\rangle$. Then $\cd(G)=\{1,b,p^{a}\}$ and $B(G)$ is the third graph in Figure~\ref{fig: 2}, and $G$ is of Lewis type one.
  \end{itemize}}
\end{example}

We were not able to find a group $G$ with $B(G)$ isomorphic to the second graph in Figure~\ref{fig: 3}. We leave this as an open question.
\begin{question}\label{question:11111}
Is there a finite group $G$ with $B(G)$ isomorphic to the second graph in Figure $\ref{fig: 3}$?
\end{question}
%\section{Bipartite divisor graph for the set of conjugacy class sizes of a finite group}

%\begin{theorem}~\cite[Theorem B]{Z}~\label{thm:40}
%Let $G$ be a finite solvable group with no normal non-abelian Sylow subgroups. If $\Delta(G)$ is non-complete and regular, then $G$ is a direct product of groups having disconnected graph with two vertices.
%\end{theorem}
%Suppose $G$ is a group of type five in the sense of Lewis. As $G$ has no non-abelian normal Sylow subgroups and $\Delta(G)$ is non-complete $0$-regular, by Theorem~\ref{thm:40}, we conclude that $G\cong  H\times K$, where $\Delta(H)$ and $\Delta(K)$ both have disconnected graph with two vertices. Hence $\Delta(G)$ is a square, which is a contradiction.


\begin{thebibliography}{99}
\bibitem{Benjamin} D.~ Benjamin, Coprimeness among irreducible character degrees of finite solvable groups, \textit{Proc. Amer. Math. Soc.} \textbf{125}(10) (1997), 2831--2837.
\bibitem{BH}M.~Bianchi, A.~Berta Mauri Gillio, M.~Herzog, G.~Qian, W.~Shi,
Characterization of non-nilpotent groups with two irreducible character degrees,
\textit{J. Algebra} \textbf{284} (2005), 326-332.

\bibitem{BCLP}M.~Bianchi, D.~Chillag, M.~L.~Lewis, E.~Pacifici, Character degree graphs that are complete graphs, \textit{Proc. Amer. Math. Soc.} \textbf{135} (2007), 671--676.

\bibitem{BJLL} M.~W.~Bissler, J.~Laubacher, M.~L.~Lewis, Classifying character degree graphs with 6 vertices, \textit{Beitr Algebra Geom} (2019), https://doi.org/10.1007/s13366-019-00437-y.

\bibitem{BJL} M.~W.~Bissler, J.~Laubacher, Classifying families of character degree graphs of solvable groups, \textit{Int. J. Group Theory} \textbf{8} (2019), 27--46.

\bibitem{BL} M.~W.~Bissler, M.~L.~Lewis, A family of graphs that cannot occur as character
degree graphs of solvable groups. arXiv:1707.03020, 2017.

\bibitem{BDIP} D.~Bubboloni, S.~Dolfi, M.~A.~Iranmanesh, C.~E.~Praeger, On bipartite divisor graphs for
group conjugacy class sizes, \textit{Journal of Pure and Applied Algebra} \textbf{213} (2009), 1722--1734.

\bibitem{DJ} S.~Dolfi, E.~Jabara, The structure of finite groups of conjugate rank $2$, \textit{Bull. London Math. Soc} \textbf{82} (2010), 167--183.
\bibitem{GAP4}
  The GAP~Group, \emph{GAP -- Groups, Algorithms, and Programming,
  Version 4.10.1};
  2019,
  \verb+(https://www.gap-system.org)+.


\bibitem{H} R.~Hafezieh, Bipartite divisor graph for the set of irreducible character degrees, \textit{Int. J. Group
Theory} \textbf{6} (2017), 41--51.

\bibitem{Hregular} R.~Hafezieh, On regular bipartite divisor graph for the set of irreducible character degrees, \textit{Hacettepe J. Mathematics and Statistics}, Doi: 10.15672/HJMS.2018.594.

\bibitem{HI} R.~Hafezieh, M.~A.~Iranmanesh, Bipartite divisor graph for the product of
subsets of integers, \textit{Bull. Aust. Math. Soc.} \textbf{87} (2013), 288--297.

 \bibitem{HSpiga} R. Hafezieh, P. Spiga, Groups having complete bipartite divisor graphs for their conjugacy class sizes, \textit{Rend. Sem. Mat. Univ. Padova} \textbf{133} (2015), 117--123.

\bibitem{LL} L.~He, M.~L.~Lewis, Common divisor character degree graphs of solvable groups with four vertices, \textit{Comm. Algebra} \textbf{43} (2015), 4916--4922.

\bibitem{LZ}L.~He, G.~Zhu, Nonsolvable subgroups and irreducible character degrees, \textit{J. Algebra} \textbf{372} (2012), 68--84.

\bibitem{IP}M.~A.~Iranmanesh, C.~E.~Praeger, Bipartite divisor graphs for integer subsets, \textit{Graphs and Combinatorics} \textbf{26} (2010), 95--105.

%\bibitem{HHHI} R. Hafezieh, M.A. Hosseinzadeh, S. Hossein-Zadeh, A. Iranmanesh, The influence of cut vertices and eigenvalues on character graphs of solvable groups, \textit{submited}.

\bibitem{Hu1} B. Huppert, Character Theory of Finite Groups, \textit{de Gruyter, Berlin}, 1998.

%\bibitem{Hu} B. Huppert, W. Lempken, Simple groups of order divisible by at most four primes, \textit{Proceeding of the F. Scorina Gemel State University}, \textbf{16}(3) (2000), 64-75.

\bibitem{IS} I.~M. Isaacs, Character theory of finite groups, \textit{Academic Press, New York}, 1976.

\bibitem{L1998} M.~L.~Lewis, Determining group structure from sets of irreducible
character degrees, \emph{Journal of Algebra} \textbf{206} (1998), 235--260.


%\bibitem{kayo} D.M. Kasyoki, P.O. Oleche, On regular prime graphs of solvable groups, \textit{Int. J. Algebra}, \textbf{10}(10) (2016), 491-495.

\bibitem{ML2} M.~L.~Lewis, Solvable groups whose degree graphs have two connected
components, \textit{Journal of Group Theory} \textbf{4} (2001), 255--275.

%\bibitem{ML3} M.L. Lewis, Solvable groups with character degree graphs having $5$ vertices and diameter $3$, \textit{Comm. Algebra}, \textbf{30}(11) (2002), 5485-5503.



\bibitem{L3} M.~L.~Lewis, Classifying character degree graphs with $5$ vertices. In: Finite groups 2003,
247-265, Walter de Gruyter GmbH \& Co. KG, Berlin, 2004.

\bibitem{L} M.~L.~Lewis, An overview of graphs associated with character
degrees and conjugacy class sizes in finite groups, \textit{Rocky Mountain Journal of Math.} \textbf{38} (2008), 175--211.

\bibitem{LM}  M.~L.~Lewis, A.~Moret\'{o}, T.~R.~Wolf, Nondivisibility among character degrees, \textit{J. Group Theory} \textbf{8} (2005), 561--588.

\bibitem{GA} G.~Malle, A.~Moret\'{o}, Nonsolvable groups with few character degrees, \textit{Journal of Algebra} \textbf{294} (2005), 117--126.

%\bibitem{O} O. Manz, T.R. Wolf, Representations of Solvable Groups, \textit{Cambridge Univ.
%Press, Cambridge}, 1993.
\bibitem{mus}S.~A.~Moosavi, On bipartite divisor graph for character degrees, \textit{International Journal of Group Theory} \textbf{6}  (2017), 1--7.

\bibitem{moo4} S.~A.~Moosavi, Groups whose bipartite divisor graph for character degrees
has four or fewer vertices, \textit{Asian-European Journal of Mathematics}, DOI: 10.1142/S1793557120500394.

\bibitem{moo5} S.~A.~Moosavi, Groups whose bipartite graph for character degrees has five vertices, \textit{Iranian Journal of Mathematical Sciences and Informatics}, to appear.

\bibitem{N} T. Noritzsch, Groups having three complex irreducible character degrees, \textit{Journal of Algebra} \textbf{175} (1995), 767--798.

\bibitem{shafiei} F. Shafiei, A. Iranmanesh, M.~L. Lewis, K. Aziziheris, Finite groups with a given set of character degrees, \textit{Algebras and Representation Theory} \textbf{19} (2016), 335--354.

\bibitem{T} B. Taeri, Cycles and bipartite graph on conjugacy class of groups,
\textit{Rend. Sem. Matematico
della Universit\`a di Padova} \textbf{123} (2010), 233--247.

\bibitem{Tong-viet} H. P. Tong-Viet, Groups whose prime graph has no triangles, \textit{Journal of Algebra} \textbf{378} (2013), 196--206.

\bibitem{white}D.~L.~White, Character degrees of extensions of $\mathrm{PSL}_2(q)$ and $\mathrm{SL}_2(q)$, \textit{J. Group Theory} \textbf{16} (2013), 1--33.

%\bibitem{zhang}J.~Zhang, On a problem by Huppert,
%\textit{Beijing Daxue Xuebao Ziran Kexue Ban} \textbf{34} (1998), 143--150.

\bibitem{zsigmondy}K.~Zsigmondy, Zur Theorie der Potenzreste, \textit{Monatsh. Math. Phys.} \textbf{3} (1892),  265--284.



%\bibitem{ZUCC} C.P. Morresi Zuccari, Character degree graphs with no complete vertices, \textit{J. Algebra}, \textbf{353} (2012), 22-30.

%\bibitem{Z} C.P. Morresi Zuccari, Regular character degree graphs, \textit{J. Algebra}, \textbf{411} (2014), 215-224.

%\bibitem{PP} P. P\'{a}lfy, On the character degree graph of solvable groups,
%I: Three primes, \textit{Period. Math. Hungar}, \textbf{36}(1) (1998), 61-65.

%\bibitem{PP2} P. P\'{a}lfy, On the character degree graph of solvable groups,
%II: Disconnected Graphs, \textit{Studia Sci. Math. Hungar}, \textbf{38} (2001), 339-355.

%\bibitem{Biss} C.B. Sass, Character degree graphs of solvable groups with diameter three, \textit{J. Group Theory}, \textbf{19}(6) (2016), 1097-1127.

%\bibitem{Zh} J. Zhang, On a problem by Huppert, \textit{Acta Scientiarum Naturalium Universitatis
%Pekinensis}, \textbf{34} (1998), 143-150.
\end{thebibliography}
\end{document}